\newcommand{\new}{\newcommand*}
\new{\rnew}{\renewcommand*}
\new{\newe}{\newenvironment*}
\new{\newt}{\newtheorem}
\new{\stl}{\setlength}
\new{\bea}{\begin{eqnarray}}
\new{\eea}{\end{eqnarray}}
\new{\be}{\begin{equation}}
\new{\ee}{\end{equation}}
\new{\bean}{\begin{eqnarray*}}
\new{\eean}{\end{eqnarray*}}
\new{\no}{\nonumber}
\new{\bt}{\begin{theorem}}
\new{\et}{\end{theorem}}
\new{\bl}{\begin{lemma}}
\new{\el}{\end{lemma}}
\new{\bc}{\begin{corollary}}
\new{\ec}{\end{corollary}}
\new{\bp}{\begin{proof}\quad}
\new{\ep}{\end{proof}}
\new{\ba}{\begin{array}}
\new{\ea}{\end{array}}
\rnew{\theequation}{\thesection.\arabic{equation}}
\new{\sect}[1]{ \section{#1}
\setcounter{equation}{0} \setcounter{figure}{0} }
\def \endproof {\qquad \vrule height 5pt width 5pt depth 2pt}
\title{The Structure of the Closure of the Rational Functions in $L^{q}$($\mu$)\date{}}
\author{{Zhijian Qiu } \\
\\{\small\em  Department of  mathematics,  Southwestern Univ of Finance and Economics}
\\{\small\em  Chengdu 610072,  China, E-mail:qiu@swufe.edu.cn }}
\begin{document}

\maketitle

\begin{abstract}
Let $K$ be a compact subset in the complex plane and let $A(K)$ be
the uniform closure  of the  functions continuous on $K$ and
analytic on $K^{\circ}$.  Let $\mu$ be a positive finite measure
with its support contained  in $K$.  For $1 \leq q < \infty$,
 let $A^{q}(K,\mu)$ denote the closure of $A(K)$ in $L^{q}(\mu)$.
The aim of this work is to study the structure of the space
$A^{q}(K,\mu)$. We seek a necessary and sufficient condition on $K$
so that a Thomson-type structure theorem for $A^{q}(K,\mu)$ can be
established.  Our theorem deduces J. Thomson's structure theorem for
$P^{q}(\mu)$, the closure of polynomials in $L^{q}(\mu)$, as the
special case when $K$ is a closed disk containing the support of
$\mu$.\footnote{This paper basically gives ultimate solution to the
most important research problem raised in \cite{conw,jm} and it is
also very satisfactory generation of   Thomson's theorem for
polynomials. It is the close-up work of this kind (in an once quite
active research area) even it received no attentions since its
publication in 2007.}
\end{abstract}

%

 \begin{keywords}
\end{keywords}
\begin{AMS}  46E30 30H05, 30E10, 46E15
\end{AMS}

\section*{Introduction}

Let $1 \leq q < \infty$ and let  $\mu$ be a positive finite (regular
Borel) measure with compact support in the complex plane {\bf C}.
Let $K$ be a compact subset that contains the support of $\mu$. The
purpose of this paper is to investigate the following problem:

\begin{quote}

\emph{What is the closure of A(K) in $L^{q}(\mu)$?}

\end{quote}
This is a very difficult question to get a complete answer. For a
given measure $\mu$, the answer depends on $K$. Let $K$ be a  closed
disk that contains  the support of $\mu$.
Then every $f$ in $A(K)$ can be uniformly approximated by
polynomials, and hence $A^{q}(K,\mu) = P^{q}(\mu)$, which is the
closure of polynomials in $L^{q}(\mu)$. In this case, J. Thomson
proved a structure theorem for $P^{q}(\mu)$ in \cite[1991]{jm}.

Roughly speaking, Thomson's theorem says that there exists a Borel
partition $\{\Delta_{n}\}_{n=0}$ of the support of $\mu$ such that
\[P^{q}(\mu) = L^{q}(\mu|\Delta_{0}) \oplus P^{q}(\mu|\Delta_{1})\oplus ...
\oplus P^{q}(\mu|\Delta_{n}) \oplus ...,
\]
where each $P^{q}(\mu|\Delta_{n}) $ is identified with a space
consisting of analytic functions on a simply connected domain
$U_{n}$ via a so-called {\em evalaution map}. The union, $\cup
U_{n}$, is known as the set of analytic bounded point evaluations
($abpe$s) for $P^{q}(\mu)$. In this special case, since $A(K)$ is
the uniform closure of polynomials, Thomson's theorem shows that
$A(K)$ is dense in $ L^{q}(\mu)$ if and only if $P^{q}(\mu)$ ($=
A^{q}(K,\mu)$ has no $abpe$s (this special result answered  an open
question raised by D. Sarason in 1972 \cite{sar}).  
For an arbitrary compact subset $K$, the author shows in \cite{qiu1}
that it is still true: $A(K)$ is dense in $L^{q}(\mu)$ if and only
if $ A^{q}(K,\mu)$ has no $abpe$s.  However, the corresponding
result for $A^{q}(K,\mu)$ is not always possible even the set of
$abpe$s for $A^{q}(K,\mu)$ is not empty.  J. Conway and N. Elias
give an example in \cite{ce} that shows the set of $abpe$s for
$A^{q}(K,\mu)$ is a simply connected domain $U$, but
$A^{q}(K,\mu)\cap L^{\infty}(\mu)$ can not be identified with
$H^{\infty}(U)$ via the evaluation map.

For $R^{q}(K,\mu)$, the closure in $L^{q}(\mu)$ of $R(K)$ (which is
the uniform closure of the rational functions with poles off $K$),
the situation is worse since the set of $abpe$s may be empty even
$R(K)$ is not dense in $L^{q}(\mu)$ (see  \cite{bre1}).

Assuming the existence of $abpe$s for $R^{q}(K,\mu)$, Conway and
Elias
proved a structure-type theorem for $R^{q}(K,\mu)$ under additional
conditions. But  their result does not imply Thomson's theorem.
Then, can we have a structure theorem for $A^{q}(K,\mu)$ or
$R^{q}(K,\mu)$ that is beyond the polynomial case and that also
covers   Thomson's theorem? Prior our work, it was unknown whether
such  a structure theorem for $A^{q}(K,\mu)$ is possible. Thomson
was unable to offer any result for $R^{q}(K,\mu)$ that is beyond
$P^{q}(\mu)$ (that is, the disk case in our setting) (see \cite[p.
505]{jm}).

To tackle the problem, we first need to restrict our effort on those
$K$ such that the components of $K^{\circ}$ are finitely connected.
In fact,  the author shows in \cite[Theorem 2]{comm}   even when $K$
is a simplest kind of infinitely connected domains, such as a
"road-runner", our main theorem (Theorem~\ref{t:main}) for
$A^{q}(K,\mu)$ could fail.

In this paper, we seek a necessary and sufficient condition on $K$
so that a Thomson type of structure theorem holds for
$A^{q}(K,\mu)$.

A domain is called a circular domain if its boundary consists of
finitely many disjoint circles.  We call a domain $U$ multi-nicely
connected if there is a circular domain $W$ and a conformal map
$\alpha$ from $W$ onto $U$ such that $\alpha$ is almost 1-1 on
$\partial W$ with respect to the arclength measure.

Our main theorem, Theorem~\ref{t:main}, extends Thomson's theorem to
$A^{q}(K,\mu)$ in the case when the components of $K^{\circ}$ are
multi-nicely connected and the harmonic measures of the components
of $K^{\circ}$ are mutually singular. We also show that the
condition of Theorem~\ref{t:main} is necessary.

If  every $f$ in $H^{\infty}(K^{\circ})$ a pointwise limit of a
bounded sequence in $A(K)$,
 then $K$ satisfies the condition of Theorem~\ref{t:main}.  In particular,
when $K$ is such that $R(K)$  is a hypo-Dirichlet algebra
\cite{as,ga}, $K$ satisfies the condition of Theorem~\ref{t:main}
(in this case,   $R^{q}(K,\mu)=A^{q}(K,\mu)$). If the complement of
$K$ has only finitely many components (note, $K^{\circ}$ may still
has infinitely many components in this case), then $R(K)$ is a
hypo-Dirichlet algebra and hence $K$ satisfies the hypothesis of
Theorem~\ref{t:main}.  Since a quite large of class of
$R^{q}(K,\mu)$ satisfies our conditions,  Theorem~\ref{t:main} is
also a theorem for the rational $R^{q}(K,\mu)$.

Since  the polynomial case is just the disk case in our setting and
since a general compact subset $K$ is much more complicated than a
disk in nature, one can expect the extension needs much more work.
  To get a structure theorem for $A^{q}(K,\mu)$ (here $K$ is an arbitrary
compact subset), we need more than Thomson's technic and method.
 In fact, we need a new Thomson-type approximation scheme as developed in
 \cite{qiu1} that takes all of what is used in Thomson's paper \cite{jm}.
In addition,  we need what was not involved  in the case of
$P^{q}(\mu)$: we make extensive use of results and technics related
to uniform algebra or rational approximation: such as peak points,
harmonic measures, hypo-Dirichlet algebra, multi-nicely connected
domains, representing measures, pointwise bounded approximation, etc
(these concepts are not needed for $P^{q}(\mu)$).  So,  besides
Thomson's technic and
 method, we need  a significant part of the theories from the uniform algebras
and the rational approximation to get the work done. Combination for
Thomson's technic and uniform rational approximation theory is the
key to prove Theorem~\ref{t:main}. However, not every thing we need
in uniform algebra theory is ready for us. We have to prove some
results in that theory by ourself. In doing so, we first introduce
the concept of multi-nicely connected domains, then we prove
Proposition~\ref{p:ABS2} and an interesting result in uniform
algebra, Lemma~\ref{l:key}, which is crucial for us to prove
Lemma~\ref{l:key2}. That is one of our key lemmas and it is needed
to prove another key lemma, Lemma~\ref{l:scheme}. The rest of paper
is to use these two lemmas and results in \cite{qiu1} and
\cite{comm} to prove the main theorem and extend those lemmas and
results that were proved for the polynomial case in \cite{jm}.
 So far, we are unable to offer any other proof that is
less involved with the theory of uniform algebra. Actually, due to
the nature of this problem, we believe the rational approximation
theory is the right tool in study this type of problems.

 Now we would like to point out the relation between this paper and
some other related papers.
This paper is the sequel of the author's work \cite{qiu1, comm}.
 Thomson's paper consist of two parts of important results.
 One is to give a sufficient and necessary condition on when $\nabla P^{q}(\mu)$
 is not empty and another one is to have a structure theorem for $P^{q}(\mu)$.
 In \cite{qiu1}, we only study the problem of when
$\nabla A^{q}(K,\mu)$ ($\nabla R^{q}(K,\mu)$) is not empty and we
show it is empty if and only if $A(K)$ is dense in $L^{q}(\mu)$. In
\cite{comm}, our effort was primarily to establish the result  that
is a part of 4) and 5) of Theorem~\ref{t:main} in this paper and to
solve a problem in \cite{ce}. In this paper, our effort is to
establish a full version of Thomson's theorem for $A^{q}(K,\mu)$
($R^{q}(K,\mu)$) and this paper is based on \cite{qiu1, comm}.
The readers may notice that our theorem (Theorem~\ref{t:main}) not
only completely covers Thomson's theorem, it also has more important
consequences,  such as 4) and 5) (which are not in \cite{jm} and are
important facts. One needs to have them when applying it to operator
theory. For example, see \cite{equi} and \cite{quas}).  In
\cite{ce}, Conway and Elias studied the same problem as that in this
paper. However, since  their theorem is based on the assumption that
$K$ is the closure of $\nabla R^{q}(K,\mu)$
and $R^{q}(K,\mu)$ is pure, so their result does not cover Thomson's
theorem.  For a given measure $\mu$, one can not tell when their
conditions are satisfied. In contrast,
  our paper deals with arbitrary measures just as  \cite{jm} does.

In 1972, D. Sarason \cite{sar} established a structure theorem for
$P^{\infty}(\mu)$, the weak star closure of polynomials in
$L^{\infty}(\mu)$, which has a similar form to that of our theorem.

\section{Preliminaries}

For  a compact subset $K$ in the complex plane {\bf C}. Let $C(K)$
denote the algebra of continuous functions on $K$. For an open
subset $G$ in the sphere {\bf C}$_{\infty}$ whose boundary does not
contains $\infty$, let $A(G)$ be the closed subalgebra of
$C(\overline{G})$ that consists of functions continuous on
$\overline{G}$ and analytic on $G$. Notice that $A(\overline
{\Omega}) \neq A(\Omega)$ in general.

A point $w$ in {\bf C} is called an analytic bounded point
evaluation ($abpe$) for $A^{q}(K,\mu)$ if there is a neighborhood
$G$ of $w$ and $c > 0$ such that for all $\lambda\in G$
\[|f(\lambda)|\leq c \hspace{.05in} \|f\|_{L^{q}(\mu)}\hspace{.05in}
\mbox{for all} \hspace{.05in} f\in A(K).
\]
So the map, $f \rightarrow f(\lambda)$, extends to a functional in
$A^{q}(K,\mu)^{*}$. Thus, there is a (kernel) function $k_{\lambda}$
 in $A^{q}(K,\mu)^{*}$ such that $f(\lambda) = \int f k_{\lambda}
\hspace{.05in}d\mu,\hspace{.05in} f \in A(K)$. Clearly, the set of
$abpe$s is open.  For each $f \in A^{q}(K,\mu)$, let
$\hat{f}(\lambda) =
 \int f k_{\lambda} \hspace{.05in}d\mu$. Then $\hat{f}(\lambda)$ is analytic
on the set of $abpe$s. The $abpe$s for $R^{q}(K,\mu)$ are define
similarly.

We shall use $\nabla A^{q}(K,\mu)$ to denote the set of $abpe$s for
$A^{q}(K,\mu)$. The following is one of the main results in
\cite{qiu1} which our main theorem relies on:

\begin{thm} \label{t:approx}
Let $K$ be compact subset of {\bf C} and let $\mu$ be a positive
finite measure supported on $K$. Then $A(K)$ is dense in
$L^{q}(\mu)$ if and only if\hspace{.05in}
 $\nabla A^{q}(K,\mu) = \emptyset$.
\end{thm}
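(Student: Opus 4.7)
The plan is to prove the two implications separately. The forward direction is a short functional-analytic argument, while the reverse is the substantive half and demands an adaptation of Thomson's dyadic approximation scheme to the algebra $A(K)$.

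For the forward direction, I would argue by contradiction. Suppose $A(K)$ is dense in $L^{q}(\mu)$ but $\nabla A^{q}(K,\mu)\neq\emptyset$. Since this set is open and the finite measure $\mu$ has at most countably many atoms, I may choose an abpe $w$ with $\mu(\{w\})=0$. Because $z-w$ is bounded on $K$, multiplication by $z-w$ is continuous on $L^{q}(\mu)$, so density of $A(K)$ in $L^{q}(\mu)$ gives density of $(z-w)A(K)$ in $(z-w)L^{q}(\mu)$. Using $\mu(\{w\})=0$, truncation of any $h\in L^{q}(\mu)$ outside shrinking neighbourhoods of $w$ shows that $(z-w)L^{q}(\mu)$ is itself dense in $L^{q}(\mu)$. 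Hence $(z-w)A(K)$ is dense in $L^{q}(\mu)$, so I can pick $h_{n}\in A(K)$ with $1+(z-w)h_{n}\to 0$ in $L^{q}(\mu)$. Each $g_{n}:=1+(z-w)h_{n}$ lies in $A(K)$ and, by linearity of the analytic extension $\widehat{\phantom{f}}$ and the identity $\widehat{(z-w)h_{n}}(w)=0$, satisfies $\hat{g}_{n}(w)=1$; this contradicts the abpe bound $|\hat{g}_{n}(w)|\leq c\|g_{n}\|_{L^{q}(\mu)}\to 0$.

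For the reverse direction, assume $\nabla A^{q}(K,\mu)=\emptyset$ and suppose, for contradiction, that $A(K)$ is not dense in $L^{q}(\mu)$. Hahn--Banach produces $g\in L^{q'}(\mu)$, $g\neq 0$, annihilating $A(K)$: $\int fg\, d\mu=0$ for all $f\in A(K)$. Since $w\mapsto 1/(w-z)\in A(K)$ for any $z\notin K$, the Cauchy transform $G(z)=\int(w-z)^{-1}g(w)\, d\mu(w)$ vanishes identically off $K$. The task is now to upgrade the nonvanishing of $g$ into the existence of an abpe, which will contradict the hypothesis. The mechanism is a Thomson-type dyadic scheme: fix a grid of dyadic squares of side $2^{-n}$ and, at each scale, record both the local $L^{q'}$-mass of $g$ on each square and the ability to construct ``building block'' functions in $A(K)$ that are close to $1$ at a chosen centre and small in $L^{q}(\mu)$ away from it. Iteratively composing such blocks across finer and finer scales produces a sequence $\{f_{n}\}\subset A(K)$ and a point $\lambda$ with $|f_{n}(\lambda)|$ bounded away from $0$ while $\|f_{n}\|_{L^{q}(\mu)}$ stays controlled, exhibiting $\lambda$ as an abpe.

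The main obstacle is the construction of these localized $A(K)$-building blocks on an arbitrary compact $K$: the free polynomial manipulations of \cite{jm} are unavailable, and one must replace them by a Vitushkin-style localization together with uniform rational approximation on pieces of $K$. This is precisely the new Thomson-type approximation scheme the author develops in \cite{qiu1}, and it is where the genuine new content of the proof resides. Granting that scheme, the dyadic iteration proceeds in parallel with the polynomial case of \cite{jm} and closes the contradiction.
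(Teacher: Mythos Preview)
The paper does not actually prove Theorem~\ref{t:approx}: it is quoted verbatim as ``one of the main results in \cite{qiu1}'' and used as a black box throughout. So there is no in-paper proof to compare your proposal against.

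That said, your proposal is accurate in spirit and matches what \cite{qiu1} does. Your forward direction is a clean, correct, self-contained argument (the standard one: produce $g_n\in A(K)$ with $g_n(w)=1$ but $\|g_n\|_{L^q(\mu)}\to 0$). For the reverse direction you correctly identify that the real work is a Thomson-type dyadic scheme adapted from polynomials to $A(K)$, and you correctly locate that machinery in \cite{qiu1}. Your sketch of the contradiction via an annihilating $g\in L^{q'}(\mu)$ and its Cauchy transform is the right starting point; the actual construction of the abpe proceeds through the ``light/heavy point'' dichotomy and the colored-square scheme summarized later in this paper (Lemma~\ref{l:cru}, Theorem~\ref{t:light}) rather than through a direct ``building block'' picture, but that is a matter of presentation. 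There is no genuine gap in your outline beyond the acknowledged one, namely that the Vitushkin-localization step replacing Thomson's polynomial constructions is the substantive content of \cite{qiu1} and cannot be filled in here.
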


For the case when $K$ is the polynomially convex hull of the support
of $\mu$, the above theorem is a consequence of Thomson's theorem.
However, since it was known long before Thomson's paper \cite{jm}
that there is a compact $K$ and a measure $\mu$ on $K$ such that
$R(K)$ is not dense in $L^{q}(\mu)$ but $R^{q}(K,\mu)$ has no
$abpe$, and since \[ P^{q}(\mu) \subset R^{q}(K,\mu)
 \subset A^{q}(K,\mu)\] always holds, Theorem 1.1 was unexpected before
\cite{qiu1}.  Somehow, it was a surprise that the theorem is true
for the spaces on the both sides of the inequality above, but fails
for the spaces between.

\vspace{.05in} \noindent{\em Nicely connected domains}. Following
Glicksburg \cite{glic}, we call a domain $\Omega$  nicely connected
if it is multi-nicely connected and if it is simply connected.

\vspace{.05in} \noindent{\em Harmonic measures.} Let $\Omega$ be a
domain in the extended plane  {\bf C}$_{\infty}$ such that it is
solvable for the Dirichlet problem and $\infty$ is not in
$\partial\Omega$. For $u \in C(\partial \Omega)$, let $\hat{u} =$
sup $\{f: f$ is subharmonic on $\Omega$ and $\limsup_{z\rightarrow
a} f(z) \leq u(a), a \in \partial \Omega\}$. The function $\hat{u}$
turns out to be harmonic on $\Omega$ and continuous on
$\overline{\Omega}$, and the map $u\rightarrow \hat{u}(z)$ defines a
positive linear functional on $C(\partial \Omega)$ with norm one, so
the {\em Riesz representing theorem} implies that there is a
probability measure $\omega_{z}$ on $\partial \Omega$ such that
\[  \hat{u}(z) = \int _{\partial \Omega} u d \omega_{z}, \hspace{.2in}
u \in C(\partial \Omega).
\]
The measure $\omega_{z}$ is called the harmonic measure of $\Omega$
evaluated at $z$.  The harmonic measures evaluated at two different
points are boundedly equivalent.   We shall use $\omega_{\Omega}$ to
denote a harmonic measure of $\Omega$.

\vspace{.05in} \noindent{\em Hypodirichlet algebras}. A closed
subalgebra $\cal B$ of $C(K)$ is said to be a hypo-Dirichlet
algebra, if the uniform closure of $ Re(\cal B)= \{ Re(f): f \in
\cal B\}$
 has finite codimension in $C_{R}(K) = \{f: f\in C(K)$ and $f$ is real$\}$
and the linear span of log$|\cal B^{-1}|$ is uniformly dense in
$C_{R}(K)$, where $\cal B^{-1}$ is the subset in $\cal B$ consisting
of invertible elements. A function algebra $\cal B$ is called a
Dirichlet algebra if $Re(B)$ is uniformly dense in $C_{R}(K)$.
Clearly, a Dirichlet algebra is also  a hypo-Dirichlet algebra. An
good example is that $R(K)$ is hypo-Dirichlet if {\bf C} $\setminus
K$ has only finitely many components.  This covers a large class of
domains that have been studied.  If $R(K)$ is a hypo-Dirichlet
algebra, then $A(K)=R(K)$ \cite[p. 116]{ga}.

\vspace{.05in} \noindent{\em Peak points}.  A point $a\in K$ is a
peak point for a function algebra $\cal B \subset C(K)$ if there is
a function in $\cal B$ such that $f(a) = 1$ at $z = a$
and $|f(z)| < 1$ for $z \neq a$.

\vspace{.05in} \noindent{\em Pure and irreducible spaces.} The space
$A^{q}(K,\mu)$ is called {\em pure} if there is no Borel subset
 $\Delta$ of $supp(\mu)$ such that the restriction of  $A(K)$ on $\Delta$
is dense in $L^{q}(\mu|\Delta)$. An observation is that for any
$A^{q}(K,\mu)$, there is a Berel partition $\{\Delta_{0},
\Delta_{1}\}$ of the support of $\mu$
 such that  $A^{q}(K,\mu|\Delta_{1})$ is pure and
\[ A^{q}(K,\mu) = L^{q}(\mu|\Delta_{0}) \oplus A^{q}(K,\mu|\Delta_{1}).
\]
The space $A^{q}(K,\mu)$ is said to be {\em irreducible} if it
contains no nontrivial characteristic functions.  So an irreducible
space must be pure.

\vspace{.05in} \noindent{\em Nontangential limits}. Let $G$ be a
bounded domain that is conformally equivalent to a circular domain
$W$ in the plane and  let $u$ be a conformal map from $W$ onto
$\Omega$.  Then $u$ has  well-defined boundary values on $\partial
W$, which are equal to the nontangential limits of $u$ for almost
every point on $\partial W$ with respect to $\omega_{W}$ (the
harmonic measure of $W$). We still use $u$ to denote the boundary
value function.

Now,   if $E$ is a Borel subset of $\partial W$ such that $u$ is 1-1
on $E$ {\em a.e.} $ [\omega_{W}]$,
  then each $f\in H^{\infty}(G)$ has
nontangential limits almost everywhere on $u(E) $
 with respect to $\omega_{G}$. That is,
\[ f(a) = \lim_{z\rightarrow u^{-1}(a)} f\circ u(z) \hspace{.05in}
\mbox{{\em a.e.} on} \hspace{.05in} u(E)  \hspace{.05in}\mbox{ with
respect to }\hspace{.05in}  \omega_{G}.
\]
So, if $\mu$ is such that $\mu \ll \omega_{G}$ on on $u(E)$, then
each $f\in H^{\infty}(G)$ has nontangential limits on $u(E)$ almost
everywhere with respect to $\mu$.

\section{The Main Result}
In this section, we introduce our main result, Theorem~\ref{t:main}.
Recall that the connectivity of a finitely connected domain is
defined to be the number of the components of its complement.

\begin{thm} \label{t:main}
Let $K$ be a compact subset and let  $\mu$ be a finite positive
measure supported on $K$.  If each of the components of $K^{\circ}$
is multi-nicely connected and the harmonic measures of the
components of $K^{\circ}$ are mutually singular, then there exists a
$Borel$ partition $\{ \Delta_{n}\}_{n=0}^{\infty} $ of $supp(\mu)$
such that
 \[ A^{q}(K,\mu) = L^{q}(\mu | \Delta_{0})  \oplus A^{q}(K, \mu | \Delta_{1})
\oplus ... \oplus A^{q}(K, \mu | \Delta_{n}) \oplus ...
\]
and for each $n \geq 1$, if $U_{n} $ denotes $ \nabla A^{q}(K,
\mu|\Delta_{n})$,  then

1) $\overline{U}_{n} \supset  \Delta_{n} $ and
 $A^{q}(K, \mu | \Delta_{n}) = A^{q}(\overline{U}_{n}, \mu | \Delta_{n})$;

2) each  $U_{n}$ is a finitely connected domain that is conformally
equivalent to a circular domain $W_{n}$; the connectivity of $U_{n}$
does not exceed the connectivity of the component of $K^{\circ}$
that contains $U_{n}$;

3) the map $e$, defined by $e(f)=\hat{f}$, is an isometrical
isomorphism and a weak-star homeomorphism from
 $A^{q}(K_{n}, \mu|\Delta_{n}) \bigcap L^{\infty}(\mu|\Delta_{n})$
onto $H^{\infty}(U_{n})$;

4) $\mu| \partial U_{n} \ll \omega_{U_{n}}$; 
and if $u_{n}$ is a conformal map from  $W_{n}$ onto $U_{n}$, then
for each $f\in H^{\infty}(U_{n})$ has nontangential limits on
$\partial U_{n}$ {\em a.e.} $[\mu] $ and
\[ e^{-1}(f)(a) = \lim_{z\rightarrow u_{n}^{-1}(a)} f\circ u_{n}(z) \hspace{.05in}
\mbox{\em{a.e.} on} \hspace{.05in}\partial U_{n}
\hspace{.05in}\mbox{ with respect to }\hspace{.05in}  \mu|\partial
U_{n};
\]

5)         for  each $f\in H^{\infty}(U_{n})$, if let $f^{*}$  be
equal to its nontangential limit values on $\partial U_{n}$  and let
$f^{*} = \hat{f}$ on $U_{n}$, then the map $m$, defined by $m(f) =
f^{*}|\Delta_{n}$, is the inverse of the map $e$.

\end{thm}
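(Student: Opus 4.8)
The plan is to follow Thomson's strategy for $P^{q}(\mu)$ but replacing the polynomial-specific steps with the uniform-algebra machinery announced in the introduction. First I would fix a component $G$ of $K^{\circ}$ and study the ``local'' space $A^{q}(\overline{G},\mu|_{G\cup\partial G})$. The hypothesis that $G$ is multi-nicely connected gives a circular domain $W$ and a conformal map $\alpha:W\to G$ that is almost $1$--$1$ on $\partial W$ with respect to arclength; via $\alpha$ one transfers questions about $H^{\infty}(G)$ to $H^{\infty}(W)$, where the classical Hardy-space theory (nontangential limits, the F.~and M.~Riesz phenomenon, inner--outer factorization) is available. The mutual singularity of the harmonic measures $\omega_{G}$ over the various components is what lets these local analyses be patched together without interference: a bounded point evaluation sitting in one component cannot ``see'' mass carried by another. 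I would then invoke Theorem~\ref{t:approx} to get the coarse dichotomy --- on the part of $\mathrm{supp}(\mu)$ where $A(K)$ is dense in $L^{q}$ we set $\Delta_{0}$, and on the complement $A^{q}(K,\mu)$ is pure --- and further decompose the pure part into irreducible summands using the characteristic-function observation from Section~1; each irreducible summand will become one $\Delta_{n}$.

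For a single irreducible summand $A^{q}(K,\mu|_{\Delta_{n}})$, the heart of the matter is to show $U_{n}:=\nabla A^{q}(K,\mu|_{\Delta_{n}})$ is a (finitely connected) domain with $\overline{U}_{n}\supset\Delta_{n}$ and to build the evaluation isometry $e$ of part~3). Irreducibility forces $U_{n}$ to be connected; finite connectivity of the ambient component of $K^{\circ}$, together with the fact that abpe's cannot escape that component, gives the connectivity bound in 2); and the conformal equivalence of $U_{n}$ to a circular domain $W_{n}$ follows because a finitely connected domain with no degenerate boundary components is conformally a circular domain --- degeneracy being ruled out by the point-evaluation bound. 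The map $e(f)=\hat f$ is contractive by definition of abpe; that it is \emph{isometric} onto $H^{\infty}(U_{n})$ is the Thomson-type argument, and here is where I would use Lemma~\ref{l:scheme} (the new approximation scheme) together with Lemma~\ref{l:key2}: one produces, for each $h\in H^{\infty}(U_{n})$ of norm $\le 1$, a bounded sequence in $A(K)$ converging to $h$ in $L^{q}(\mu|_{\Delta_{n}})$ and pointwise on $U_{n}$, using pointwise-bounded rational approximation on the hypo-Dirichlet-type pieces and Thomson's ``adding one abpe at a time'' colouring scheme on the rest. Weak-star homeomorphism then comes from a routine boundedness/normality argument once the algebraic isomorphism is in place.

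Finally, parts 4) and 5) are the boundary-behaviour refinements. Once $e$ is established, $\mu|_{\partial U_{n}}\ll\omega_{U_{n}}$ follows from an F.~and M.~Riesz argument: the annihilator of $A^{q}$ restricted to $\partial U_{n}$ must be absolutely continuous with respect to harmonic measure, else one could split off a characteristic function, contradicting irreducibility. Transporting to $W_{n}$ via the conformal map $u_{n}$ and using the almost $1$--$1$ property on $\partial W_{n}$, the nontangential-limit statement in the Preliminaries applies verbatim, so every $h\in H^{\infty}(U_{n})$ has nontangential limits $[\mu]$-a.e.\ on $\partial U_{n}$ and $e^{-1}(h)$ agrees with them; defining $f^{*}$ to be $\hat f$ inside and the nontangential limit on the boundary makes $m(f)=f^{*}|_{\Delta_{n}}$ a well-defined inverse to $e$, giving 5).

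The main obstacle I expect is the isometry (surjectivity) half of part~3): it is precisely the point where Thomson's polynomial machinery does not transfer directly, because $A(K)$ need not be generated by any single function with nice modulus, and one must simultaneously control approximation on the boundary pieces (where pointwise-bounded rational approximation and the hypo-Dirichlet structure enter) and inside the domain (where Thomson's colouring scheme, now in the multiply connected setting of Lemma~\ref{l:scheme}, is needed). Coordinating these two mechanisms --- i.e.\ proving Lemma~\ref{l:key2} and then Lemma~\ref{l:scheme} --- is the technical core, and everything else is either classical Hardy-space theory on circular domains or the bookkeeping already carried out for $P^{q}(\mu)$ in \cite{jm}.
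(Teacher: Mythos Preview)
Your outline assembles the right ingredients but inverts the logical order at the decisive step, creating a circularity. You propose to ``further decompose the pure part into irreducible summands using the characteristic-function observation from Section~1,'' and only \emph{afterwards} analyze each irreducible piece via Lemma~\ref{l:scheme}. But Section~1 contains no such decomposition: it only splits off the $L^{q}$ summand to reach a pure space. That a pure $A^{q}(K,\tau)$ breaks into countably many irreducible pieces is not bookkeeping --- it is precisely the content of the theorem. Producing the nontrivial idempotents $\chi_{\Delta_{n}}\in A^{q}(K,\tau)$ is what has to be \emph{proved}, and there is no general mechanism (Zorn, exhaustion, or otherwise) that delivers a countable irreducible decomposition of a pure mean-closure space a~priori.

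The paper runs the argument in the opposite direction. One stays with the whole pure space $A^{q}(K,\tau)$, lists the components $U_{n}$ of $\nabla A^{q}(K,\tau)$, and applies Lemma~\ref{l:scheme} \emph{there} (not on an already-isolated $\mu|_{\Delta_{n}}$) to manufacture $f_{n}\in A^{q}(K,\tau)\cap L^{\infty}(\tau)$ with $\hat f_{n}=\chi_{U_{n}}$ and $f_{n}=0$ off $\overline{U}_{n}$. The ingredient missing entirely from your sketch is Lemma~\ref{l:unique}: this uniqueness result (which needs only purity, not irreducibility) forces $f_{n}f_{m}=0$ and $f_{n}^{2}=f_{n}$, hence $f_{n}=\chi_{\Delta_{n}}$ for disjoint Borel sets, and a further application of Theorem~\ref{t:approx} shows $\sum\chi_{\Delta_{n}}=1$ a.e.~$[\tau]$. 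Only after the partition is thus \emph{constructed} does one check that each $A^{q}(K,\mu|\Delta_{n})$ is irreducible and that $U_{n}=\nabla A^{q}(K,\mu|\Delta_{n})$, and then proceed to 1)--5). Your description of Lemma~\ref{l:scheme} as yielding approximants ``in $L^{q}(\mu|_{\Delta_{n}})$'' presupposes the $\Delta_{n}$ that the lemma is used to create; and Lemma~\ref{l:unique} is also what supplies the injectivity half of the evaluation map in~3), which your outline does not address. Once the decomposition is in hand, your treatment of 2), 4), and 5) is broadly in line with the paper's (these are handled by Lemma~\ref{l:finite} and Propositions~\ref{p:abs}--\ref{p:nont}).
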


\begin{rem}
Thomson proved 1), 2) and 3) of Theorem~\ref{t:main} in the case
when $A^{q}(K,\mu)=P^{q}(\mu)$.  For the polynomial case, 4) is the
main result in \cite{oy}.  The author proved 4)  for $A^{q}(K,\mu)$
with a different method in \cite{comm}.
\end{rem}

\begin{rem}
3) clearly implies that each $A^{q}(K,\mu_{n})$ is irreducible.
\end{rem}

\begin{rem}
The condition on $K$ is the best possible one.  What we mean here is
that in order to have Theorem~\ref{t:main} holds for any positive
finite measure supported on $K$, it is necessary and sufficient that
each component of $K^{\circ}$ is multi-nicely connected and the
harmonic measures of the components of $K^{\circ}$ are mutually
singular.
\end{rem}

Now we outline a proof for this fact. Let $\Omega$ be a component of
$K^{\circ}$ and let $\mu$ be a harmonic measure for $\Omega$. By
Theorem 3 of \cite{comm}, the map $f\rightarrow \hat{f}$, from
 $A^{q}(\overline{\Omega}, \mu) \bigcap L^{\infty}(\mu)$ onto
$H^{\infty}(\Omega)$ is surjective if and only if  $\Omega$ is a
multi-nicely connected domain. Hence we know that $\Omega$ must be
multi-nicely connected.

Now let $\Omega_{1}$ and $\Omega_{2}$ be two components of
$K^{\circ}$. We want to show that $\omega_{\Omega_{1}}$ and
$\omega_{\Omega_{2}}$ are mutually singular.  Set
$\mu=\omega_{\Omega_{1}} +\omega_{\Omega_{2}}$. Then it is easy to
see that $A^{q}(K,\mu)$ is pure. Since the harmonic measure at a
given point is a representing measure, it follows by the definition
of $abpe$ and the Harnack's inequality that $ \nabla A^{q}(K,\mu)
\supset \Omega_{1} \cup \Omega_{2}. $ Since each $\Omega_{i}$ has no
boundary slit, it follows  clearly that $\nabla A^{q}(K,\mu) =
\Omega_{1} \cup \Omega_{2}.$ If Theorem~\ref{t:main}  holds for
$A^{q}(K,\mu)$, then
\[ A^{q}(K,\mu) = A^{q}(\overline{\Omega}_{1},\mu_{1})  \oplus
A^{q}(\overline{\Omega}_{2},\mu_{2}),
\]
where $\mu_{i}$, $i=1, 2$, are as in Theorem~\ref{t:main}. Let $v$
be a conformal map of $\Omega_{1}$ onto a circular domain $W$.
Theorem~\ref{t:main} implies that there exists $v_{e}\in
A^{q}(\overline{\Omega}_{1}, \mu_{1})$ such that $\hat{v}_{e} = v$.
Set $\eta = \mu_{1} \circ v_{e}^{-1}$. According to  Lemma 2 in
\cite{comm}, $\eta$ is a measure on $\partial W$ such
 that $A^{q}(\overline {W},\eta)$ is irreducible and
$\nabla A^{q}(\overline{W},\eta) = W$. Moreover, $\eta \ll
\omega_{W}$ by Lemma 3 of \cite{comm}. On the other hand, since $W$
is circular and since $\nabla A^{q}(\overline{W},\eta) = W$, it is
easy to see that $A^{\infty}(\overline{W}, \eta)$, the weak-star
closure of $A(\overline{W})$ in $L^{\infty}(\eta)$, is equal to
$\widetilde H^{\infty}(W)$, which is the image of the map
$f\rightarrow \tilde f$ from $H^{\infty}(W)$ into
$L^{q}(\omega_{W})$ (where $\tilde f$ is the boundary value function
of $f$ on $\partial W$). Since the support of
$\mu_{1}\subset 
\partial \Omega_{1}$,  it follows by a classical result that
[$\omega_{W}] = [\eta]$.  Now, applying Lemma 3 in \cite{comm}, we
conclude that $[\omega_{\Omega_{1}}]=[\mu_{1}]$. Similarly, we have
$[\mu_{2}]=[\omega_{\Omega_{2}}]$. But $\mu_{1}$ and $\mu_{2}$ are
mutually singular, therefore $ \omega_{\Omega_{1}}$ and $
\omega_{\Omega_{2}}$ must be mutually singular.

\section{The proof  of the main result}
\begin{lem}
For each $f \in A^{q}(K,\mu),\hspace{.03in} \hat{f}=f$ on $\nabla
A^{q}(K,\mu)$ a.e. $[\mu]$.
\end{lem}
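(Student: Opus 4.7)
The claim has two aspects: $\hat f$ is already defined as an analytic function on the open set $\nabla A^q(K,\mu)$ through the kernel functionals $k_\lambda$, while $f$ is an $L^q(\mu)$-equivalence class; we must show that on the overlap $\nabla A^q(K,\mu)\cap\operatorname{supp}(\mu)$ these two representatives agree $\mu$-almost everywhere. My plan is to reduce from $A^q(K,\mu)$ back to $A(K)$ by norm approximation, then compare pointwise limits.

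First I would observe the ``trivial'' case: if $f\in A(K)$, then at any $\lambda\in\nabla A^q(K,\mu)$ the very definition of $k_\lambda$ gives $\hat f(\lambda)=\int f\,k_\lambda\,d\mu=f(\lambda)$, so the two functions literally coincide on $\nabla A^q(K,\mu)$, with no measure-theoretic qualification. Next, for a general $f\in A^q(K,\mu)$, pick a sequence $\{f_n\}\subset A(K)$ with $\|f_n-f\|_{L^q(\mu)}\to 0$, and, by standard $L^q$ theory, pass to a subsequence so that $f_n\to f$ pointwise $\mu$-a.e.

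The key estimate is the local boundedness of the kernel. By the definition of an $abpe$, every point $w\in\nabla A^q(K,\mu)$ has a neighborhood $G$ and a constant $c$ with $|g(\lambda)|\le c\,\|g\|_{L^q(\mu)}$ for all $g\in A(K)$ and $\lambda\in G$. Extending the functional $g\mapsto\int g\,k_\lambda\,d\mu$ by density yields $|\hat g(\lambda)|\le c\,\|g\|_{L^q(\mu)}$ for every $g\in A^q(K,\mu)$ and $\lambda\in G$. Applying this to $g=f_n-f$ shows $\hat f_n\to\hat f$ uniformly on $G$, hence pointwise on all of $\nabla A^q(K,\mu)$.

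Combining these facts: on the set where $f_n\to f$ pointwise (which has full $\mu$-measure), if $\lambda$ also lies in $\nabla A^q(K,\mu)$, then $\hat f_n(\lambda)=f_n(\lambda)\to f(\lambda)$ by the first step, while $\hat f_n(\lambda)\to\hat f(\lambda)$ by the previous paragraph, forcing $\hat f(\lambda)=f(\lambda)$. This gives $\hat f=f$ on $\nabla A^q(K,\mu)$ a.e.\ $[\mu]$. I do not expect a real obstacle; the only point requiring care is confirming that the locally uniform convergence of $\hat f_n$ to $\hat f$ really does follow from a single estimate on $\|k_\lambda\|$ near each $abpe$, which is immediate from the definition.
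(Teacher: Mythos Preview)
Your proof is correct and follows essentially the same approach as the paper: approximate $f$ by $f_n\in A(K)$ in $L^q(\mu)$, use the $abpe$ estimate to get $\hat f_n\to\hat f$ locally uniformly on $\nabla A^q(K,\mu)$, pass to a subsequence converging $\mu$-a.e., and compare limits using $\hat f_n=f_n$ on $\nabla A^q(K,\mu)$. Your write-up is in fact more explicit than the paper's about why $\hat f_n=f_n$ and why the local uniform convergence holds, but the underlying argument is identical.
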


\begin{proof}
Let $ a$ be an $abpe$ and  choose a sequence $\{f_{n}\} $ in $A(K)$
such that $f_{n}\rightarrow f $ in $L^{q}(\mu)$. Since
$f_{n}\rightarrow \hat{f}$ uniformly in a neighborhood of $a$, it
follows (by passing a sequence if necessary) that $f_{n}\rightarrow
f$ {\em a.e.} $[\mu]$ and consequently
\[ \hat{f}(a) = \lim_{n\rightarrow \infty} \hat{f}_{n}
= \lim_{n\rightarrow \infty} f_{n} = f(a) \hspace{.07in}a.e.
\hspace{.07in} [\mu].
\]

\end{proof}

The following lemma is elementary too.

\begin{lem} \label{l:elem1}
If $f\in L^{\infty}(\mu) \cap A^{q}(K,\mu)$ and $g\in A^{q}(K,\mu)$,
then $fg \in A^{q}(K,\mu)$ and $\widehat{fg} = \hat{f}\hat{g}$.
\end{lem}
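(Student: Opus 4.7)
The plan is a two-step approximation scheme. First I would prove the special case where the second factor is in $A(K)$, then upgrade to arbitrary $g\in A^{q}(K,\mu)$ by an $L^{q}$-density argument in which the essential boundedness of $f$ does the work.

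For the first step, I would fix $h\in A(K)$ and choose $\{f_{n}\}\subset A(K)$ with $f_{n}\to f$ in $L^{q}(\mu)$. Since $A(K)$ is a uniformly closed algebra, $f_{n}h\in A(K)$ for every $n$. The pointwise bound $|(f_{n}-f)h|\leq \|h\|_{\infty}|f_{n}-f|$ (finite because $h\in C(K)$) gives $\|f_{n}h-fh\|_{L^{q}(\mu)}\leq \|h\|_{\infty}\|f_{n}-f\|_{L^{q}(\mu)}\to 0$, so $fh\in A^{q}(K,\mu)$. For the second step, pick $\{g_{n}\}\subset A(K)$ with $g_{n}\to g$ in $L^{q}(\mu)$. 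The previous step gives $fg_{n}\in A^{q}(K,\mu)$, and now the essential bound on $f$ (rather than on $g$) takes over: $\|fg_{n}-fg\|_{L^{q}(\mu)}\leq \|f\|_{\infty}\|g_{n}-g\|_{L^{q}(\mu)}\to 0$. Because $A^{q}(K,\mu)$ is $L^{q}$-closed, $fg\in A^{q}(K,\mu)$.

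For the identity $\widehat{fg}=\hat{f}\hat{g}$, the key observation is that at each $abpe$ $a$ the functional $\phi\mapsto \hat{\phi}(a)$ is continuous on $A^{q}(K,\mu)$ (being integration against the kernel $k_{a}$) and agrees with point evaluation on $A(K)$. Applying this continuity along the two approximations above gives $\widehat{fh}(a)=\lim_{n}f_{n}(a)h(a)=\hat{f}(a)h(a)=\hat{f}(a)\hat{h}(a)$ for $h\in A(K)$, and then $\widehat{fg}(a)=\lim_{n}\widehat{fg_{n}}(a)=\lim_{n}\hat{f}(a)g_{n}(a)=\hat{f}(a)\hat{g}(a)$ in general.

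There is no real obstacle here; the only point of care is that the approximants $\{f_{n}\}\subset A(K)$ of $f$ need not be uniformly bounded in $L^{\infty}(\mu)$, which is precisely why one cannot take a simultaneous approximation $f_{n}g_{n}\to fg$ in one pass and must instead split the argument so that on each leg the bounded factor ($h\in A(K)$ in step one, $f\in L^{\infty}(\mu)$ in step two) multiplies an $L^{q}$-convergent sequence.
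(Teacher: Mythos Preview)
Your argument is correct and is the standard way to handle this; the paper itself omits the proof entirely, labeling the lemma ``elementary.'' Your two-step scheme (first absorb $h\in A(K)$ using $\|h\|_{\infty}$, then absorb $g\in A^{q}(K,\mu)$ using $\|f\|_{\infty}$) is exactly the natural route, and your remark that one cannot do $f_{n}g_{n}\to fg$ in a single pass because the $f_{n}$ need not be uniformly bounded pinpoints the only subtlety.
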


The next lemma is proved in \cite{comm}.

\begin{lem} \label{l:ISO}
Let $\Omega = \nabla A^{q}(K,\mu)$. If $\Omega$ is finitely
connected, then every component of ({\bf C} $\setminus \Omega$) has
nonempty interior.
\end{lem}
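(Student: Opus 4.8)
The plan is to argue by contradiction: suppose some component $E$ of $\mathbf{C}\setminus\Omega$ has empty interior, where $\Omega=\nabla A^{q}(K,\mu)$ is finitely connected. Since $\Omega$ is finitely connected, $\mathbf{C}_\infty\setminus\Omega$ has finitely many components $E_0,E_1,\dots,E_m$ (with $E_0$ the unbounded one, say); each $E_j$ is a nonempty compact connected subset of the sphere. If $E=E_j$ has empty interior in $\mathbf{C}$, then $E_j$ is a connected set with no interior, hence either a point or an arc-like continuum; the crucial feature is that $\Omega\cup E_j$ is again open (one is "filling in" a set that carries no area and disconnects nothing locally), and in fact $\Omega\cup E_j$ is a domain strictly larger than $\Omega$. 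So the real content is to show that every point $w\in E_j$ is itself an analytic bounded point evaluation for $A^{q}(K,\mu)$, which contradicts the maximality of $\Omega=\nabla A^{q}(K,\mu)$ as an \emph{open} set (recall the set of $abpe$s is open, and $\hat f$ is analytic on it).

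The key steps, in order, would be: (i) Record that $E_j$ being a component of the complement with empty interior forces $E_j$ to be totally disconnected from having area — more precisely, $\Omega_j:=\Omega\cup E_j$ is open and connected, and $\Omega\subset\Omega_j$ with $\Omega_j\setminus\Omega=E_j$ having planar Lebesgue measure zero (a connected set with empty interior in the plane has measure zero, or one reduces to this via the structure of $E_j$). (ii) Fix $w_0\in E_j$ and a small disk $D=D(w_0,r)$ whose closure meets $\Omega$; choose a point $a\in D\cap\Omega$, so $a$ is an $abpe$ with kernel $k_a$ and constant $c$ on a neighborhood of $a$. (iii) Use the Cauchy/area integral representation: for $f\in A(K)$ and $w$ in a slightly smaller disk $D'\subset D$ avoiding $\partial D$, one writes $f(w)$ as an average of the values $\hat f(a)$, $a$ ranging over the circle $\partial D$ (or a sub-annulus in $\Omega$), against a fixed bounded kernel — here is where one exploits that $\partial D$ can be taken inside $\Omega$ because $E_j$ is thin, so $\hat f$ is already defined and controlled there. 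This yields $|f(w)|\le c'\,\|f\|_{L^{q}(\mu)}$ uniformly for $w\in D'$, i.e. every point of $D'\cap E_j$ is an $abpe$. (iv) Conclude $E_j\subset\nabla A^{q}(K,\mu)=\Omega$, contradicting $E_j\cap\Omega=\emptyset$ and $E_j\neq\emptyset$.

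The step I expect to be the main obstacle is (iii): producing a genuinely \emph{uniform} bound $|f(w)|\le c'\|f\|_{L^{q}(\mu)}$ for $w$ in an open set reaching into $E_j$, starting only from the pointwise $abpe$ inequality on $\Omega$. The honest way is a mean-value / Cauchy-integral maneuver: because $E_j$ has empty interior, a circle $\gamma$ around $w_0$ of almost any radius lies entirely in $\Omega$, so $f(w)=\frac{1}{2\pi i}\int_\gamma \frac{f(\zeta)}{\zeta-w}\,d\zeta = \frac{1}{2\pi i}\int_\gamma \frac{\hat f(\zeta)}{\zeta-w}\,d\zeta$ for $w$ inside $\gamma$, and then $|\hat f(\zeta)|=|\int f k_\zeta\,d\mu|\le \|k_\zeta\|_{L^{q'}(\mu)}\|f\|_{L^q(\mu)}$ with $\|k_\zeta\|$ bounded on $\gamma$ (continuity of $\zeta\mapsto k_\zeta$ in the appropriate topology on the compact set $\gamma\subset\Omega$); integrating gives the uniform estimate on any disk compactly inside $\gamma$. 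One must also be slightly careful that $f\mapsto\hat f$ agrees with $f$ as an analytic function on $\Omega$ and that the kernel bound is locally uniform — both are available from the discussion of $abpe$s in Section~1 and from the preceding Lemma. With that in hand the contradiction with the definition of $\nabla A^{q}(K,\mu)$ closes the argument.
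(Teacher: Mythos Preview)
The paper does not give its own proof of this lemma; it merely cites \cite{comm}. So let me evaluate your argument on its own, and compare it with the closely related maximum--principle computation carried out later in the paper (in the proof of Lemma~\ref{l:finite}), which is surely the intended model.

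Your overall strategy --- surround $E_j$ by a curve in $\Omega$, use the uniform $abpe$ bound on that curve together with the maximum principle to force the inside into $\nabla A^{q}(K,\mu)$ --- is the right one. But two of your steps do not hold as stated. First, the claim that ``a circle $\gamma$ around $w_0$ of almost any radius lies entirely in $\Omega$, because $E_j$ has empty interior'' is false: if $E_j$ is, say, a line segment, every small circle centered at an interior point of the segment meets $E_j$. The correct object is a single Jordan curve $\gamma\subset\Omega$ surrounding \emph{all} of $E_j$ (which exists because $\Omega$ is finitely connected), or equivalently the boundary of a small neighborhood $V_\epsilon=\{z:\operatorname{dist}(z,E_j)<\epsilon\}$; one then applies the maximum modulus principle on the whole region enclosed, not a local Cauchy formula. (The side claim in your step~(i) that a connected planar set with empty interior has measure zero is also false --- think of a Swiss cheese --- but fortunately you never actually use it.)

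Second, and this is the essential missing idea: for the maximum principle to give $|f(w)|\le c\|f\|_{L^q(\mu)}$ at $w\in E_j$, you need $f\in A(K)$ to be \emph{analytic} on the region enclosed by $\gamma$, in particular at the points of $E_j$. Since functions in $A(K)$ are analytic only on $K^\circ$, you must first prove that $E_j\subset K^\circ$. This step is absent from your outline, and it is precisely where the hypothesis ``$E_j$ has empty interior'' does its real work. The argument is short: if some $x\in E_j$ lay outside $K^\circ$, then (using finite connectivity to choose a small disk $D$ about $x$ meeting no other complementary component) $D$ would contain a nonempty open piece of $\mathbf{C}\setminus K\subset\mathbf{C}\setminus\Omega$, and that open set would have to sit inside $E_j$, contradicting empty interior. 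Once $E_j\subset K^\circ$ is in hand, take $\epsilon>0$ small enough that $\overline{V_\epsilon}\subset K^\circ$ and $V_\epsilon$ misses the other $E_k$'s; then $\partial V_\epsilon$ is a compact subset of $\Omega$, carries a uniform $abpe$ constant $c$, each $f\in A(K)$ is analytic on $\overline{V_\epsilon}$, and the maximum principle yields $|f(w)|\le c\|f\|_{L^q(\mu)}$ for all $w\in V_\epsilon$. Hence $E_j\subset V_\epsilon\subset\nabla A^q(K,\mu)=\Omega$, the desired contradiction. This is exactly the mechanism used in the paper's proof of Lemma~\ref{l:finite}.
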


\vspace{.05in} \noindent{\em Representing measures}. Let $\cal B$
be a closed subalgebra of $C(K)$. A complex representing measure of
$\cal B$ for $a\in K$
 is a finite measure $\nu$ on $K$  such that
\[ f(a) = \int f d\nu, \hspace{.05in} f\in \cal B.
\]
A {\em representing measure} for $a$  is a probability measure that
satisfies the above condition.  Note, if $a$ is a  peak point, then
the only representing measure for $a$ is the point mass
$\delta_{a}$.

\vspace{.05in} \noindent{\em The sweep of a measure}. Let $G$ be a
domain that is regular for the Dirichlet problem and let $\mu$ be a
measure on $\overline{G}$. The sweep of $\mu$ is the unique positive
measure $\tilde{\mu}$ on $\partial G$ that satisfies  $\int
_{\overline {G}}\tilde{u} \hspace{.05in}d\mu = \int_{\partial G}
 u d\tilde{\mu}, \hspace{.05in} u \in C(\partial G)$,
where $\tilde{u}$ is the solution of the Dirichlet problem for $u$.
 A simple fact is that if $\mu$ is a measure on $\overline{G}$,
then $\tilde{\mu} = \mu|\partial G + \widetilde{\mu|G}$.

\begin{lem}  \label{ABS1}
Let $K$ be a compact subset such that the components of $K^{\circ}$
are multi-nicely connected and the harmonic measures of the
components of $K^{\circ}$ are mutually singular. Let $\Omega$ be a
component of $K^{\circ}$.  If $A^{q}(K,\mu)$ is pure and if
$K^{\circ}$ is dense in $K$, then $\mu|\partial \Omega
\ll\omega_{\Omega}$.
\end{lem}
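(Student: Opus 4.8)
The plan is to reduce everything to a single component $\Omega$ of $K^\circ$ and use the purity of $A^q(K,\mu)$ together with the mutual-singularity hypothesis to ``localize'' the measure on $\partial\Omega$, then invoke Theorem~3 of \cite{comm} (already cited in the excerpt's outline of the necessity argument) to control $H^\infty(\Omega)$. First I would decompose $\mu|\partial\Omega$ with respect to $\omega_\Omega$ via the Lebesgue decomposition: write $\mu|\partial\Omega = h\,\omega_\Omega + \sigma$ with $\sigma \perp \omega_\Omega$, and assume for contradiction that $\sigma \neq 0$. The goal is to manufacture a nontrivial characteristic function in $A^q(K,\mu)$ supported on (a Borel subset of) the carrier of $\sigma$, contradicting purity.

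The key mechanism I expect to use is the \emph{sweep}: since $\Omega$ is multi-nicely connected, pick a conformal map $\alpha$ from a circular domain $W$ onto $\Omega$ that is almost $1$-$1$ on $\partial W$ w.r.t.\ arclength, hence on $\partial\Omega$ up to $\omega_\Omega$-null sets. For $f \in A(K)$, the function $f|\overline\Omega \in A^q(\overline\Omega,\omega_\Omega)\cap L^\infty$ has $\hat f \in H^\infty(\Omega)$, and by Theorem~3 of \cite{comm} this correspondence is onto $H^\infty(\Omega)$ when $\Omega$ is multi-nicely connected; so every $h\in H^\infty(\Omega)$ is an $L^q(\omega_\Omega)$-limit of $A(K)$-functions restricted to $\overline\Omega$. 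The mutual singularity of the harmonic measures of distinct components then lets me glue: a function behaving like $h$ on $\partial\Omega$ and like $0$ on the other boundary pieces is still approximable, because the approximating sequence's behaviour off $\partial\Omega$ is irrelevant $\mu$-a.e.\ once we pass to $\omega_{\Omega'}$-null sets for $\Omega'\neq\Omega$ and note $K^\circ$ is dense in $K$ (so $\mu$ lives on $\bigcup\partial\Omega_i$ up to the relevant null sets). Now if $\sigma\neq 0$, I would use that $\sigma\perp\omega_\Omega$ to find a compact $F\subset\partial\Omega$ with $\omega_\Omega(F)=0$ but $\sigma(F)>0$; by the almost-$1$-$1$ property of $\alpha$, the set $\alpha^{-1}(F)\subset\partial W$ is $\omega_W$-null, so one can build (via an outer-type function on $W$, or a peaking argument using peak points of $R(\overline W)$) a bounded sequence in $H^\infty(\Omega)$ converging to $\chi_F$ in $L^q(\mu|\partial\Omega)$: it stays bounded, converges to $1$ on $F$ in $\mu$-measure, and to $0$ off $F$ because the mass off $F$ sits on $h\,\omega_\Omega$ where the sequence tends to $0$. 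Pulling this back through the (onto) evaluation map gives $\chi_F\in A^q(K,\mu)$ with $0<\mu(F)<\mu(K)$, contradicting purity.

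The main obstacle I anticipate is the construction of the bounded approximating sequence for $\chi_F$ and, more delicately, justifying that it converges to $0$ (not merely stays bounded) on the absolutely continuous part $h\,\omega_\Omega$ \emph{and} on the other components' boundaries. The first point is classical on the disk (use a conformal map of $W$, peak functions at points of a null set, Fatou-type constructions), but transporting it through $\alpha$ requires the almost-$1$-$1$ hypothesis precisely so that $\omega_W$-null pulls back to $\omega_\Omega$-null and vice versa; one must be careful that the sequence built on $W$ descends to a well-defined bounded sequence in $H^\infty(\Omega)$, which is where multi-nicely connectedness (as opposed to mere finite connectivity) is essential. The second point is where the mutual-singularity hypothesis does its work: the approximants are only controlled on $\partial\Omega$ by the evaluation-map machinery, so one shows that restricted to $\partial\Omega_i$ ($i\neq$ the index of $\Omega$) the sequence can be taken to vanish, using that $\mu|\partial\Omega_i \ll \omega_{\Omega_i}$ is mutually singular with $\omega_\Omega$ — this likely requires first knowing $\mu|\partial\Omega_i\ll\omega_{\Omega_i}$, which is the conclusion of the lemma for the other components, suggesting the proof is really a simultaneous statement over all components or an induction/bootstrap over a suitable exhaustion. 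I would expect the actual write-up to split off this gluing-and-vanishing step as the technical heart, with the Lebesgue-decomposition reductio as the clean outer shell.
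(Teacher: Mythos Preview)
Your outer shell (Lebesgue decomposition, contradict purity by producing a nontrivial characteristic function in $A^q(K,\mu)$) matches the paper, but the mechanism you propose for producing that characteristic function has a genuine gap, and the paper takes a completely different route at that step.

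The gap is in your step ``build a bounded sequence in $H^\infty(\Omega)$ converging to $\chi_F$ in $L^q(\mu|\partial\Omega)$.'' Functions in $H^\infty(\Omega)$ have nontangential boundary values only $\omega_\Omega$-a.e.; on the $\omega_\Omega$-null set $F$ they have no canonical values at all, so ``converges to $1$ on $F$ in $\mu$-measure'' has no meaning until you already know how to embed $H^\infty(\Omega)$ into $L^\infty(\mu|\partial\Omega)$---which is essentially equivalent to the absolute continuity you are trying to prove. Peak-function or outer-function constructions on $W$ give you control in $L^q(\omega_W)$, hence in $L^q(\omega_\Omega)$, but transfer nothing to the $\sigma$-part of $\mu$. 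The circularity you yourself flag (needing $\mu|\partial\Omega_i\ll\omega_{\Omega_i}$ for the other components to glue) is a symptom of the same problem; there is no bootstrap that closes it.

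The paper avoids this by working on the dual side. It does not try to approximate $\chi_F$ at all. Instead it enlarges $A(K)$ to a Dirichlet algebra $A(G)$, where $G$ is a union of simply connected ``caps'' $G_i\supset\Omega_i$ chosen so that one boundary component $E$ of $\partial\Omega_j$ is exactly $\partial G_j$; Davie's theorem plus the mutual-singularity hypothesis make $A(G)$ Dirichlet. Then the Abstract F.\ and M.\ Riesz theorem forces every annihilating measure of $A(G)$, restricted to $E$, to be absolutely continuous with respect to $\omega_{\Omega_j}|E$. Applying this to $g\mu$ for every $g\in A^q(K,\mu)^\perp$ shows that the singular part of $\mu|E$ is annihilated by every such $g$, so Hahn--Banach puts $\chi_{\Delta\cap E}$ into $A^q(K,\mu)$, and purity finishes. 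The point is that absolute continuity is obtained for \emph{annihilators} first, and $\chi_{\Delta\cap E}\in A^q(K,\mu)$ follows by duality rather than by direct approximation.
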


\begin{proof}
Let $\{\Omega_{j}\}_{j=0}^{\infty}$ be the collection of the
components of $K^{\circ}$.  Fix an integer $j\geq 0$ and let $E$ be
a component
 of $\partial \Omega_{j}$.  Let $G_{j}$ be the unique simply connected
domain in the sphere {\bf C}$_{\infty}$ that has
 $E$ as its boundary and contains $\Omega_{j}$.
Since $\Omega_{j}$ is multi-nicely connected, $G_{j}$ must be nicely
connected. For $i \neq j$, let $G_{i}$ be the bounded simply
connected domain that contains $\Omega_{i}$ and whose boundary is a
component of $\partial \Omega_{i}$. Clearly, $G_{i}$ is also nicely
connected. Now let $\Omega$ be the union of those $G_{i}$'s for
which $G_{i} \cap G_{j} = \emptyset$ (different $G_{i}$'s are either
disjoint or one contains other).  Set $ G= \Omega \bigcup  G_{j}$.
Then each component of $G$ is equal to some $G_{i}$. Let
$\{G_{i_{k}}\}$ be the collection of all the components of $G$. Then
our hypothesis on $K$ implies that the harmonic measures of the
components of $G$ are mutually singular. It follows from \cite{davi}
that $A(G)$ is a Dirichlet algebra on $\partial G$. Hence, every
point in $\partial G$ is a peak point for $A(G)$ and every trivial
Gleason part of $A(G)$ consists of a single point. Therefore,
$\{G_{i_{k}}\}$ is the collection of all the nontrivial Gleason
parts of $A(G)$.

Let $\eta \perp A(G)$. By the {\em Abstract F. and M. Riesz Theorem}
\cite{ga}, $\eta = \sum_{m\geq 0} \eta_{m}$, where each $\eta_{m}
\perp A(G)$,
$\eta_{m} \ll v_{m}$  for a representing measure $ v_{m}$  at some
point $a_{m}$ in $\overline{G}$, $v_{m}$'s are mutually singular.
Let $a \in \partial G$.  Then $a$ is  a peak point. Let $f\in A(G)$
be a peak function for $a$. Then $f^{n}(z) \rightarrow \chi_{\{a\}}$
pointwise, and thus $0 = \int \lim_{n\rightarrow \infty} f^{n} d
\eta_{m}
 = \eta_{m}(\{a\})$.
Hence $ a_{m} \in G$ (otherwise, $\nu_{m}$ is the point mass at
$a_{m}$ and hence $\nu_{m} (\overline{G} - \{a_{m}\}) =0$. So we
conclude that $\eta_{m}(\overline{G}) = \eta_{m}(\overline{G} -
\{a_{m}\}) + \eta_{m}(\{a_{m}\}) = 0+0=0$, a contradiction).

Let $G_{i_{k_{m}}}$ be the component that contains $a_{m}$ and let
$\tilde{v}_{m}$ be the sweep of $v_{m}$ on $\partial G_{i_{k_{m}}}$.
Then  for each $ g \in A(G)$

\begin{align*}
\int_{\partial G} g(z) d \tilde{v}_{m}   =
 \int_{\partial G_{i_{k_{m}}}} g(z) d \tilde{v}_{m}
= \int_{\overline{G}_{i_{k_{m}}}} g(z) d v_{m}  
= \int_{\partial G_{i_{k_{m}}}} g(z) d \omega_{a_{m}},
\end{align*}
where $\omega_{a_{m}}$ is the harmonic measure of $G_{i_{k_{m}}}$
evaluated at $a_{m}$. It follows by the uniqueness that $
\tilde{v}_{m}= \omega_{a_{m}}$. Hence, we have
\begin{align*}
 \eta|\partial G   \ll (\sum v_{a_{m}})|\partial G
\ll \sum \tilde{v}_{a_{m}}|\partial G
 = \sum \omega_{a_{m}}|\partial G
\end{align*}
In particular, we have that $\eta|E \ll \omega_{\Omega_{j}} |E $.

Finally, suppose that $g\in L^{p}(\mu)$ such that $\int f g d \mu =
0$, for $f \in A(K)$, where $\frac{1}{q} + \frac{1}{p} = 1$. Then $g
\perp A(G)$ as well. Hence, $g\mu |E \ll \omega_{\Omega_{j}}|E$.
This implies that $(g\mu)_{s}|E = 0$, where $(g\mu)_{s}$ is the
singular part of the {\em Lebesgue decomposition} of $g\mu$ with
respect to $\omega_{\Omega_{j}}|E$. Consequently,  we have $g\perp
\chi_{\Delta\cap E}$, where $\Delta$ is the carrier of
$\mu_{s}|\partial \Omega_{j}$ and $\mu_{s}$ is the singular part of
the  {\em Lebesgue decomposition } of $\mu$ with respect to
$\omega_{\Omega_{j}}$. Now an application of {\em Hahn-Banach
theorem} yields $\chi_{\Delta\cap E} \in A^{q}(K,\mu)$. By purity,
$\chi_{\Delta\cap E} = 0$ a.e. $[\mu]$ and therefore $\mu|E \ll
\omega_{\Omega_{j}} |E $. Since $E$ is an arbitrary component of
$\partial \Omega_{j}$, it follows that $\mu|\partial \Omega_{j} \ll
\omega_{\Omega_{j} }$.

\end{proof}

\begin{prop} \label{p:ABS2}
Let $K$ be a compact subset such that the components of $K^{\circ}$
are multi-nicely connected and the harmonic measures of $K^{\circ}$
are mutually singular.  Let $\{\Omega_{j}\}_{j=0}^{\infty}$ denote
the collection of the components of $K^{\circ}$. Set $\omega =
\sum_{j=0}^{\infty} \frac{1}{2^{j}} \omega_{\Omega_{j}}$. If
$A^{q}(K,\mu)$ is pure, then $\mu|\partial K \ll\omega$.
\end{prop}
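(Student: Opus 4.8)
The plan is to reduce to the hypotheses of Lemma~\ref{ABS1} and then dispose of the piece of $\partial K$ that that lemma does not reach, applying Theorem~\ref{t:approx} at both stages, together with the fact that if a positive measure $\lambda$ on $K$ is carried by a set disjoint from $\overline{\Omega}_j$, then $A^q(K,\lambda)$ has no $abpe$ inside $\Omega_j$.

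First I would show that $\mu$ is carried by $K_1:=\overline{K^\circ}$. Put $\mu_0:=\mu|(K\setminus K_1)$; it is carried by a set disjoint from $\overline{\Omega}_j$ for every $j$, while $\nabla A^q(K,\mu_0)$ is an open subset of $K$, hence is contained in $K^\circ=\bigcup_j\Omega_j$. So the quoted fact forces $\nabla A^q(K,\mu_0)=\emptyset$; by Theorem~\ref{t:approx} $A(K)$ is then dense in $L^q(\mu_0)$, and therefore $\mu_0=0$ by the purity of $A^q(K,\mu)$. Thus $\mu=\mu|K_1$. Every function continuous on $K_1$ and analytic on $K_1^\circ=K^\circ$ extends, by Tietze's theorem, to a function continuous on $K$ and analytic on $K^\circ$, which lies in $A(K)$; hence $A(K)|K_1$ is uniformly dense in $A(K_1)$, so that $A^q(K,\mu)=A^q(K_1,\mu)$ and the purity carries over to $A^q(K_1,\mu)$. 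Since also $K_1^\circ=K^\circ$ (so that $K_1$ has the same components and the same harmonic measures as $K$) and $K_1^\circ$ is dense in $K_1$, we may and do assume from now on that $K=\overline{K^\circ}$.

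After this reduction Lemma~\ref{ABS1} applies and gives $\mu|\partial\Omega_j\ll\omega_{\Omega_j}$ for every $j$, and hence $\mu|\bigl(\bigcup_j\partial\Omega_j\bigr)\ll\sum_j 2^{-j}\omega_{\Omega_j}=\omega$. It remains only to check that $R:=\partial K\setminus\bigcup_j\partial\Omega_j$ is $\mu$-null: note $\omega(R)=0$ because each $\omega_{\Omega_j}$ is carried by $\partial\Omega_j$, so this is exactly what is still missing for $\mu|\partial K\ll\omega$. But $R$ is disjoint from $\overline{\Omega}_j$ for every $j$, so the argument of the previous paragraph, applied to $\mu|R$ in place of $\mu_0$, yields $\nabla A^q(K,\mu|R)=\emptyset$, hence $A(K)$ dense in $L^q(\mu|R)$ by Theorem~\ref{t:approx}, hence $\mu(R)=0$ by purity. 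Therefore $\mu|\partial K=\mu|\bigl(\bigcup_j\partial\Omega_j\bigr)\ll\omega$, as claimed.

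The one step to be pinned down --- used twice above --- is the claim that a positive measure $\lambda$ on $K$ carried by a set disjoint from $\overline{\Omega}_j$ has no $abpe$ in $\Omega_j$ for $A^q(K,\lambda)$; equivalently, any $abpe$ in $\Omega_j$ forces a part of $\lambda$ to be absolutely continuous with respect to $\omega_{\Omega_j}$. This is precisely where the two hypotheses on $K^\circ$ are indispensable: the assumption that $\Omega_j$ is multi-nicely connected (its boundary being slit-free) prevents an evaluation at an interior point of $\Omega_j$ from being controlled by a measure that merely accumulates to $\partial\Omega_j$ from outside, and the mutual singularity of the harmonic measures yields functions of $A(K)$ that separate $\overline{\Omega}_j$ from the other components, so that a measure living off $\overline{\Omega}_j$ cannot govern evaluations on $\Omega_j$ through those other pieces of $K$ either. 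Everything else is bookkeeping built on Lemma~\ref{ABS1} and Theorem~\ref{t:approx}.
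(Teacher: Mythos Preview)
Your argument hinges entirely on the ``key fact'' you isolate at the end: that if a positive measure $\lambda$ on $K$ is carried by a Borel set disjoint from $\overline{\Omega}_j$, then $A^q(K,\lambda)$ has no $abpe$ inside $\Omega_j$. You use this twice and correctly identify it as the crux, but you do not prove it. The paragraph you offer in its place is not an argument: ``multi-nicely connected \ldots prevents an evaluation from being controlled by a measure that merely accumulates from outside'' is an assertion, not a deduction, and ``mutual singularity of the harmonic measures yields functions of $A(K)$ that separate $\overline{\Omega}_j$ from the other components'' is simply false as stated---mutual singularity is a measure-theoretic condition on $\partial K$ and does not by itself produce functions in $A(K)$ with prescribed behavior on the components. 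Your reformulation ``any $abpe$ in $\Omega_j$ forces a part of $\lambda$ to be absolutely continuous with respect to $\omega_{\Omega_j}$'' is likewise unproved (and not obviously equivalent). An $abpe$ at $w\in\Omega_j$ only gives you a complex representing measure $k_w\lambda$ for $w$ carried off $\overline{\Omega}_j$; ruling this out requires real work---essentially the Dirichlet-algebra and representing-measure machinery that already drives the proof of Lemma~\ref{ABS1}---and you have not supplied it.

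The paper's proof takes a completely different route and never needs your key fact. It picks a maximal annihilator $g\in A^q(K,\mu)^\perp$ (so that $[|g|\mu]=[\mu]$ by purity), and then shows directly, via peak functions and the Cauchy transform, that $\widehat{g\mu}$ vanishes a.e.\ off $\overline{K^\circ}$; the standard fact that a measure with a.e.\ vanishing Cauchy transform on an open set is zero there then kills $\mu$ off $\overline{K^\circ}$. Lemma~\ref{ABS1} handles the rest. This avoids Theorem~\ref{t:approx} altogether and replaces your unproved separation principle with a concrete analytic computation. If you want to salvage your approach, you would need to actually establish the key fact---likely by going through the same Dirichlet-algebra construction as in Lemma~\ref{ABS1}---at which point you are doing at least as much work as the paper's direct argument.
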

\begin{proof}
By Lemma 17.10 in \cite[p. 246]{conw}, there exists a function
 $g\in A^{q}(K,\mu)^{\perp}$ such that $|f|\mu \ll |g|\mu$ for each
$f\in A^{q}(K,\mu)^{\perp}$. Since $A^{q}(K,\mu)$ is pure, we see
that $g \neq 0$ on $supp(\mu)$ {\em a.e.} $[\mu]$.
 Thus $[|g|\mu] = [\mu]$ {\em a.e.} $[\mu]$. Set $\nu = |g|\mu$.

Let  $a\in K -\overline{K^{\circ}}$ be such that
 $\int \frac{d |\nu|} {|z-a|} < \infty$  and let $f$ be a peak
function for $a$. For each integer $n\geq 1$,
 $\frac{1-f^{n}(z)}{z-a} \in A(K)$, so we have
 \[\hat{\nu}(a) = \lim_{n\rightarrow \infty}
\int \frac{1-f^{n}(z)}{z-a} d\nu = 0
\]
Since the set $\{a: \int \frac{d |\nu|} {|z-a|} < \infty\}$ has full
area measure in the plane, it follows by a well-known fact (see
Theorem~\ref{t:light} or the comments after it) that $\nu$ is the
zero measure  off $\overline{K^{\circ}}$.

Finally, since $\mu|\partial \Omega_{j} \ll \omega_{\Omega_{j}}$ for
each $j$ (by Lemma~\ref{ABS1}), the conclusion follows.

\end{proof}

The proof of the next lemma can be found in \cite{b_apr}.

\begin{lem} \label{l:key}
Let $\Omega$ be an open subset whose boundary does not contain
$\infty$. Suppose the components of $\Omega$ are  multi-nicely
connected and the harmonic measures of the components of $\Omega$
are mutually singular. Let $\{\Omega_{j}\}_{j=0}^{\infty}$ be the
collection of the components of $\Omega$. If all but finitely many
components of $\Omega$ are simply connected, then $A(\Omega)$ is
boundedly pointwise dense in $H^{\infty}(\Omega)$.
\end{lem}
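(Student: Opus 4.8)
The plan is to reduce the statement to the finitely connected case together with a uniform control on the finitely many non-simply-connected pieces. Write $\Omega = \Omega_{0}\cup\dots\cup\Omega_{N}\cup\bigcup_{j>N}\Omega_{j}$, where $\Omega_{0},\dots,\Omega_{N}$ are the (at most finitely many) multiply connected components and every $\Omega_{j}$ with $j>N$ is simply connected hence nicely connected. Since $H^{\infty}(\Omega)=\prod_{j}H^{\infty}(\Omega_{j})$ as a sup-normed product and $A(\Omega)$ contains, for each finite $F\subset\{0,1,2,\dots\}$, the functions that are prescribed on $\bigcup_{j\in F}\overline{\Omega}_{j}$ and vanish on the remaining closures (this uses that the closures are pairwise disjoint, which follows from mutual singularity of the harmonic measures — distinct components cannot share boundary), it suffices to prove the bounded pointwise density on each component separately and then glue: given $f\in H^{\infty}(\Omega)$ with $\|f\|\le 1$, approximate $f|\Omega_{j}$ for $j\le M$ by $g_{j}^{(n)}\in A(\Omega_{j})$ with $\|g_{j}^{(n)}\|\le 1$, set the approximant equal to a fixed $A(\Omega_{j})$-extension of a good truncation on the tail and to the $g_{j}^{(n)}$ on the head, and let $M\to\infty$ along the diagonal. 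So the real content is: \emph{for a single multi-nicely connected domain $\Omega_{j}$, $A(\Omega_{j})$ is boundedly pointwise dense in $H^{\infty}(\Omega_{j})$.}

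For that, I would use the definition of multi-nicely connected directly: let $\alpha\colon W\to\Omega_{j}$ be a conformal map from a circular domain $W$ that is almost $1$-$1$ on $\partial W$ with respect to arclength. Composition with $\alpha$ identifies $H^{\infty}(\Omega_{j})$ with $H^{\infty}(W)$ (and respects sup norms), and $\alpha$ carries $A(W)$ into $A(\Omega_{j})$; conversely, because $\alpha$ is almost $1$-$1$ on the boundary, functions in $A(\Omega_{j})$ pull back into $A(W)$ up to the usual boundary-value identifications. Thus it is enough to know that $A(W)$ is boundedly pointwise dense in $H^{\infty}(W)$ for a circular domain $W$ — and for a circular (hence finitely connected, smooth-boundary) domain this is classical: every $H^{\infty}(W)$ function is a bounded pointwise limit of functions in $A(W)$, e.g. by dilating toward $\partial W$, or by a partial-fractions/Cauchy-integral construction on the finitely many boundary circles. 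I would then transport the approximants back by $\alpha$, checking that the pointwise convergence survives (it does off a harmonic-null set, which is enough since bounded pointwise density is usually stated modulo such sets), and that the sup-norm bound is preserved.

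The main obstacle I anticipate is the gluing step in the infinitely-connected-but-tame setting: one must produce, for a finite head $F$, an element of $A(\Omega)$ that agrees with the chosen head approximants on $\bigcup_{j\in F}\overline{\Omega}_{j}$ and is uniformly small (in sup norm) on the entire tail $\bigcup_{j\notin F}\overline{\Omega}_{j}$, all while respecting the closed-subalgebra structure of $A(\Omega)$ on the disjoint union $\overline{\Omega}$. This is exactly where the hypotheses bite: mutual singularity of the harmonic measures forces the closures $\overline{\Omega}_{j}$ to be "essentially disjoint" (they cannot overlap on a set carrying harmonic mass), so the characteristic function of a finite union of component closures lies in $A(\Omega)$ — or can be approximated within it — and one can splice. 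I would handle this by first establishing the disjointness-of-closures consequence of mutual singularity, then invoking it to decompose $A(\Omega)$ as the closure of the algebraic direct sum of the $A(\Omega_{j})$; after that the diagonal argument over $M$ is routine. The finitely-many-multiply-connected hypothesis is what keeps the head estimates uniform (only $N+1$ "hard" components ever appear, independent of $M$), so no uniformity is lost in the limit.
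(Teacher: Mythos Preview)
The paper does not prove this lemma; it merely cites \cite{b_apr}. So there is no ``paper's proof'' to compare against, and your proposal must stand on its own. It has two genuine gaps.

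First, your gluing step rests on the claim that mutual singularity of the harmonic measures forces the closures $\overline{\Omega}_{j}$ to be pairwise disjoint, so that $\chi_{\cup_{j\in F}\overline{\Omega}_{j}}\in A(\Omega)$. This is false. Take $\Omega_{1}=\{|z+1|<1\}$ and $\Omega_{2}=\{|z-1|<1\}$: the harmonic measures live on disjoint circles and are certainly mutually singular, yet $\overline{\Omega}_{1}\cap\overline{\Omega}_{2}=\{0\}$ and the indicator of $\overline{\Omega}_{1}$ is discontinuous on $\overline{\Omega}$, hence not in $A(\Omega)$. More seriously, in the applications of this lemma inside the paper (see the construction of $G$ in Lemma~\ref{l:key2}) the components are deliberately built so that their closures \emph{do} meet along pieces of $\partial K$; the Davie theorem \cite{davi} is invoked precisely because one must work on a genuinely connected $\partial G$, not a disjoint union. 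So the splicing of per-component approximants cannot be done by multiplying by characteristic functions; one needs an honest simultaneous approximation argument on $\overline{\Omega}$ (this is where Dirichlet/hypo-Dirichlet structure of $A(\Omega)$ on $\partial\Omega$, guaranteed by the mutual-singularity hypothesis via \cite{davi}, actually enters).

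Second, your single-component reduction is not justified as written. You assert that ``$\alpha$ carries $A(W)$ into $A(\Omega_{j})$'', i.e.\ that $g\circ\alpha^{-1}\in A(\Omega_{j})$ for $g\in A(W)$. But multi-nice connectivity only gives that $\alpha$ has boundary values which are almost $1$--$1$ with respect to arclength; it says nothing about $\alpha^{-1}$ extending continuously to $\overline{\Omega}_{j}$, and in general it does not (think of a simply connected $\Omega_{j}$ with a prime-end slit: the Riemann map from the disk is almost $1$--$1$ on $\partial\mathbb{D}$, yet its inverse has no continuous extension). Thus ``dilate in $W$ and transport back'' does not land in $A(\Omega_{j})$. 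The correct route for a single multi-nicely connected domain goes through pointwise bounded approximation theory (Farrell--Rubel--Shields type results and their finitely connected analogues), not through naive conjugation by $\alpha$.
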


For a finite positive  measure $\mu$, let $A^{\infty}(K,\mu)$ denote
the weak-star closure of $A(K)$ in $L^{\infty}(\mu)$.

\begin{lem}
Let $\Omega$ be a multi-nicely connected domain. Let $W$ be a
circular domain that is conformally equivalent to $\Omega$ and let
$\phi$ be a conformal map of \hspace{.02in} $W$ onto $\Omega$. Then
the boundary value function $\tilde{\phi}$ has a well-defined
inverse, $\tilde{\phi}^{-1}$, on $\partial \Omega$. Moreover,
 $\tilde{\phi}^{-1}\in A^{\infty}
(\overline{\Omega}, \omega_{\Omega})$.

\end{lem}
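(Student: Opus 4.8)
The plan is to transport the problem from $\Omega$ to the circular domain $W$, where the boundary geometry is explicit. First I would use the defining property of a multi-nicely connected domain: since $\Omega$ is multi-nicely connected, the conformal map $\phi\colon W\to\Omega$ can be taken so that its boundary value function $\tilde\phi$ is almost $1$-$1$ on $\partial W$ with respect to arclength (equivalently, $\omega_W$). Let $N\subset\partial W$ be the $\omega_W$-null set off which $\tilde\phi$ is injective; on $\partial W\setminus N$ the set-theoretic inverse $\tilde\phi^{-1}$ is well defined as a map into $\partial W$, and it is a Borel function. This gives the first assertion, modulo checking that the image $\tilde\phi(\partial W\setminus N)$ is all of $\partial\Omega$ up to an $\omega_\Omega$-null set — which follows because $\omega_\Omega = \omega_W\circ\tilde\phi^{-1}$ (the harmonic measure of $\Omega$ is the push-forward of that of $W$ under the boundary map).

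The substantive part is showing $\tilde\phi^{-1}\in A^\infty(\overline\Omega,\omega_\Omega)$, i.e. that the coordinate-recovery function on $\partial\Omega$ lies in the weak-star closure of $A(\overline\Omega)$ in $L^\infty(\omega_\Omega)$. Here is where I would bring in Lemma~\ref{l:key}: with $\Omega$ viewed as an open set all of whose (one) components are multi-nicely connected, $A(\Omega)$ is boundedly pointwise dense in $H^\infty(\Omega)$. The identity function $z\mapsto z$ on $W$ is a bounded analytic function, so $\psi := z\circ\phi^{-1} \in H^\infty(\Omega)$ — this $\psi$ is precisely the analytic extension into $\Omega$ whose nontangential boundary values on $\partial\Omega$ equal $\tilde\phi^{-1}$ (using the boundary-value discussion of Section~1: since $\tilde\phi$ is $1$-$1$ a.e.\ $[\omega_W]$, functions in $H^\infty(W)$, and in particular the coordinate function, push forward to functions with nontangential limits a.e.\ $[\omega_\Omega]$ on $\partial\Omega$). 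By Lemma~\ref{l:key} choose a bounded sequence $\{g_n\}\subset A(\Omega)$ with $g_n\to\psi$ pointwise on $\Omega$. Each $g_n$ is continuous on $\overline\Omega$, hence lies in $A(\overline\Omega)\subset A^\infty(\overline\Omega,\omega_\Omega)$.

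It remains to pass from bounded pointwise convergence inside $\Omega$ to weak-star convergence of boundary values in $L^\infty(\omega_\Omega)$. I would argue this through the harmonic measure: for a fixed base point $z_0\in\Omega$, $\omega_\Omega=\omega_{z_0}$ is a representing measure for evaluation at $z_0$ for $A(\overline\Omega)$, and more generally the Poisson-type integral against $\omega_{z}$ reproduces values of functions in $A(\overline\Omega)$ at interior points $z$. So $g_n(z)=\int g_n\,d\omega_z$ and $\psi(z)=\int\tilde\phi^{-1}\,d\omega_z$ (the latter because $\psi\in H^\infty(\Omega)$ has boundary values $\tilde\phi^{-1}$ and on a multi-nicely connected domain bounded analytic functions are recovered from their boundary values by the harmonic measure — this is essentially the content of Lemma 2/Lemma 3 of \cite{comm} cited earlier, or can be derived by transporting to $W$ where it is the classical Poisson representation on a circular domain). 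Then $g_n\to\psi$ pointwise together with the uniform bound and dominated convergence gives $\int g_n\,d\omega_z\to\int\tilde\phi^{-1}\,d\omega_z$ for every $z$, and since the measures $\{\omega_z\}$ and their absolutely continuous perturbations are weak-star dense in the predual in the appropriate sense — more cleanly: a bounded sequence in $L^\infty(\omega_\Omega)$ has a weak-star convergent subsequence, and its limit $h$ must satisfy $\int h\,d\omega_z=\psi(z)$ for all $z\in\Omega$, forcing $h=\tilde\phi^{-1}$ a.e.\ $[\omega_\Omega]$ by uniqueness of boundary values. Hence $\tilde\phi^{-1}\in A^\infty(\overline\Omega,\omega_\Omega)$.

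\textbf{Main obstacle.} The delicate point I anticipate is the last step: bounded pointwise convergence on $\Omega$ does not automatically upgrade to weak-star convergence of boundary traces unless one knows that $\omega_\Omega$-integration against the candidate limit genuinely recovers $\psi$ on the interior — i.e.\ that $\psi$, a priori only a bounded analytic function, actually equals the harmonic extension of its own boundary values. For a general domain this can fail, but for a multi-nicely connected $\Omega$ it holds, and the honest way to see it is to conjugate by $\phi$ and reduce to the classical fact on the circular domain $W$ (where $H^\infty(W)$ functions are Poisson integrals of their boundary values against $\omega_W$), taking care that the almost-$1$-$1$ boundary correspondence makes the push-forward of $\omega_W$ equal to $\omega_\Omega$ and does not create measure-theoretic pathology. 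Everything else is bookkeeping with push-forward measures and dominated convergence.
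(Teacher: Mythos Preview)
Your approach is essentially the same as the paper's: use the almost $1$-$1$ property of $\tilde\phi$ for the inverse, invoke Lemma~\ref{l:key} to get a bounded sequence in $A(\Omega)$ converging pointwise to $\phi^{-1}$ on $\Omega$, and then pass to weak-star convergence of boundary values in $L^\infty(\omega_\Omega)$. The paper compresses your ``main obstacle'' into the single sentence ``Then one can show that $f_n\to\tilde\phi^{-1}$ in the weak-star topology of $L^\infty(\omega_\Omega)$,'' so your expanded discussion of that step (via Poisson representation transported to $W$ and a subsequential weak-star limit argument) is exactly the content being suppressed there.
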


\begin{proof}
Since $\phi$ is almost 1-1 on $\partial W$ with respect to
$\omega_{W}$, it is apparent that $\tilde{\phi}$ has a well-defined
inverse function on a set of full $\omega_{\Omega}$ measure. By
Lemma~\ref{l:key} $A(\Omega)$ is boundedly pointwise dense in
$H^{\infty}(\Omega)$. Choose a bounded sequence $\{f_{n}\}$ in
$A(\Omega)$ such that $f_{n}\rightarrow \phi^{-1}$ on $\Omega$. Then
one can show that $f_{n} \rightarrow \tilde{\phi}^{-1}$ in the
weak-star  topology of $L^{\infty}(\omega_{\Omega})$. Hence
$\tilde{\phi}^{-1} \in A^{\infty}(\overline{W}, \omega_{\Omega})$.

\end{proof}
The following is one of our key lemmas.

\begin{lem} \label {l:key2}
Suppose that each component of $K^{\circ}$ is multi-nicely connected
and the harmonic measures of the components of $K^{\circ}$ are
mutually singular. Let $U$ be a component of $\nabla A^{q}(K,\mu)$
and let $\Omega$ be the component of $K^{\circ}$ that contains $U$.
Set $\tau = \mu|\overline{\Omega}$. Then $
A^{q}(\overline{\Omega},\tau)\subset A^{q}(K,\mu)$ and $ U \subset
\nabla A^{q}(\overline{\Omega},\tau)$.
\end{lem}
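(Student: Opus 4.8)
The plan is to prove the two inclusions separately, deriving the space inclusion $A^q(\overline{\Omega},\tau)\subset A^q(K,\mu)$ first and then using it to obtain the inclusion of $abpe$ sets. For the space inclusion, the issue is that a function in $A(\overline\Omega)$ need not extend to an element of $A(K)$, so I cannot argue by uniform approximation on $K$ directly; instead I would argue at the level of the $L^q$ closures. The natural route is duality: it suffices to show that every $g\in L^p(\mu)$ (with $1/p+1/q=1$) that annihilates $A(K)$ also annihilates $A(\overline\Omega)$ when restricted to $\overline\Omega$, i.e. that $\chi_{\overline\Omega}\,g \perp A(\overline\Omega)$. Here the hypothesis that the components of $K^\circ$ are multi-nicely connected and have mutually singular harmonic measures is exactly what lets me invoke Lemma~\ref{l:key}: $A(\Omega)$ is boundedly pointwise dense in $H^\infty(\Omega)$, and more importantly the results leading up to Proposition~\ref{p:ABS2} control the singular part of $g\mu$ on $\partial\Omega$ relative to $\omega_\Omega$. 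After splitting off an $L^q(\mu|\Delta_0)$ summand one may assume $A^q(K,\mu)$ is pure, apply Lemma~\ref{ABS1} to get $\mu|\partial\Omega\ll\omega_\Omega$, and then the annihilation transfers because a measure absolutely continuous with respect to harmonic measure that annihilates $R(K)$ (hence $A(\overline\Omega)$ after sweeping, using the representing-measure/sweep machinery from the proof of Lemma~\ref{ABS1}) must annihilate $A(\overline\Omega)$.

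For the inclusion $U\subset\nabla A^q(\overline\Omega,\tau)$, once the space inclusion is in hand the argument is comparatively soft. Fix $\lambda\in U$. By definition of $\nabla A^q(K,\mu)$ there is a neighborhood $G\ni\lambda$ and $c>0$ with $|f(z)|\le c\|f\|_{L^q(\mu)}$ for all $f\in A(K)$ and $z\in G$; shrinking $G$ I may assume $G\subset\Omega$. I then need the same estimate with $A(K)$ replaced by $A(\overline\Omega)$ and $\mu$ by $\tau=\mu|\overline\Omega$. Since $\|f\|_{L^q(\tau)}\le\|f\|_{L^q(\mu)}$ automatically, the point is to control $|f(\lambda)|$ for $f\in A(\overline\Omega)$ by the $L^q(\tau)$ norm. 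Because $A(\overline\Omega)\subset A^q(\overline\Omega,\tau)\subset A^q(K,\mu)$ (the first inclusion by definition, the second just proved), a function $f\in A(\overline\Omega)$ is in particular an $L^q(\mu)$-limit of functions $f_n\in A(K)$, and by the first elementary Lemma its hat-function $\hat f$ on $\nabla A^q(K,\mu)$ agrees with the analytic function it defines; evaluating the kernel functional $k_\lambda\in A^q(K,\mu)^*$ against $f$ gives $|\hat f(\lambda)|=|\int f\,k_\lambda\,d\mu|\le\|k_\lambda\|\,\|f\|_{L^q(\mu)}$. The remaining gap is to replace $\|f\|_{L^q(\mu)}$ by $\|f\|_{L^q(\tau)}$ on $A(\overline\Omega)$; this follows because the functional $f\mapsto\hat f(\lambda)$ on $A(\overline\Omega)$, being bounded in the $L^q(\tau)$ norm by the local boundedness of bounded analytic functions together with the representing-measure property of points of $\Omega$ (Harnack/harmonic-measure estimate, exactly as in the necessity outline in Section~2), extends to $A^q(\overline\Omega,\tau)^*$, giving a kernel $k'_\lambda$ and hence $\lambda\in\nabla A^q(\overline\Omega,\tau)$.

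I expect the main obstacle to be the transfer of annihilating measures in the space inclusion — specifically, showing that $g\in A^q(K,\mu)^\perp$ forces $\chi_{\overline\Omega}g\,\mu\perp A(\overline\Omega)$ rather than merely $\perp R(\overline\Omega)$ or $\perp$ the restrictions of $A(K)$. The subtlety is that on $\overline\Omega$ one is comparing the algebra $A(\overline\Omega)$ (continuous on $\overline\Omega$, analytic on $\Omega$) with $A(\Omega)$, and these differ in general, as the paper explicitly warns; bridging them requires the multi-nicely connected structure (a conformal model on a circular domain $W$ via the map $\phi$ of the preceding Lemma, whose boundary values are almost $1$-$1$) and the Abstract F.\ and M.\ Riesz decomposition of $g\mu|\partial\Omega$ into pieces absolutely continuous with respect to representing measures, each of which is a harmonic measure after sweeping. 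Assembling this decomposition and checking that each piece annihilates $A(\overline\Omega)$ — using peak points on $\partial\Omega$ coming from the Dirichlet-algebra property established in Lemma~\ref{ABS1}'s proof — is where the real work lies; the rest is bookkeeping and the elementary lemmas already recorded.
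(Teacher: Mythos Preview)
Your duality strategy for the space inclusion is genuinely different from the paper's argument, and the gap you flag is real and, as sketched, not closed. You want to show that every $g\in A^{q}(K,\mu)^{\perp}$ satisfies $(g\mu)|_{\overline{\Omega}}\perp A(\overline{\Omega})$. But $g\mu\perp A(K)$ is a \emph{global} orthogonality on $K$: restricting to $\overline{\Omega}$ discards the contribution of $g\mu$ on $K\setminus\overline{\Omega}$, and there is no reason the truncated measure should annihilate the strictly larger algebra $A(\overline{\Omega})\supsetneq A(K)|_{\overline{\Omega}}$. Your proposed use of the Abstract F.\ and M.\ Riesz decomposition and sweeping (as in Lemma~\ref{ABS1}) only controls the \emph{boundary} piece $g\mu|\partial\Omega$ relative to $\omega_{\Omega}$; it says nothing about the interior piece $g\mu|\Omega$, and more importantly it does not explain why the restricted measure should kill a function in $A(\overline{\Omega})$ that has no extension to $A(K)$. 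The sentence ``annihilates $R(K)$ hence $A(\overline{\Omega})$ after sweeping'' is exactly the unproved step.

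The paper avoids duality altogether and argues constructively. Given $r\in A(\overline{\Omega})$, it extends $r$ by zero off $\overline{\Omega}$; the crucial observation is that this extended function lies in $H^{\infty}(G)$ for a carefully built open set $G$ (namely $\Omega$ together with simply connected hulls $G_{i}$ of the other components $\Omega_{i}$ that do not meet $\Omega$). Lemma~\ref{l:key} then supplies a \emph{bounded} sequence $\{q_{m}\}\subset A(G)$ converging pointwise to $r$ on $G$; Proposition~\ref{p:ABS2} ($\mu|\partial K\ll\omega$) upgrades this to weak-star convergence in $L^{\infty}(\mu)$, and the standard ``weak-star closure $\subset$ weak closure $=$ norm closure for convex sets'' yields $r\in A^{q}(\overline{K^{\circ}},\mu)=A^{q}(K,\mu)$. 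So the role of Lemma~\ref{l:key} here is not, as you suggest, to feed an annihilator argument, but to produce approximants directly.

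For the $abpe$ inclusion you are making it harder than necessary. Once the space inclusion holds, take $y\in A(\overline{\Omega})$, extend by zero, and pick $q_{n}\in A(K)$ with $q_{n}\to y$ in $L^{q}(\mu)$. The existing $abpe$ bound gives $q_{n}\to y$ uniformly on a small disk $D_{b}\subset U$, whence $|y(a)|\le d\,\|y\|_{L^{q}(\mu)}$. Since $y$ vanishes off $\overline{\Omega}$, one has $\|y\|_{L^{q}(\mu)}=\|y\|_{L^{q}(\tau)}$ \emph{identically}; no Harnack or representing-measure estimate is needed, and indeed such an estimate would only bound $|y(\lambda)|$ in terms of $\omega_{\Omega}$, not $\tau$.
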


\begin{proof}
We first assume that $A^{q}(K,\mu)$ is pure. Let
$\{\Omega_{i}\}_{i=0}^{\infty}$ be the collection of all the
components of $K^{\circ}$. Without loss of generality, let
$\Omega_{0} = \Omega$. Suppose $h\in A^{q}(\overline{\Omega}, \tau)$
and choose a sequence $\{r_{n}\}$ in $A(\overline{\Omega})$ such
that $r_{n}\rightarrow h $ in $L^{q}(\tau)$. Let $\omega = \sum_{i}
\frac{1}{2^{i}} \omega_{\Omega_{i}}$. Fix a function $r_{n}$. Extend
both $h$ and $r_{n}$ to be  functions on the whole
 plane by defining their  values to be
zero off $\overline{\Omega}$. We claim that  there exists a sequence
 $\{q_{n}\}$ in $A(K^{\circ})$  such that it weak-star converges to $r_{n}$ in
$L^{\infty}(\omega)$.

For each $i\geq 1$, let $G_{i}$ be the bounded simply connected
domain that contains $\Omega_{i}$ and whose boundary is a component
of $\partial \Omega_{i}$. Let $G$ be the union of $\Omega$ with
those domains $G_{i}$ that do not intersect $\Omega$. Then all but
finitely many components of $G$ are simply connected domains and
each component of $G$ is multi-nicely connected.
 By Lemma~\ref{l:key},
$A(G)$ is boundedly pointwise dense in $H^{\infty}(G)$. Thus, there
exists a bounded sequence $\{q_{m}\}$ in $A(G)$ so that it pointwise
converges to $r_{n}$ on $G$. Now for given $\epsilon > 0$, let
$f\in L^{1}(\omega)$.  Then
\[ |\int_{\partial K} f(r_{n}-q_{m}) d\omega | \leq
 |\int_{\cup_{o}^{k} \partial \Omega_{i}} f(r_{n}-q_{m}) d\omega |
+ \frac{\epsilon}{2} \hspace{.05in} \mbox{for all} \hspace{.05in} m
\]
whenever $k$ is sufficiently  large.
 Observe that $\{q_{m}\}$ weak-star converges to $r_{n}$ in
$L^{\infty}(\omega_{\Omega_{i}})$ for each $i$.  Thus,
\[ |\int_{\partial K} f(r_{n}-q_{m}) d\omega | \leq \frac{\epsilon}{2}
+ \frac{\epsilon}{2} \hspace{.05in}\mbox{when}
 \hspace{.05in}m \hspace{.05in}\mbox{is sufficiently large.}
\]
Hence, for each $ f\in L^{1}(\omega) $ we have
\[ \lim_{n\rightarrow \infty} \int_{\partial K} f(r_{n}-q_{m}) d\omega
= 0.
\]
That is, $\{q_{m}\}$ weak-star converges to $r_{n}$ in
$L^{\infty}(\omega)$. This proves the claim.

Next we show that $r_{n}$ belongs to the weak-star closure
 of $A(\overline{K^{\circ}})$ in $L^{\infty}(\mu)$.
By Proposition~\ref{p:ABS2}, $\mu|\partial K \ll \omega$. Thus
\[ \lim_{m\rightarrow} \int_{\partial K} f q_{m} \hspace{.05in}d\mu = \int_{\partial K}
f r_{n} d \mu, \hspace{.05in}f \in L^{1}(\mu).
\]
Since $\{q_{m}\}$ is bounded and pointwise converges to $r_{n}$ on
$K^{\circ}$, it follows by the bounded convergence  theorem that
\[ \lim_{n\rightarrow } \int fq_{m} \hspace{.05in}d\mu = \int f r_{n} \hspace{.05in}d\mu, \hspace{.05in}
f \in L^{1}(\mu).
\]
Therefore, $r_{n}$ belongs the weak-star closure of
$A(\overline{K^{\circ}})$ in $L^{\infty}(\mu)$. But this closure is
contained in the weak closure of
 $A(\overline{K^{\circ}})$ in $L^{q}(\mu)$.
Because a convex set is norm-closed if and only if it is weakly
closed in $L^{q}(\mu)$, we have $r_{n}$ is in
$A^{q}(\overline{K^{\circ}},\mu)$.

Now for each $n \geq 1$,  choose $x_{n} $ in
 $A(\overline{K^{\circ}})$ such that
\[ \|r_{n}- x_{n} \|_{A^{q}(\overline{K^{\circ}},\mu)}| \leq \frac{1}{n}.
\]
Then
\begin{align*}
\|x_{n}-h\|_{L^{q}(\mu)} & = \|x_{n}-r_{n}\|_{L^{q}(\mu)} +
 \|r_{n}-h\|_{L^{q}(\mu)}  \\ &
\leq \frac{1}{n} + \|r_{n}-h\|_{L^{q}(\tau)} \rightarrow 0,
\hspace{.07in} \mbox{as}\hspace{.05in}\rightarrow \infty.
\end{align*}
  Thus we have that
$h\in A^{q}(\overline{K^{\circ}},\mu)$. Because  $h$ is an arbitrary
element in $A(\overline{\Omega},\tau)$, we conclude that
$A^{q}(\overline{\Omega},\tau) \subset
A^{q}(\overline{K^{\circ}},\mu).$ Since $A^{q}(K,\mu)$ is pure and
since $\mu$ is supported on $\overline{K^{\circ}}$ (for
$\mu|\partial K \ll \omega$ and $\omega$ is supported on $ \partial
K^{\circ}$),
we have that
 \[A^{q}(K,\mu) = A^{q}(\overline{K^{\circ}},\mu).\]
 Hence
$A^{q}(\overline{\Omega},\tau) \subset A^{q}(K,\mu)$.

Now let $b \in U$. Then $b$ is an $abpe$ for $A^{q}(K,\mu)$ and thus
there exists $d>0$ and a small open disk
 $D_{b} \subset U$
such that for all $r\in A(K)$
\[ |r(a)| \leq d\{\int |r|^{q} d \mu \}^{\frac{1}{q}}, \hspace{.05in}
a \in D_{b}.
\]
Let $y \in A(\overline{\Omega})$ and extend $y$ to be zero off
$\overline{\Omega}$. Then $y \in A^{q}(K,\mu)$ and so there is a
sequence $\{q_{n}\}$ in $A(K)$ so that it converges to $y$ in
$L^{q}(\mu)$. Then $\{q_{n}\}$ converges to $y$ uniformly on
$D_{b}$. Hence, it follows by the expression above that for all
$y\in A(\overline{\Omega})$
\[ |y(a)| \leq d \{ \int |y|^{q} d\tau \}^{\frac{1}{q}},\hspace{.05in}
a\in D_{b}.
\]
Thus $ a\in \nabla A^{q}(\overline{\Omega},\tau)$. By the definition
of $abpe$, $U\subset  \nabla A^{q}(\overline{\Omega},\tau)$.

If $A^{q}(K,\mu)$ is not pure, let $\mu=\mu_{0}+\mu_{1}$ be the
decomposition so that $A^{q}(K,\mu) = L^{q}(\mu_{0}) \oplus
A^{q}(K,\mu_{1})$ and $A^{q}(K,\mu_{1})$ is pure. Then
\[ \nabla A^{q}(K,\mu) \supset \nabla A^{q}(K,\mu_{1}) \supset U.
\]
So the conclusion of the lemma follows.

\end{proof}

A function $f$ analytic at $\infty$ can be written as a power series
of the local coordinate $\frac{1}{z-z_{0}}$ at $\infty$:
\[ f(z)= f(\infty)+\frac{a_{1}}{z-z_{0}}+\frac{a_{2}}{(z-z_{0})^{2}}
+ ...  .\] The coefficient $a_{1}$ is called the derivative of $f$
at $\infty$ and is denoted by $f^{'}(\infty)$. It is easy to see
that $f^{'}(\infty) = \lim_{z\rightarrow \infty} z(f(z)-f(\infty))$.
Define $\beta(f,z_{0}) = a_{2}$.

The next lemma is elementary.

\begin{lem} \label{l:elem}
Let $\delta > 0$ and let $a\ in $ {\bf C}. Let $B(a,\delta)= \{z:
|z-a| \leq \delta \}$. If $f$ is a bounded analytic function on {\bf
C}$_{\infty}\setminus B(a,\delta)$, then $ |f^{'}(\infty)| \leq
\delta \|f\|_{\infty}$ and $|\beta(f,a)| \leq
\delta^{2}\|f\|_{\infty}$.
\end{lem}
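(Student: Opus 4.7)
The plan is a direct application of the Cauchy/Laurent coefficient estimates at $\infty$. Since $f$ is bounded and analytic on $\mathbf{C}_{\infty}\setminus B(a,\delta)$, it is in particular analytic on the exterior neighborhood of $\infty$ given by $\{|z-a|>\delta\}\cup\{\infty\}$, so it admits a convergent Laurent expansion
\[
f(z) \;=\; f(\infty) + \frac{a_{1}}{z-a} + \frac{a_{2}}{(z-a)^{2}} + \cdots
\]
valid for all $z$ with $|z-a|>\delta$. By the definitions stated just before the lemma, $a_{1}=f'(\infty)$ and $a_{2}=\beta(f,a)$, so the task reduces to bounding these two specific Laurent coefficients in terms of $\|f\|_{\infty}$ and $\delta$.

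For any radius $r>\delta$, I would integrate term-by-term on the circle $|z-a|=r$, using uniform convergence of the Laurent series on that circle together with the fact that $\oint_{|z-a|=r}(z-a)^{-k}\,dz$ vanishes unless $k=1$. This recovers the standard coefficient formulas
\[
a_{1} \;=\; \frac{1}{2\pi i}\oint_{|z-a|=r} f(z)\,dz, \qquad a_{2} \;=\; \frac{1}{2\pi i}\oint_{|z-a|=r} (z-a)\,f(z)\,dz.
\]
Applying the ML-inequality on the circle of radius $r$ immediately produces $|a_{1}|\leq r\,\|f\|_{\infty}$ and $|a_{2}|\leq r^{2}\,\|f\|_{\infty}$, and letting $r\downarrow \delta$ yields the stated bounds $|f'(\infty)|\leq \delta\,\|f\|_{\infty}$ and $|\beta(f,a)|\leq \delta^{2}\,\|f\|_{\infty}$.

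I do not anticipate any genuine obstacle: the lemma is precisely the $\infty$-analogue of the Cauchy coefficient estimate, and the paper itself flags it as elementary. The only mild point worth noting is that $f$ remains analytic and bounded by the same constant $\|f\|_{\infty}$ on the open set $\{|z-a|>\delta\}\cup\{\infty\}$, which legitimises letting $r$ decrease to $\delta$ in both inequalities.
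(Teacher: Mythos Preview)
Your argument is correct and is exactly the standard Cauchy estimate one would expect; the paper itself does not supply a proof at all, merely remarking that the lemma is elementary. There is nothing to compare beyond noting that your write-up fills in what the paper leaves to the reader.
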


\vspace{.05in} \noindent{\em Thomson's Scheme.}~\footnote{In this
paper, we don't directly use this scheme. But we need the concept of
light and heavy points and results related to them
(Lemma~\ref{l:cru} and Theorem~\ref{t:light}).} Now,  we introduce
an approximation scheme originally developed by J.
Thomson in \cite{jm}.

For an integer $k \geq 1$, let $\{S_{kp} \}_{p=1}^{\infty}$ be the
collection of all open squares with sides $2^{-k}$, parallel to the
coordinate axes and corners at the points whose coordinates are both
integral multiples of $2^{-k}$. A finite sequence $\{S_{i}
\}_{i=1}^{n}$ of squares is called a path of squares if the interior
of $\cup \overline{S}_{i}$ is connected. In this case we say $S_{1}$
and $S_{n}$ are joined by a path of squares.  The collection of
$\{S_{kp} \}_{p=1}^{\infty}$ is called the $k$-th generation of
squares.

Let $\phi$ be a nonnegative function in $L^{1}($\mbox{Area}$)$. An
open square $S$ is said to be {\em light} with respect to $\phi$ if
$\int_{S}\phi \hspace{.05in} d \hspace{.02in}\mbox{Area} \leq
\mbox{[Area}(S)]^{2}$.

Now we begin with the scheme. Let $a \in $ {\bf C} and let $S$ be a
square in   $\{S_{kp} \}_{p=1}^{\infty}$ such that $a \in \overline
S$. Color $S$ yellow and let $\Gamma_{k} =  \partial S$. We then
move to the  squares in the next generation. First, color green
every light square in $\{S_{(k+1)p} \}_{p=1}^{\infty}$  that lies
outside $\Gamma_{k}$ and has a side on $\Gamma_{k}$. Second, color
green every light square that can be joined  to a green square in
the first step by a path of light squares in $\{S_{(k+1)p}
\}_{p=1}^{\infty}$. Now if there is an unbounded green path (that is
made up by infinitely many squares), then this coloring process
ends. Otherwise, let $\gamma_{k+1}$ be the boundary of the
polynomially convex
 hull~\footnote{
The polynomially convex hull of a compact subset $K$ in the plane is
defined as the union of $K$ and all the bounded components of {\bf
C}$\setminus K$.} of the union of $\Gamma_{k}$ and the closure of
the green squares.  We then color red every square $S$ in
$\{S_{(k+1)p} \}_{p=1}^{\infty}$ if $S$ is outside $\gamma_{k+1}$
and $S$ has a side on $\gamma_{k+1}$. After that, color a square $T$
yellow if $T$ is outside $\gamma_{k+1}$ and $T$ has
 no side lying on $\gamma_{k+1}$
and the distance from $T$ to some red square in $\{S_{(k+1)p}
\}_{p=1}^{\infty}$  is less or equal to $(k+1)^{2} 2^{-(k+1)}$. Now
let $\Gamma_{k+1}$ be the boundary of the polynomially convex hull
of the union and the closure of the colored squares in the
$(k+1)$-th generation. To this step, the coloring process in
$(k+1)$-th generation of squares is completed.

Next we continue this process to the $(k+2)$-th generation of
squares and keep this process to all higher generations unless there
is an unbounded green path in the coloring scheme in some $(m+l)$-th
generation $(l \geq 1)$. We use $(\phi, a, k)$ to denote this
colored scheme.

\vspace{.05in} \noindent {\em Light and heavy points}. For a
nonnegative function $\phi \in L^{1}($\mbox{Area}$)$, a point
$\lambda $ in {\bf C}  is called light (with respect to $\phi$) if
there exists $\delta > 0$ such that  for each
 $\delta_{0} \leq \delta$ \[ \{z: |z-a| = \delta_{0} \} \cap
\{\mbox{all colored squares in}\hspace{.05in} (\phi,k,a)\} \neq
\emptyset,
\]
whenever $k$ is a sufficiently large integer. If a point is not
light, then it is called a heavy point.

\begin{rem}
The construction of our colored scheme is exactly the same as that
in \cite{jm}.  But the light and heavy points improved '{\em light
route to $\infty$'} and '{\em heavy barrier}' in Thomson's original
work.  Let us explain the difference: For a given $\phi$, if there
is an unbounded green path
 in the colored scheme $(\phi,k,a)$ for every $k$, it is said that there
is a sequence of light routes from $a$ to $\infty$. This is
essentially the definition of 'light' points in Thomson's paper.
Because most of the light points  for a given $\phi$ in our
definition don't have a sequence of light routes from $a$ to
$\infty$, the set of the light points is much larger than the set of
points that have a sequence of light routes from $\infty$.
\end{rem}

\vspace{.05in} The Cauchy transform of a measure (with compact
support) $\mu$ is defined as $\hat{\mu}(z) = \int \frac{
d\mu(w)}{w-z}$. Because $\frac{1}{z}$ is local integrable with
respect to the  $\mbox{area}$ measure, it  follows that
$\hat{\mu}(z)$ is defined everywhere except a subset of zero area.

\vspace{.05in}

The following is a practically useful result coming out of our light
point concept  \cite[Theorem 2.4]{qiu1}.

\begin{thm} \label{t:light}
Let $\mu$ be a finite measure with compact support. Let $V$ be an
open subset in {\bf C}.  If  every point in $V$ is light with
respect to
 $|\hat{\mu}|$, then $|\mu|(V) = 0$.
\end{thm}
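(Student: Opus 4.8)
The plan is to reduce the statement to the coloring-scheme machinery from \cite{qiu1}, which is exactly what the light/heavy dichotomy was designed to control. Fix an open set $V$ all of whose points are light with respect to $\phi=|\hat\mu|$, and suppose toward a contradiction that $|\mu|(V)>0$. First I would localize: choosing a small closed disk $B=B(a,\delta)\subset V$ on which $|\mu|$ is positive, it suffices to derive a contradiction from the lightness of every point of $B$ together with $|\mu|(B)>0$. Here I would use that $\hat\mu$ is defined a.e.\ $[\text{Area}]$ and is in $L^1_{\mathrm{loc}}(\text{Area})$, so that the coloring scheme $(\phi,k,a)$ makes sense and the notion of light square (an integrability bound against $[\mathrm{Area}(S)]^2$) is available at every generation.

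The core step is the standard Thomson-type estimate, adapted to the light-point setting. Because $a$ is a light point, for all sufficiently large $k$ every circle $\{|z-a|=\delta_0\}$ with $\delta_0\le\delta$ meets a colored square of the scheme $(\phi,k,a)$; this prevents the green region from forming an unbounded path and forces the barriers $\Gamma_k$ to close up around $a$, so one obtains a nested sequence of curves $\Gamma_k$ converging to $a$, each lying in a region where $\phi$ is small in the averaged sense forced by lightness of all squares involved. Along such curves I would run the Cauchy-integral argument: for $f$ analytic and bounded off $B$ one applies Lemma~\ref{l:elem} to bound $|f'(\infty)|$ and $|\beta(f,a)|$, and then estimates $\hat{(\chi_B\mu)}$ against $f$ by integrating over the colored region between successive generations, using the light-square inequality $\int_S\phi\,d\,\mathrm{Area}\le[\mathrm{Area}(S)]^2$ to sum the contributions geometrically. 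The upshot is that the part of $\mu$ carried by $B$ must have vanishing Cauchy transform in a neighborhood of $a$ in a suitably strong sense, which—combined with the fact that the set of $a$ with $\int \frac{d|\mu|}{|z-a|}<\infty$ has full area—forces $\chi_B\mu$ to annihilate a separating family of rational kernels and hence to be the zero measure, contradicting $|\mu|(B)>0$.

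The hard part will be the bookkeeping in this Cauchy-transform-along-barriers estimate: one must verify that the curves $\Gamma_k$ produced by lightness of \emph{all} points of $B$ (not merely the existence of a single light route to $\infty$, which is the weaker classical hypothesis) genuinely shrink to each point and that the error terms accumulated over the colored squares in generations $k,k+1,\dots$ are summable. This is precisely the point at which the improved definition of light point pays off, and it is where the argument in \cite{qiu1} does the real work; here I would simply cite \cite[Theorem 2.4]{qiu1} for this technical core and record that the present statement is its clean consequence. Everything else—localization, passage to $\chi_B\mu$, and the final separation argument via the a.e.\ finiteness of $\int\frac{d|\mu|}{|z-a|}$—is routine, as in the proof of Proposition~\ref{p:ABS2} above.
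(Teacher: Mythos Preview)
The paper does not prove Theorem~\ref{t:light} at all: immediately before the statement it says ``The following is a practically useful result coming out of our light point concept \cite[Theorem~2.4]{qiu1},'' and then simply quotes the theorem. So there is no proof in this paper to compare against; the result is imported wholesale from \cite{qiu1}.

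That said, your proposal has a genuine gap independent of this. At the point you label ``the hard part,'' you write that you would ``simply cite \cite[Theorem~2.4]{qiu1} for this technical core and record that the present statement is its clean consequence.'' But Theorem~\ref{t:light} \emph{is} \cite[Theorem~2.4]{qiu1}; you are invoking the theorem to prove itself. Everything preceding that citation is a heuristic narrative (localize to a disk, run a Cauchy-integral argument along barriers, sum light-square errors), not an argument with verifiable estimates, and you explicitly defer the only substantive step to the very result under discussion. So as written this is circular and does not constitute a proof.

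A smaller technical point: your description of the scheme is garbled. In Thomson's construction the curves $\Gamma_k$ expand outward from the initial square as $k$ increases; they do not form ``a nested sequence of curves $\Gamma_k$ converging to $a$.'' The definition of a light point $a$ is that all sufficiently small circles about $a$ meet colored squares of $(\phi,k,a)$ for large $k$; it does not by itself ``prevent the green region from forming an unbounded path'' (indeed, points with a light route to $\infty$ are light in the paper's sense). If you actually intend to reprove the result rather than cite it, you would need to go back to the construction in \cite{qiu1} and carry out the Vitushkin-type estimate that shows $\mu$ restricted to a neighborhood of a light point is annihilated; the informal outline here does not do that.
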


The above theorem  generalizes a well-known result in the theory of
uniform approximation: {\em if $\hat{\mu} = 0$ {\em a.e.} on an open
subset with respect to the area measure, then the restriction of
$\mu$ on the open subset is zero.} The following is the key lemma
 in \cite[Lemma 2.3]{qiu1}.

\begin{lem} \label{l:cru}
Let $\nu_{1}, ..., \nu_{k}$ be finite measures. Let $\phi(z) =
\max\{|\hat{\nu}_{i}(z)|: 1\leq j\leq k \}$.  If $ a$ is a  light
point with respect to $\phi$, then there is  an arbitrarily small
positive number $\delta$ such that for any $\epsilon > 0$ and
$\alpha,\beta \in \{z: |z-1|\leq 1\} $, there is a function in
$C(${\bf C}$_{\infty}$) that has the following properties: 1)
$\|f\|_{\infty} \leq C$ (a universal constant), 2) $f$ is analytic
on $\{z: |z-a| > \delta \}$, 3) $f(\infty) = 0$, 4) $f^{'}(\infty) =
\alpha \delta$, 5) $\beta(f,a) = \beta \delta^{2}$, 6) $ |\int f
d\nu_{j}|\leq \epsilon$ for all $1\leq j\leq k$.
\end{lem}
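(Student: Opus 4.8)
The plan is to reduce the construction to a single application of Thomson's coloring scheme $(\phi,a,k)$ together with the theory of light points. Since $a$ is a light point with respect to $\phi = \max_j |\hat\nu_j|$, by the definition of light points there is a $\delta > 0$ so that for every $\delta_0 \le \delta$ the circle $\{|z-a| = \delta_0\}$ meets the union of all colored squares in $(\phi,a,k)$ whenever $k$ is large. Fix such a $\delta$ (we may shrink it freely, which is why the $\delta$ in the conclusion can be taken arbitrarily small). The colored region in generation $k$ bounds a curve (or finite union of curves) $\Gamma_k$ that separates $a$ from $\infty$ and, crucially, all of whose constituent squares are \emph{light} with respect to $\phi$; this is the geometric engine that lets us simultaneously control $\int f\,d\nu_j$ for all $j$ while prescribing the behavior of $f$ at $\infty$ and at $a$.

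The main construction I would carry out is the following. Using $\Gamma_k$ as a skeleton, build $f$ as a Cauchy transform of a cleverly chosen measure (or a smooth compactly supported $\bar\partial$-correction) supported on a thin neighborhood of $\Gamma_k$, exactly as in the standard Vitushkin-type / Thomson-type localization. Arrange that $f$ is analytic off $\{|z-a|>\delta\}$ (so its singular support sits inside $B(a,\delta)$ together with the bounded region cut off by $\Gamma_k$), that $f(\infty)=0$, and that the two leading Laurent coefficients at $\infty$, namely $f'(\infty)$ and $\beta(f,a)$, take the prescribed values $\alpha\delta$ and $\beta\delta^2$; here Lemma~\ref{l:elem} supplies the a priori bounds $|f'(\infty)|\le\delta\|f\|_\infty$ and $|\beta(f,a)|\le\delta^2\|f\|_\infty$, which show these normalizations are the correct order of magnitude and force $\alpha,\beta$ to range only over $\{|z-1|\le 1\}$ (a bounded target), so that a universal $\|f\|_\infty\le C$ is attainable. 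The estimate $|\int f\,d\nu_j|\le\epsilon$ comes from writing $\int f\,d\nu_j$ as an integral of $\hat\nu_j$ against the defining measure of $f$: since that measure lives on light squares, $\int_S \phi\,d\mathrm{Area}\le [\mathrm{Area}(S)]^2$ on each such square $S$, and summing these bounds over the squares crossed by $\Gamma_k$ gives a total that tends to $0$ as $k\to\infty$; choosing $k$ large enough yields the $\epsilon$.

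I expect the main obstacle to be the bookkeeping that reconciles three competing demands at once: (i) keeping $\|f\|_\infty$ below a single universal constant $C$ independent of $\phi$, $a$, $\delta$, $\epsilon$ and the $\nu_j$; (ii) hitting the prescribed values of $f'(\infty)$ and $\beta(f,a)$ exactly rather than approximately; and (iii) making $|\int f\,d\nu_j|$ small for \emph{all} $j$ simultaneously. The first two are handled by a dimension count: the Laurent data at $\infty$ is two complex numbers, and one adds to the basic localized function a two-parameter family of small corrections (e.g. suitable combinations of shifted Cauchy kernels near $a$, each of norm $O(1)$) and inverts a $2\times 2$ system whose matrix is uniformly non-degenerate thanks to Lemma~\ref{l:elem}; since $\alpha,\beta$ lie in a fixed bounded set the corrections stay $O(1)$, preserving the universal norm bound. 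The third demand is where the light-square hypothesis does all the work: the key point, which I would isolate as the crux of the argument, is that the total $\phi$-mass along the colored barrier $\Gamma_k$ is $o(1)$ as $k\to\infty$ because every square in the barrier is light and the barrier has controlled combinatorial width (at most a bounded number of squares thick per generation, by the construction with the $(k+1)^2 2^{-(k+1)}$ thresholds). Once that quantitative smallness is in hand, all six properties follow by choosing $k$ sufficiently large and then fine-tuning the two-parameter correction.
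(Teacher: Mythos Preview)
The paper does not prove this lemma at all; immediately before the statement it says ``The following is the key lemma in \cite[Lemma 2.3]{qiu1}'' and then simply states Lemma~\ref{l:cru} without proof, moving on directly to the Vitushkin covering. So there is no in-paper argument to compare your proposal against; the proof lives in the author's earlier paper \cite{qiu1} (which in turn adapts Thomson's construction in \cite{jm}).

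As for your outline itself: the broad strategy---build $f$ by Vitushkin-type localization over the colored scheme, write $\int f\,d\nu_j$ as an area integral of $\hat\nu_j$ against the $\bar\partial$-data, and adjust two free parameters to hit $f'(\infty)$ and $\beta(f,a)$ exactly---is indeed how the argument in \cite{qiu1} and \cite{jm} goes. But two points in your sketch are inaccurate and would cause trouble if you tried to fill in details. First, you assert the barrier is ``at most a bounded number of squares thick per generation''; it is not. The yellow buffer in generation $m$ has width $m^{2}2^{-m}$, i.e.\ roughly $m^{2}$ squares across, and the number of colored squares grows with $m$. The smallness of $\sum_{S}\int_{S}\phi\,d\mathrm{Area}$ comes not from a bounded count but from the strength of the light-square inequality $\int_{S}\phi\,d\mathrm{Area}\le[\mathrm{Area}(S)]^{2}=2^{-4m}$, which beats the polynomial growth in the number of squares. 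Second, you treat all colored squares as light, but in Thomson's scheme only the \emph{green} squares are light by definition; red and yellow squares need not be, and the construction in \cite{jm,qiu1} exploits the specific roles of the three colors (green for the Cauchy-transform support, red/yellow for geometric separation) in a way your sketch does not distinguish. Your proposal is a reasonable high-level summary, but these two points would have to be corrected before it could be turned into an actual proof.
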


\vspace{.05in} \noindent {\em Vitushkin covering}. For a natural
number $k$, let $\{S_{kl} \}_{l=1}^{\infty}$ is the $k$-th
generation of squares with sides of length $2^{-k}$. For each
$S_{kl}$, let $F_{kl}$ be the square obtained by enlarging $S_{kl}$
$\frac{5}{4}$ times. The collection $\{F_{kl} \} $ is called a
regular Vitushkin covering of the plane. We suppress $k$ and let
$z_{l}$ be the center  of $F_{l}$. Then there exists a $C^{1}$
partition of unity $\{\phi_{l}\}$ subordinate to $\{F_{l}\}$ with
$\|\mbox{grad} \phi_{l}\| \leq 100\hspace{.05in} 2^{k}$ such that
\[ \sum_{l}^{\infty} \min(1, \frac{2^{-3k}}{|z-z_{l}|^{3}}) \leq
C \min\{1, \frac{2^{-k}}{dist(z, \cup_{l} F_{l})} \},
\hspace{.3in}z\in \mbox{\bf C}.
\]
One may consult \cite{conw} for a proof of the inequality.
\begin{lem} \label{l:scheme}
Suppose that each component of $K^{\circ}$ is multi-nicely connected
and the harmonic measures of the components are mutually singular.
Let $U$ be a component of $\nabla A^{q}(K,\mu)$ and let $f \in
H^{\infty}(U)$. Then there exists a function $h \in A^{q}(K,\mu)\cap
L^{\infty}(\mu)$ such that $\hat{h}(z) = f(z)$ on $U$ and $h = 0$
off\hspace{.05in} $\overline{U}$.

\end{lem}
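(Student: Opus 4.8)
The plan is to reduce the general case to the pure case (as in Lemma~\ref{l:key2}) and then, for the pure case, build the desired $h$ by combining two ingredients: first, the fact from Lemma~\ref{l:key} that $H^{\infty}$ of a (multi-nicely connected) domain is the bounded pointwise limit of an algebra of boundary-continuous functions, which lets us realize $f$ as a weak-star limit of functions on $\overline{\Omega}$; and second, the Thomson-type machinery (Lemma~\ref{l:cru}, Theorem~\ref{t:light}, the Vitushkin covering) which is what one needs to push functions living on the abpe-set $U$ back into $A^{q}(K,\mu)$ while killing them off $\overline{U}$.

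First I would assume $A^{q}(K,\mu)$ is pure; if not, split $\mu = \mu_0 + \mu_1$ with $A^{q}(K,\mu_1)$ pure, note $\nabla A^{q}(K,\mu_1) \supset U$ by the same reasoning used at the end of Lemma~\ref{l:key2}, solve the problem over $\mu_1$, and extend by zero on $\mathrm{supp}(\mu_0)$. In the pure case, let $\Omega$ be the component of $K^{\circ}$ containing $U$ and set $\tau = \mu|\overline{\Omega}$. By Lemma~\ref{l:key2} we have $A^{q}(\overline{\Omega},\tau) \subset A^{q}(K,\mu)$ and $U \subset \nabla A^{q}(\overline{\Omega},\tau)$, so it is enough to produce $h \in A^{q}(\overline{\Omega},\tau) \cap L^{\infty}(\tau)$ with $\hat h = f$ on $U$ and $h = 0$ off $\overline U$; extending by zero then lands us in $A^{q}(K,\mu)$. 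Now $U$ is itself a component of $\nabla A^{q}(\overline{\Omega},\tau)$ (it is open, contained in $\Omega$, and cannot be enlarged past $\partial\Omega$ since abpe's sit inside $K^{\circ}$ away from slits), and by Proposition~\ref{p:ABS2} and Lemma~\ref{ABS1} the measure $\tau$ is absolutely continuous with respect to harmonic measure on $\partial\Omega$. So the problem is genuinely a one-component problem, which is exactly the setting of Thomson's polynomial argument but over $A(\overline\Omega)$ rather than polynomials.

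The heart of the construction, and the step I expect to be the main obstacle, is the approximation scheme. Following Thomson (now with $A(\overline\Omega)$ in place of polynomials, using that $A^{q}(\overline\Omega,\tau)$ is a module over $A(\overline\Omega)\cap L^\infty$ by Lemma~\ref{l:elem1}), I would: fix a point $a_0 \in U$ and work with the kernel functions $k_\lambda$; for an arbitrary annihilating measure $\nu \perp A^{q}(\overline\Omega,\tau)$ one shows, via the light/heavy dichotomy, that $\nu$ concentrates where it "should," and then uses Lemma~\ref{l:cru} to manufacture, at a light point, functions $g$ with prescribed values of $g(\infty)$, $g'(\infty)$ and $\beta(g,a)$ that are nearly annihilated by a finite family of Cauchy transforms of relevant measures. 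Splicing these local functions together along a Vitushkin partition of unity $\{\phi_l\}$ with the stated gradient and summability bounds produces a bounded function supported near $\overline U$ whose Cauchy-transform behaviour matches $f$ on $U$; the Vitushkin estimate is what controls the error in $\|\cdot\|_{L^q(\tau)}$. Theorem~\ref{t:light} is then invoked to guarantee that the limiting function actually vanishes off $\overline U$: at every point of the complement of $\overline U$ the relevant Cauchy transform is light, so the mass there is zero.

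The delicate points are: (i) verifying that enough points of $U$ (and of the plane) are light with respect to the Cauchy transforms that arise, so that Lemma~\ref{l:cru} and Theorem~\ref{t:light} apply — this is where the improved light-point notion (the Remark after Thomson's scheme) does real work, since we do not have light routes to $\infty$ available; (ii) bookkeeping the convergence so that the spliced functions stay uniformly bounded and converge in $L^q(\tau)$ to a genuine element of $A^{q}(\overline\Omega,\tau)\cap L^\infty(\tau)$ with $h = 0$ off $\overline U$ and $\hat h = f$ on $U$; and (iii) checking that replacing polynomials by $A(\overline\Omega)$ costs nothing — this is legitimate because $\tau \ll \omega_\Omega$, $\Omega$ is multi-nicely connected, and the algebra $A(\overline\Omega)$ separates points and contains the functions (like $z$ and, via Lemma~\ref{l:key}, bounded pointwise approximants of $H^\infty(\Omega)$) that Thomson's argument feeds into the scheme. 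Once the scheme is run, the identity $\hat h = f$ on $U$ follows from matching the functional $f \mapsto f(\lambda)$ against $k_\lambda$ for $\lambda$ in a disk inside $U$, exactly as in the computation at the start of Lemma~\ref{l:key2}.
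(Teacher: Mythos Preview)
Your overall strategy matches the paper's: reduce to the pure case, pass to the single component $\Omega$ via Lemma~\ref{l:key2}, and run the Thomson--Vitushkin scheme there. The paper inserts one further reduction you omit --- it conformally maps $\Omega$ onto a circular domain $W$, transfers $\tau$ to $\nu = \tau\circ v^{-1}$, runs the scheme over $\overline W$ (where the hypotheses ``$K^\circ$ connected, finitely connected, $K=\overline{K^\circ}$'' hold cleanly), and pulls back --- but working directly over $\overline\Omega$ as you propose is essentially equivalent.

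That said, your description of how the scheme actually runs is garbled in several places that would derail a write-up. First, lightness is needed precisely at points of $\partial U$, not at ``enough points of $U$ (and of the plane)'': one extends $f\in H^\infty(U)$ by zero off $U$, Vitushkin-localizes to $f_l = T_{\phi_l}f$, and only the pieces with $F_l\cap\partial U\neq\emptyset$ survive; for each such $l$ one picks $a\in F_l\cap\partial U$, shows $a$ is light with respect to $\Phi=\max_{j\le k}|\widehat{x_j\mu}|$ (because $\widehat{x_j\mu}=0$ on $K^\circ\setminus\partial U$ and off $K$), and applies Lemma~\ref{l:cru} there to build a corrector $h_l$ with $(f_l-h_l)$ having a triple zero at $\infty$. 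Second, Theorem~\ref{t:light} is not what guarantees $h=0$ off $\overline U$; that comes for free because $y_k=\sum_l h_l = f+\sum_l(h_l-f_l)$ converges pointwise to $f$ on $\mathbf{C}\setminus\partial U$ and $f\equiv 0$ there. (Theorem~\ref{t:light} is used later, in Lemma~\ref{l:unique}, not here.) Third, the $y_k$ do not converge in $L^q(\tau)$: the Vitushkin summability estimate gives only a uniform $L^\infty$ bound, and one extracts a weak-star limit $h\in L^\infty(\mu)$, then uses $|\int y_k x_j\,d\mu|\le 1/k$ to conclude $h\perp \{x_j\}$ and hence $h\in A^q(K,\mu)$. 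Finally, the verification $\hat h=f$ on $U$ requires a small additional trick when $\mathrm{supp}(\mu)$ has no interior in $U$ (the paper augments $\mu$ by area measure on $\overline U$), not merely testing against kernel functions.
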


\begin{proof}
First, we assume that $K$ is finitely connected, $K=\overline
K^{\circ}$ and $K^{\circ}$ is connected. Let $\{x_{j}\}$ be a
countable dense subset of $A^{q}(K,\mu)^{\perp}$. For an integer
$k\geq 1$, let $\Phi(z)= \mbox{max}\{|(\widehat{x_{j}\mu})(z)|:
j\leq k\}$.
 Let $\{F_{l}\}$ be the regular Vitushkin
covering of squares with sides of length $\frac{5}{4} 2^{-k}$ and
center $z_{l}$.  Then there is a
 $C^{1}$ partition $\{\phi_{l}\}$ subordinate to the
covering $\{F_{l}\}$. For each $l$, let
 $f_{l}= T_{\phi_{l}} f = \frac{1}{\pi} \int
 \int \frac{f(z) - f(w)}{z-w} \frac{\partial
\phi_{l}}{\partial \overline{z}} \hspace{.05in}d\mbox{ Area}. $ Then
$f_{l}$ is analytic off $F_{l}$, $f_{l}(\infty)=0$,
 and
$\|f_{l}\|_{\infty} \leq 2\|\mbox{grad} \phi_{l}\|\mbox{diam}
[supp(\phi_{l}] \sup\{|f(z)-f(w)|: z,w \in supp(\phi_{l}\} \leq
C_{0}, $ where $C_{0}$ is a  positive universal constant.

Let $l$ be such that $F_{l} \cap \partial U \neq \emptyset$
 and let $a \in \partial U\cap F_{l}$.  We claim that $a$ is a light
point with respect to $\Phi$. In fact, first  let $a\in K^{\circ}$.
Set $V=K^{\circ}\setminus \partial U$,
it follows by Lemma 3.3 in \cite{qiu1} that 
$|(\widehat{x_{j}\mu})(z)|=0$ on $V$ for each $j$ and thus $\Phi=0$
on $V$. By Lemma 3.7 in \cite{qiu1}  we see $a$ is light.\footnote{
By combining the proof Lemma 3.3 in \cite{qiu1} and that of Theorem
4.8 in \cite{jm}, we can extend Lemma 3.3 in \cite{qiu1} so that it
has the conclusion that Theorem 4.8 in \cite{jm} has (so it can be
applied to $\sum_{j\leq k} |x_{j}\mu|$)). From this, we also see any
point in $K^{\circ}$ that is not  an $abpe$ must be light.}
Now suppose  that $a \in \partial K$. Since $\nu_{j} \perp A(K)
\supset R(K)$, we have  that  $\hat{\nu}_{j} = 0$ off $K$. Thus,
again  it follows from  Lemma 3.7 in \cite{qiu1} that $a$ is also a
light point. This proves the claim.

Next let $d_{l} = \frac{1}{2} 2^{-k}$ and let  $B(a, d_{l})$ be open
disk having radius $d_{l}$ and the center at  $a$. Applying
Lemma~\ref{l:elem} to $B(a, d_{l})$,  it follows that
$|f_{l}^{'}(\infty)| \leq C_{0} d_{l} \hspace{.05in} \mbox{and}
\hspace{.05in} \beta(f_{l},z_{l}) \leq C_{0} d_{l}^{2}. $ Let
$\alpha=\frac{f_{l}^{'}(\infty)}{C_{0}d_{l}}$ and $\beta=
\frac{\beta(f_{l},z_{l}) }{C_{0}d_{l}^{2}}$. Then $|\alpha| \leq 1$
and $|\beta| \leq 1$.  Let  $n$ be the number of those $F_{l}$'s
  for which $F_{l} \cap \partial U \neq \emptyset$.
 Then $n$ is a positive integer.  Because $a$ is a light point,
applying  Lemma~\ref{l:cru}  with $\alpha, \beta, \frac{1}{nk
C_{0}}$,  then there exists a function
 $g_{l}$ in $C(${\bf C}$_{\infty})$ that is analytic
 off $\overline{B(a,d_{l})}$ and satisfies:
1) $\|g_{l}\|\leq C_{1}$ ($C_{1}$ is a  positive universal
constant), 2) $g_{l}(\infty)=0$, 3) $g_{l}^{'}(\infty)=\alpha
d_{l}$, 4) $\beta(g_{l}, z_{l})= \beta d_{l}^{2}$, 5) $|\int g_{l}
x_{j} \hspace{.05in}d\mu| \leq \frac{1}{nk C_{0}}$ for all $ 1 \leq
j \leq k$.
Set $h_{l} = C_{0} g_{l}$. Then $h_{l}$ has the following
properties: 1) $  \|h_{l}\| \leq C_{0}C_{1}$, 2) $h_{l}$ is analytic
off $\overline{B(a,d_{l})}$, 3) $|\int h_{l}x_{j} d \mu| \leq
\frac{1}{nk}$ for $j\leq k$,
4) $h_{l}-f_{l}$ has a triple zero at $\infty$, that is,
$(h_{l}-f_{l})(\infty)=0$, $(h_{l}-f_{l})^{'}(\infty)=0$ and
$\beta(h_{l}-f_{l}, z_{l}) = 0$.

Let $\delta_{l}$ be the length of a side of $F_{l}$. Since $a\in
F_{l}$, it is evident that $B(a,d_{l})$ is contained in the square
with center $z_{l}$ and sides of length $ 2 \delta_{l}$.
  $f_{l} - h_{l}$ is clearly
analytic off $\{z: |z-z_{l}| \leq 2 \delta_{l} \}$.
 Since $f_{l}-h_{l}$ has a triple zeros at $\infty$,
$(z-z_{l})^{3}(f_{l} - h_{l})$ is also analytic off $\{z: |z-z_{l}|
\leq 2 \delta_{l} \}$. So the maximum principle implies that
\[ |(z-z_{l})^{3}(f_{l} - h_{l})| \leq
 2^{3}\delta_{l}^{3} \|f_{l}- h_{l}\|_{\infty}
\leq C_{0}(C_{1}+1)2^{3} \delta_{l}^{3} \hspace{.2in}
\mbox{whenever}\hspace{.05in} |z-z_{l}| \geq 2 \delta_{l}.
\]
Let $C_{2} = 8C_{0}(C_{1}+1)$. Then,  for each $z$ $ |f_{l}(z) -
h_{l}(z)|  \leq \mbox{min} (C_{2}, \frac{C_{2}\delta^{3}}
{|z-z_{l}|^{3}}). $ So it follows that for every $z$ (the first sum
is taken over those $l$'s for which $F_{l} \cap \partial U \neq
\emptyset$),
\begin{align*}
\sum |f_{l}- h_{l}| & \leq \sum_{l}^{\infty} \mbox{min} (C_{2},
\frac{C_{2}\delta^{3}} {|z-z_{l}|^{3}}) = \sum_{l}^{\infty}
\mbox{min} (C_{2}, \frac{C_{2}(\frac{5}{4})^{3}2^{-3k}}
{|z-z_{l}|^{3}})  \\& \leq C_{2} (\frac{5}{4})^{3} \mbox{min}\{1,
\frac{2^{-k}} {\mbox{dist}(z, \cup F_{l})}\}.
\end{align*}
Notice that  $f$ is analytic on those $F_{l}$'s for which
 $F_{l} \cap \partial U = \emptyset$. It follows  that
\[ f_{l} = T_{\phi_{l}} f = \frac{1}{\pi} \int
 \int \frac{f(z) - f(w)}{z-w} \frac{\partial
\phi_{l}(z)}{\partial \overline{z}} \hspace{.05in}d\mbox{ Area}
 = - \frac{1}{\pi} \int
 \int \frac{\partial f(z)}{\partial \overline{z}}\frac{\phi_{l}(z)}{z-w}
 \hspace{.05in}d\mbox{ Area}  =0
\]
for those $l$'s. So there are only finitely many $f_{l}$'s that are
not zero.

Now define $h_{l} =0$ if $l$ is such that $f_{l} =0$ and set $y_{k}
= f + \sum_{l}(h_{l} - f_{l})$. Then $y_{k} = \sum_{l}h_{l}$. For
any $z$ off $\partial U$,
 it is clear that
$ \mbox{dist} (z, \cup F_{l}) \rightarrow \mbox{dist} (z,\partial U)
\hspace{.05in} \mbox{as} \hspace{.05in} k \rightarrow \infty, $ and
hence it follows from the above inequalities that $y_{k} \rightarrow
f(z)$ for each $z$ in \\    {\bf C} $\setminus
\partial U$. According to 3),  we have
$ | \int y_{k} x_{j} \hspace{.05in}d\mu| \leq \frac{1}{k},
\hspace{.05in}\mbox{for} \hspace{.05in} 1\leq j\leq k. $ Notice that
\[|y_{k}| \leq |f| + |\sum_{l}(h_{l}-f_{l})|
                 \leq \|f\|_{\infty}+ C_{2}(\frac{5}{4})^{3},
\]
so $\{y_{k}\}$ is a bounded sequence. Since the weak-star topology
on the unit ball of the dual space of a separable Banach space is
metrizable, it follows by Alaoglu's theorem that  there exists a
subsequence $\{y_{k_{j}}\}$ that weak-star converges to some
 $h\in L^{\infty}(\mu)$. According to the last
inequality above,  we have that $ \int h x_{j} \hspace{.05in}d\mu =
0 \hspace{.05in}\mbox{ for all} \hspace{.05in}j\geq 1. $
Consequently, we have that $h\in A^{q}(K,\mu)$. Because $y_{k_{j}}
\rightarrow f$ pointwise off $\partial U$ and $f = 0$ off $\overline
U$, we have $h=0$ off $\overline U$.

Finally, we show that $\hat{h} = f$ on $U$. If $supp(\mu)$ contains
an open subset of $G$, then this is easy to see this is true (since
$y_{k} \rightarrow f$ pointwise on $U$ and hence $f = h = \hat{h}$
{\em a.e.} $[\mu]$ on $G$. Because both $f$ and $\hat{h}$ are
analytic on $U$, hence $f = \hat{h}$ on $U$). Otherwise, let $G$ be
open so that $\overline{G}\subset U$ and let $\rho = \mu+
Area|\overline{U}$.
Then  $\| \cdot   \|_{\mu}$ and  $\|\cdot  \|_{\rho}$ are equivalent
norms. By the definition of $abpe$, we see that $U\subset \nabla
A^{q}(K,\mu) = \nabla A^{q}(K,\rho). $ Therefore, there exists
$f_{1} \in A^{q}(K,\rho) \cap L^{\infty}(\rho)$ such that
$\hat{f}_{1} = f$ and $f_{1} = 0$ off  $\partial U$. Now set $h=
f_{1}|supp(\mu)$.
  We show  $h$ is the desired function.
Let $\{f_{n}\} \subset A(K)$ such that $f_{n} \rightarrow f_{1}$ in
$L^{q}(\rho)$. Then $f_{n}\rightarrow h$ in $L^{q}(\mu)$. Thus,
$f_{n}\rightarrow \hat{h}$ uniformly on $G$. Since $f_{n}\rightarrow
f$ uniformly on $G$ as well, it follows that $\hat{h} = f$ on $G$.
Because $U$ is  the union of such open subsets $G$,  we conclude
that $\hat{h} = f$ on $U$.

Now we consider a general $K$ that satisfies the hypothesis of this
lemma. Let $\Omega$ be the component of $K^{\circ}$ that contains
$U$. Then the multi-nicely connectivity of $\Omega$ insures that
there is circular domain $W$  and a conformal map $v$ from $\Omega$
onto $W$ such that $v$ is almost 1-1 on $\partial \Omega$ with
respect the harmonic measure of $\Omega$. Let $\mu= \mu_{0} +\tau$
be the decomposition such that
 $A^{q}(K,\tau)$ is pure and
$A^{q}(K,\mu) = L^{q}(\mu_{0}) \oplus A^{q}(K,\tau). $ By
Proposition~\ref{p:ABS2}, $\tau|\partial \Omega$ is absolutely
continuous with respect to the harmonic measure. Extend $v$ to
$\overline \Omega$ by defining its boundary values as its
nontangential limits and set $\nu=\tau\circ v^{-1}$. It is easy to
check that $v(U)$ is a component of $\nabla A^{q}(\overline W,\nu)$
and there is $h_{1}\in A^{q}(\overline W,\nu)$ such that
$\hat{h}_{1} =f\circ v^{-1}$.
Set $h=h_{1} \circ v^{-1}$. Then, $h\in A^{q}(\overline
\Omega,\tau)$ and it is straightforward to verify that $\widehat h=
f$. Extend $h$ to be a function on $K$ by defining $h=0$ off
$\overline \Omega$. By Lemma~\ref{l:key2} $h \in
A^{q}(\overline{\Omega},\mu|\overline{\Omega}) \subset A^{q}(K,\mu).
$ Clearly, $h$ does the job.

\end{proof}

\begin{lem} \label{l:EASY}
If\hspace{.06in}$a\in \nabla A^{q}(K,\mu)$, then
$\frac{f(z)-\hat{f}(a)}{z-a} \in A^{q}(K, \mu)$ for each $f\in
A^{q}(K,\mu)$.
\end{lem}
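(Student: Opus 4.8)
The plan is to exhibit $f\mapsto \dfrac{f-\hat f(a)}{z-a}$ as a bounded operator on $A^{q}(K,\mu)$ that preserves the dense subspace $A(K)$, and then to identify the image of an arbitrary $f$ by a limiting argument.

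First I would pin down the geometry near $a$. Since $\nabla A^{q}(K,\mu)$ is open and, as is standard, is contained in $K^{\circ}$, we may fix $\rho>0$ and $c>0$ with $\overline{D(a,2\rho)}\subset \nabla A^{q}(K,\mu)\cap K^{\circ}$ and $|g(\lambda)|\le c\,\|g\|_{L^{q}(\mu)}$ for all $g\in A(K)$ and all $\lambda\in D(a,2\rho)$. For $g\in A(K)$ the function $Tg(z):=\dfrac{g(z)-g(a)}{z-a}$ has a removable singularity at $a\in K^{\circ}$, so $Tg\in A(K)$. The heart of the proof is the uniform estimate $\|Tg\|_{L^{q}(\mu)}\le M\,\|g\|_{L^{q}(\mu)}$ for all $g\in A(K)$, with $M$ depending only on $c,\rho,q,\mu(K)$. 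To prove it I split $K$ into $D(a,\rho)$ and its complement. On $K\setminus D(a,\rho)$ one has $|z-a|\ge\rho$, hence $|Tg(z)|\le \rho^{-1}\big(|g(z)|+c\,\|g\|_{L^{q}(\mu)}\big)$. On $D(a,\rho)$ I would write the divided difference through the Cauchy integral $\dfrac{g(z)-g(a)}{z-a}=\dfrac{1}{2\pi i}\oint_{|\zeta-a|=3\rho/2}\dfrac{g(\zeta)}{(\zeta-z)(\zeta-a)}\,d\zeta$ (legitimate since $g$ is analytic on $K^{\circ}\supset\overline{D(a,2\rho)}$); on that circle $|g|\le c\,\|g\|_{L^{q}(\mu)}$, which yields $|Tg(z)|\le \dfrac{2c}{\rho}\|g\|_{L^{q}(\mu)}$. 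Raising to the $q$-th power, integrating against the finite measure $\mu$, and combining the two pieces gives the bound.

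Since $A(K)$ is dense in $A^{q}(K,\mu)$ and $T$ is bounded there, $T$ extends to a bounded operator $\widetilde T$ on $A^{q}(K,\mu)$; and because $T$ maps $A(K)$ into $A(K)$, $\widetilde T$ maps $A^{q}(K,\mu)$ into $\overline{A(K)}^{\,L^{q}(\mu)}=A^{q}(K,\mu)$. It remains to show $\widetilde T f=\dfrac{f-\hat f(a)}{z-a}$. Fix $f\in A^{q}(K,\mu)$ and $\{f_{n}\}\subset A(K)$ with $f_{n}\to f$ in $L^{q}(\mu)$; then $Tf_{n}\to\widetilde T f$ in $L^{q}(\mu)$, and $(z-a)\,Tf_{n}=f_{n}-f_{n}(a)=f_{n}-\hat f_{n}(a)$. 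Multiplication by the bounded function $z-a$ is continuous on $L^{q}(\mu)$, and $\hat f_{n}(a)=\int f_{n}k_{a}\,d\mu\to\int f k_{a}\,d\mu=\hat f(a)$ because $f_{n}\to f$ in $L^{q}(\mu)$; letting $n\to\infty$ gives $(z-a)\,\widetilde T f=f-\hat f(a)$ $\mu$-a.e. Off $\{z=a\}$ this is exactly $\widetilde T f=\dfrac{f-\hat f(a)}{z-a}$; if $\mu(\{a\})>0$ the same identity at the atom forces $f(a)=\hat f(a)$ there (consistent with the first lemma of Section~3), and the quotient is read with that value. Hence $\dfrac{f-\hat f(a)}{z-a}\in A^{q}(K,\mu)$.

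I expect the only genuine content to be the uniform estimate in the first step, and the one idea it requires is to bound the divided difference not by the single-point evaluation at $a$ but via a Cauchy integral over a circle that remains inside the open set of analytic bounded point evaluations, where the evaluation estimate holds with a single constant. Everything afterward is the routine density-and-continuity argument; the only bookkeeping nuisance is a possible atom at $a$, which is harmless once one notes $a\in K^{\circ}$ and invokes the first lemma of Section~3.
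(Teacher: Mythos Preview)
Your proof is correct and follows essentially the same route as the paper's: both show that for $g\in A(K)$ the divided difference $(g-g(a))/(z-a)$ lies in $A(K)$, then control its $L^{q}(\mu)$-norm by splitting $K$ into a small disk about $a$ (where the $abpe$ estimate is used) and its complement (where $|z-a|$ is bounded below), and finally pass to the limit along $f_{n}\to f$. The only cosmetic difference is packaging: you phrase the estimate as boundedness of an operator $T$ and then identify the extension, while the paper verifies directly that $\{(f_{n}-f_{n}(a))/(z-a)\}$ is $L^{q}$-Cauchy; your Cauchy-integral bound on the disk is exactly what lies behind the paper's appeal to uniform convergence there.
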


\begin{proof}
Let $W=\nabla A^{q}(K,\mu)$. Then there exists $\{f_{n}\} \subset
A(K)$ such that $f_{n} \rightarrow f$  in $L^{q}(\mu)$ and so $f_{n}
\rightarrow \hat{f}$ uniformly on compact subset of $W$.
 Thus $\frac{f_{n}(z) - f_{n}(a)} {z-a} \rightarrow
\frac{f(z) - \hat{f}(a)} {z-a} $
 uniform on a small
closed disk $ B(a,\delta) \subset W$. Note,
\begin{align*}
\int_{K}|\frac{f_{n}(z)-f_{n}(a)}{z-a}-\frac{f(z)-\hat{f}(a)}{z-a}
|^{q} d \mu
& 
   \leq
M\int_{K} |f_{n}(z) - f(z) - (f_{n}(a) -\hat{f}(a))|^{q} d \mu \\&
 + \int_{B(a,\delta)} |\frac{f_{n}(z) - f_{n}(a)} {z-a} - \frac{f(z) - \hat{f}(a)} {z-a} |^{q} d \mu,
\end{align*}
where $M=\sup_{z\in K\setminus B(a,\delta)}|\frac{1}{z-a}|^{q} $.
Thus, $\frac{f_{n}(z) - f_{n}(a)} {z-a} \rightarrow \frac{\hat{f}(z)
- \hat{f}(a)} {z-a} $ in $L^{q}(\mu)$. Since $\frac{f_{n}(z) -
f_{n}(a)} {z-a} \in A(K)$ for each $n$, the conclusion of the lemma
follows.

\end{proof}

\begin{lem} \label{l:ABPE}
Suppose that  $\nu \perp A(K)$ and $supp(\nu) \subset K$.
 Let $U$ be a component of $ K^{\circ} \setminus supp(\nu)$.
If $\hat{\nu}(a) \neq 0 $ at some $a\in U$,
 then $U\subset \nabla A^{1}(K,|\nu|)$.
\end{lem}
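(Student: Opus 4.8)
The plan is to verify directly that every point of $U$ is an $abpe$ for $A^{1}(K,|\nu|)$. The engine is a Cauchy-type reproducing identity. Fix $f\in A(K)$ and a point $\lambda\in U$. Since $\lambda\in K^{\circ}$ and $f$ is analytic there, the function $z\mapsto (f(z)-f(\lambda))/(z-\lambda)$ has a removable singularity at $\lambda$, hence lies in $A(K)$; because $\nu\perp A(K)$ its integral against $\nu$ vanishes, which rearranges to
\[
f(\lambda)\,\hat{\nu}(\lambda)=\int\frac{f(z)}{z-\lambda}\,d\nu(z),
\]
the right side being finite since $\lambda\notin\operatorname{supp}(\nu)$ forces $1/(z-\lambda)$ to be bounded on $\operatorname{supp}(\nu)$. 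The same observation shows $\hat{\nu}$ is analytic on $\mathbf{C}\setminus\operatorname{supp}(\nu)$, in particular on $U$; and since $\hat{\nu}(a)\neq 0$ for some $a\in U$ and $U$ is connected, $\hat{\nu}$ is not identically zero on $U$, so its zero set $Z$ in $U$ is discrete.

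Away from $Z$ the identity already gives $|f(\lambda)|\le |\hat{\nu}(\lambda)|^{-1}\operatorname{dist}(\lambda,\operatorname{supp}\nu)^{-1}\|f\|_{L^{1}(|\nu|)}$, so every point of $U\setminus Z$ is a bounded point evaluation (and with a locally uniform constant, an $abpe$). The only real issue is to reach the isolated zeros of $\hat{\nu}$ in $U$, where the factor $|\hat{\nu}(\lambda)|^{-1}$ blows up. To handle an arbitrary $b\in U$ I would fix $\rho>0$ so small that the closed disk $\{|z-b|\le\rho\}$ lies in $U$ and the circle $\{|z-b|=\rho\}$ avoids the finitely many points of $Z$ it would otherwise meet; put $m=\min_{|z-b|=\rho}|\hat{\nu}|>0$ and $d=\operatorname{dist}(\{|z-b|=\rho\},\operatorname{supp}\nu)>0$. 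On that circle the displayed identity yields $|f(z_{0})|\le (md)^{-1}\|f\|_{L^{1}(|\nu|)}$, and then Cauchy's integral formula over $\{|z-b|=\rho\}$, used to evaluate $f$ at any $\lambda$ with $|\lambda-b|<\rho/2$, gives
\[
|f(\lambda)|=\Bigl|\frac{1}{2\pi i}\oint_{|z_{0}-b|=\rho}\frac{f(z_{0})}{z_{0}-\lambda}\,dz_{0}\Bigr|\le\frac{2}{md}\,\|f\|_{L^{1}(|\nu|)}\qquad(f\in A(K)).
\]

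Hence the disk $\{|z-b|<\rho/2\}$ consists of bounded point evaluations with one common constant, so by the definition of $abpe$ it lies in $\nabla A^{1}(K,|\nu|)$; since $b\in U$ was arbitrary, $U\subset\nabla A^{1}(K,|\nu|)$, as claimed. The main obstacle, as indicated, is the presence of zeros of $\hat{\nu}$ inside $U$, circumvented by replacing the naive pointwise bound with a Cauchy-integral estimate over a circle chosen to miss those zeros; the remaining points (that the difference quotient belongs to $A(K)$, integrability of $1/(z-\lambda)$ against $\nu$, and extending the estimate from $A(K)$ to its $L^{1}(|\nu|)$-closure) are routine.
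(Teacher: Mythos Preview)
Your argument is correct. Both you and the paper start from the same Cauchy-type reproducing identity $f(\lambda)\hat{\nu}(\lambda)=\int f(z)/(z-\lambda)\,d\nu(z)$, obtained from $\nu\perp A(K)$ and the fact that $(f(z)-f(\lambda))/(z-\lambda)\in A(K)$ for $\lambda\in K^{\circ}$; this immediately shows that every point of $U$ where $\hat{\nu}\neq 0$ is an $abpe$.

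The only substantive difference is in how the isolated zeros of $\hat{\nu}$ on $U$ are absorbed. The paper appeals to the structural Lemma~\ref{l:ISO} (proved in \cite{comm}), which says that no component of the complement of $\nabla A^{q}(K,\mu)$ can be a single point, so the discrete zero set of $\hat{\nu}$ cannot obstruct the inclusion $U\subset\nabla A^{1}(K,|\nu|)$. You instead bypass that lemma entirely: for an arbitrary $b\in U$ you pick a circle in $U$ around $b$ that avoids the finitely many nearby zeros, get a uniform $L^{1}(|\nu|)$ bound on the circle, and then push it inside via Cauchy's integral formula (equivalently, the maximum modulus principle). Your route is more elementary and self-contained, at the cost of a couple of extra lines; the paper's route is shorter on the page but imports a result from another paper.
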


\begin{proof}
Clearly $\hat{\nu}(z) = \int \frac{1}{z-w} d\nu(w)$ is analytic on
$U$. Observe that for $f\in A(K)$,  $\frac{f(z)-f(a)}{z-a} \in A(K)$
for every $a\in K^{\circ}$.
 Suppose that  $\hat{\nu}(a) \neq 0$ for some $a\in U$. Then
there exists a small closed disk $\overline{B(a,\delta)} \subset U$
so that $\hat{\nu}(z) \neq 0$ on $\overline{B(a,\delta)}$.
 For each $\lambda \in B(a,\delta)$,
$\int \frac{f(z) - f(\lambda)}{z- \lambda} d \nu = 0$ and hence
\[ f(\lambda) = \frac{1}{\hat{\nu}(\lambda)} \int \frac{f(z)}{z-\lambda} d\nu,
\hspace{.05in} \mbox{for every}\hspace{.05in} f \in A(K).
\]
Since $\overline{B(a,\delta)}$ does not interest $supp(\nu)$, we see
that
\[|f(\lambda)| \leq c \|f\|_{L^{1}(|\nu|)}\hspace{.05in}
\mbox{ for some} \hspace{.05in}c > 0 \hspace{.05in}\mbox{
on}\hspace{.05in}
 \hspace{.05in}B(a,\delta).
\] Hence,
$a\in \nabla A^{1}(K,|\nu|)$.  Since the zeros of $\hat{\nu}$ is
isolated on $U$, it follows by Lemma~\ref{l:ISO}
 that $U \subset \nabla A^{1}(K,|\nu|)$.

\end{proof}

\begin{lem}
Let $h \in A^{q}(K,\mu)^{\perp}$ and set $\nu= h\mu$. Then $ \nabla
A^{1}(K,|v|) \subset \nabla A^{q}(K,\mu). $
\end{lem}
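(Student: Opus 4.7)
The plan is to unwind both definitions of $abpe$ and observe that the inequality controlling $|f(\lambda)|$ by $\|f\|_{L^{1}(|\nu|)}$ automatically upgrades to one controlled by $\|f\|_{L^{q}(\mu)}$ via H\"older's inequality. No approximation scheme or rational-approximation machinery seems to be required here; the lemma is essentially a duality/integration inequality.

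More concretely, let $p$ be the conjugate exponent, $\frac{1}{p}+\frac{1}{q}=1$. Since $h\in A^{q}(K,\mu)^{\perp}\subset L^{p}(\mu)$, the measure $\nu=h\mu$ satisfies $|\nu|=|h|\mu$ and $\|h\|_{L^{p}(\mu)}<\infty$. Suppose $a\in\nabla A^{1}(K,|\nu|)$. By definition, there exist a neighborhood $G$ of $a$ and a constant $c>0$ such that
\[
|f(\lambda)|\leq c\,\|f\|_{L^{1}(|\nu|)}=c\int|f|\,|h|\,d\mu,\qquad f\in A(K),\ \lambda\in G.
\]
Applying H\"older's inequality with exponents $q$ and $p$ on the right-hand side gives
\[
|f(\lambda)|\leq c\,\|h\|_{L^{p}(\mu)}\,\|f\|_{L^{q}(\mu)},\qquad f\in A(K),\ \lambda\in G,
\]
which is precisely the defining estimate showing $a\in\nabla A^{q}(K,\mu)$. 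Since $a$ was an arbitrary point of $\nabla A^{1}(K,|\nu|)$, the inclusion follows.

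There is essentially no obstacle: the only thing to verify is that $h\in L^{p}(\mu)$, which is built into the meaning of $A^{q}(K,\mu)^{\perp}$ (the annihilator lives in the predual $L^{p}(\mu)$). The proof uses nothing about $K$, about the structure of $K^{\circ}$, or about harmonic measures -- it is a pure functional-analytic consequence of the definitions, and it is precisely the sort of elementary step one expects to reach for once the nontrivial work (Lemma~\ref{l:ABPE}) has been done.
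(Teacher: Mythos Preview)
Your proof is correct and is essentially identical to the paper's own argument: the paper simply writes the H\"older inequality $\|f\|_{L^{1}(|\nu|)}\leq \|h\|_{L^{p}(\mu)}\|f\|_{L^{q}(\mu)}$ (with $\tfrac{1}{p}+\tfrac{1}{q}=1$) and notes that the conclusion follows. Your version spells out the unpacking of the $abpe$ definition more explicitly, but the content is the same.
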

\begin{proof}
For $f\in A(K)$, by H\"{o}lder's inequality $   \|f\|_{L^{1}(|\nu|)}
\leq \|h\|_{L^{p}(|\nu|)} \|f\|_{L^{q}(\mu)} , $ where
$\frac{1}{q}+\frac{1}{p}=1$. The the conclusion of the lemma clearly
follows

\end{proof}

\begin{prop} \label{p:ABPE}
Let $\mu$ be a positive finite measure with $supp(\mu) \subset K$.
 Let $U$ be a component of $ K^{\circ}\setminus supp(\mu)$.
If\hspace{.05in} $U \cap \nabla A^{q}(K,\mu) \neq \emptyset$,
 then $U\subset \nabla A^{q}(K,\mu)$.
\end{prop}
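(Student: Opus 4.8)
The plan is to extract from the hypothesis a finite measure $\nu_{0}$ that annihilates $A(K)$ and whose Cauchy transform is nonzero at the given $abpe$, and then to feed $\nu_{0}$ successively into Lemma~\ref{l:ABPE} and the lemma stated just before this proposition.

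First I would fix a point $a_{0}\in U\cap\nabla A^{q}(K,\mu)$. Since $f\mapsto f(a_{0})$ is a bounded functional on $A^{q}(K,\mu)$, the Hahn--Banach theorem produces a kernel $k_{a_{0}}\in L^{p}(\mu)$, with $\frac{1}{q}+\frac{1}{p}=1$, such that $f(a_{0})=\int f k_{a_{0}}\,d\mu$ for all $f\in A(K)$. The key observation is that, because $a_{0}\in K^{\circ}$, for every $f\in A(K)$ the function $z\mapsto f(z)(z-a_{0})$ again lies in $A(K)$ and vanishes at $a_{0}$; hence
\[
\int f(z)\,(z-a_{0})k_{a_{0}}(z)\,d\mu(z)=[\,f(z)(z-a_{0})\,]_{z=a_{0}}=0,
\qquad f\in A(K).
\]
Since $U\subset K^{\circ}\setminus supp(\mu)$ we have $a_{0}\notin supp(\mu)$, so $z-a_{0}$ is bounded and bounded away from $0$ on $supp(\mu)$; therefore $(z-a_{0})k_{a_{0}}\in L^{p}(\mu)$, and it is not identically zero a.e. $[\mu]$ (otherwise $k_{a_{0}}=0$ a.e. $[\mu]$, forcing $1=1(a_{0})=0$). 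Thus $\nu_{0}:=(z-a_{0})k_{a_{0}}(z)\,\mu$ is a nonzero measure with $(z-a_{0})k_{a_{0}}\in A^{q}(K,\mu)^{\perp}$, with $\nu_{0}\perp A(K)$, and with $supp(\nu_{0})\subset supp(\mu)$. Its Cauchy transform at $a_{0}$ is
\[
\widehat{\nu_{0}}(a_{0})=\int\frac{(w-a_{0})k_{a_{0}}(w)}{w-a_{0}}\,d\mu(w)=\int k_{a_{0}}\,d\mu=1,
\]
the last step being the evaluation of the constant function $1\in A(K)$ at $a_{0}$; in particular $\widehat{\nu_{0}}(a_{0})\neq 0$.

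Finally I would patch things together. Because $supp(\nu_{0})\subset supp(\mu)$, the connected set $U$ is contained in the component $U'$ of $K^{\circ}\setminus supp(\nu_{0})$ that contains $a_{0}$. Applying Lemma~\ref{l:ABPE} to the measure $\nu_{0}$ and the component $U'$, and using $\widehat{\nu_{0}}(a_{0})\neq 0$, gives $U'\subset\nabla A^{1}(K,|\nu_{0}|)$. Then the lemma stated immediately before this proposition, applied with $h=(z-a_{0})k_{a_{0}}\in A^{q}(K,\mu)^{\perp}$ (so that $h\mu=\nu_{0}$), gives $\nabla A^{1}(K,|\nu_{0}|)\subset\nabla A^{q}(K,\mu)$. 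Combining the inclusions, $U\subset U'\subset\nabla A^{1}(K,|\nu_{0}|)\subset\nabla A^{q}(K,\mu)$, which is exactly the assertion. The only delicate point, and the only place where the hypotheses on $U$ enter, is the verification that $\nu_{0}$ is a genuine annihilator of $A(K)$ with $\widehat{\nu_{0}}(a_{0})=1$: this rests on the two facts $a_{0}\in K^{\circ}$ (so that $A(K)$ is closed under multiplication by $z-a_{0}$ and the product vanishes at $a_{0}$) and $a_{0}\notin supp(\mu)$ (so that $z-a_{0}$ stays bounded away from $0$ on the support, keeping $(z-a_{0})k_{a_{0}}$ in $L^{p}(\mu)$ and making $\int k_{a_{0}}\,d\mu$ meaningful). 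Everything after that is a direct application of the two preceding lemmas.
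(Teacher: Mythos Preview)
Your proof is correct and follows essentially the same route as the paper's: produce an annihilator $g\in A^{q}(K,\mu)^{\perp}$ whose Cauchy transform is nonzero at the chosen $abpe$, then invoke Lemma~\ref{l:ABPE} and the comparison lemma $\nabla A^{1}(K,|g\mu|)\subset\nabla A^{q}(K,\mu)$. The only difference is that you construct $g=(z-a_{0})k_{a_{0}}$ explicitly (and carefully pass to the possibly larger component $U'$ of $K^{\circ}\setminus supp(\nu_{0})$), whereas the paper simply asserts the existence of such a $g$ and applies the lemmas directly.
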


\begin{proof}
Let $a\in U \cap \nabla A^{q}(K,\mu) $.  Then there is $g\in
A^{q}(K,\mu)^{\perp}$ such that $\int \frac{g d \mu}{z-a} \neq 0$.
Clearly,
\[g\mu \perp A(K)\hspace{.08in} \mbox{ and }\hspace{.08in}
U \subset K^{\circ}\setminus supp(g\mu).
\]
So it follows from the previous lemmas that
\[U \subset \nabla A^{1}(K,|g\mu|) \subset \nabla A^{q}(K,\mu).\]

\end{proof}

\begin{lem} \label {l:unique}
Let $\Omega = \nabla A^{q}(K,\mu)$ and let $U$ be a component of
$\nabla A^{q}(K,\mu)$. Suppose that $A^{q}(K,\mu)$ is pure. Let
$f\in A^{q}(K,\mu)$. If $\hat{f}=0$ and $f=0$  off\hspace{.03in}
 $\partial U$ \em{a.e.} $[\mu]$, then $f=0$.
\end{lem}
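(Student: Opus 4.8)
The plan is to derive a contradiction from an assumed $f\neq 0$ by producing a forbidden instance of purity, using the density criterion of Theorem~\ref{t:approx}.

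First I would localize $f\mu$ to $\partial U$. By the first lemma of this section $\hat f=f$ a.e. $[\mu]$ on $\nabla A^{q}(K,\mu)$, so $\hat f=0$ forces $f=0$ a.e. $[\mu]$ on $\nabla A^{q}(K,\mu)$; since $U$ is a connected component of the open set $\nabla A^{q}(K,\mu)$ it is relatively closed there, so $\partial U\cap\nabla A^{q}(K,\mu)=\emptyset$, and with $f=0$ off $\partial U$ this shows that $f\mu$ is carried by $\partial U$, a compact set disjoint from $\nabla A^{q}(K,\mu)$.

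Next, suppose $f\neq 0$ and let $\Delta$ be the carrier of $f\mu$, so $\Delta\subseteq\partial U$ and $\mu(\Delta)>0$. If $\nabla A^{q}(K,\mu|\Delta)=\emptyset$ then by Theorem~\ref{t:approx} $A(K)$ is dense in $L^{q}(\mu|\Delta)$, contradicting purity of $A^{q}(K,\mu)$; the same argument applied to the free summand of the standard splitting of $A^{q}(K,\mu|\Delta)$ shows that $A^{q}(K,\mu|\Delta)$ is itself pure. Choosing $f_{n}\in A(K)$ with $f_{n}\to f$ in $L^{q}(\mu)$ (hence in $L^{q}(\mu|\Delta)$), every $b\in\nabla A^{q}(K,\mu|\Delta)$ lies also in $\nabla A^{q}(K,\mu)$ because $\mu|\Delta\le\mu$, and $f_{n}(b)$ converges simultaneously to the analytic extension of $f$ in $A^{q}(K,\mu|\Delta)$ and to $\hat f(b)=0$; hence that extension vanishes identically on $\nabla A^{q}(K,\mu|\Delta)$, while $f$ is $(\mu|\Delta)$-a.e. nonzero. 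The problem has thus been reduced to showing: a pure space $A^{q}(K,\mu|\Delta)$ whose measure is carried by $\partial U$ cannot contain a nonzero, a.e.-nonzero element whose analytic extension is $\equiv 0$.

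To finish I would bring in the geometry. By Lemma~\ref{l:key2}, $U$ sits in a component $\widetilde{\Omega}$ of $K^{\circ}$, which is multi-nicely connected, and $\partial U\subseteq\overline{\widetilde{\Omega}}$; since a point of $K^{\circ}$ that is not an $abpe$ is a light point (the extension of Lemma~3.3 of \cite{qiu1} indicated in the footnote to Lemma~\ref{l:scheme}), the portion of $\Delta$ inside $K^{\circ}$ is $\mu$-null, so $\mu|\Delta$ is carried by $\partial K\cap\partial\widetilde{\Omega}$ and Proposition~\ref{p:ABS2}, applied to the pure space $A^{q}(K,\mu|\Delta)$, gives $\mu|\Delta\ll\omega$, whence $\mu|\Delta\ll\omega_{\widetilde{\Omega}}$ by the mutual singularity hypothesis. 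Now fix a circular domain $W$ and a conformal map $v$ of $\widetilde{\Omega}$ onto $W$, extend $v$ to $\partial\widetilde{\Omega}$ by its nontangential limits, and transport the data by $v$: $\nu:=(\mu|\Delta)\circ v^{-1}$ is a measure on $\partial W$ with $\nu\ll\omega_{W}$, and $f$ becomes a nonzero element of $A^{q}(\overline{W},\nu)$ whose analytic extension vanishes on the nonempty open set $v(U)\subseteq W$. By Lemmas~2 and 3 of \cite{comm}, $A^{q}(\overline{W},\nu)$ is the expected $H^{q}$-type space with $\nabla A^{q}(\overline{W},\nu)=W$, whose point evaluations separate bounded analytic functions, and the classical uniqueness theorem for analytic functions on $W$ then forces this element, hence $f$, to vanish. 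The hard part is precisely this last step: verifying that the part of $\Delta$ in $K^{\circ}$ is negligible, that $\mu|\Delta\ll\omega_{\widetilde{\Omega}}$, and above all that $f$ and the space $A^{q}(K,\mu|\Delta)$ transport cleanly under $v$ so that the circular-domain results of \cite{comm} actually apply; Steps~1 and 2 are routine once Theorem~\ref{t:approx}, Lemma~\ref{l:key2}, the standard splitting and the first lemma of this section are in hand.
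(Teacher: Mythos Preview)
Your route is quite different from the paper's, and it has a genuine gap at the end.

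The paper never passes to a restricted measure $\mu|\Delta$ nor transports anything to a circular domain. Instead it takes (from purity and Lemma~17.10 of \cite{conw}) an annihilator $h\in A^{q}(K,\mu)^{\perp}$ with $h\neq 0$ a.e.\ $[\mu]$, forms the complex measure $\nu=hf\mu$ (which is supported on $\partial U$ and annihilates $A(K)$ because $gf\in A^{q}(K,\mu)$ for $g\in A(K)$), and shows directly that $\hat\nu=0$ off $\partial U$. The key device is Lemma~\ref{l:EASY}: if $\hat\nu(a)\neq 0$ at some $a\in K^{\circ}\setminus\partial U$, Lemma~\ref{l:ABPE} puts $a$ in $\nabla A^{q}(K,\mu)$, whence $\frac{f}{z-a}=\frac{f-\hat f(a)}{z-a}\in A^{q}(K,\mu)$ and $\hat\nu(a)=\int\frac{f}{z-a}h\,d\mu=0$, a contradiction. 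Since $\hat\nu$ also vanishes off $\overline{K^{\circ}}$, every point of the plane is light for $|\hat\nu|$ (Lemma~3.7 of \cite{qiu1}), and Theorem~\ref{t:light} gives $\nu=0$, hence $f=0$. Note this argument uses neither Lemma~\ref{l:key2} nor Proposition~\ref{p:ABS2}, so Lemma~\ref{l:unique} holds for arbitrary $K$, as stated; your argument imports the multi-nicely-connected and mutual-singularity hypotheses and would prove only the special case needed for Theorem~\ref{t:main}.

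The more serious problem is your last step. After transport you need: for a measure $\nu$ on $\partial W$ with $\nu\ll\omega_{W}$ and $A^{q}(\overline W,\nu)$ pure, any $g\in A^{q}(\overline W,\nu)$ with $\hat g\equiv 0$ on $\nabla A^{q}(\overline W,\nu)$ must vanish. That is exactly Lemma~\ref{l:unique} with $K$ replaced by $\overline W$ and $U$ replaced by a component of $\nabla A^{q}(\overline W,\nu)$; nothing short of an independent proof for circular domains will close the circle. Lemmas~2 and~3 of \cite{comm}, as described in the paper, give only that the transported space is irreducible with $\nabla=W$ and $\nu\ll\omega_{W}$; they do not give injectivity of $f\mapsto\hat f$ on all of $A^{q}(\overline W,\nu)$. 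Your appeal to ``point evaluations separate bounded analytic functions'' does not help either, since the $f$ in question is not assumed bounded. Even granting that the transport $A^{q}(K,\mu|\Delta)\to A^{q}(\overline W,\nu)$ behaves well (which itself leans on Lemma~\ref{l:key2}), you are left needing the very injectivity you set out to prove.
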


\begin{proof}
Since $ A^{q}(K,\mu)$ is pure, as we argued in the proof of
Proposition~\ref{p:ABS2}, there exists $h \in A^{q}(K,\mu)^{\perp}$
such that $h \neq 0$ {\em a.e.} $[\mu]$ and $h = 0 $ off
$\overline{K^{\circ}}$. So $h \perp
 A(\overline{K^{\circ}})$ as well.
Let $\nu = hf \mu$. Then $\nu$ is a measure such that it is
perpendicular to $A(K)$ and $supp(\nu) \subset \partial U$.
 We show  that $\hat{\nu}(a) =0$ off $\partial U$.

Let  $W$ be a component of $K^{\circ} \setminus supp(\nu)$. We claim
that $\hat{\nu}(z) = 0$ on $W$.  Suppose $\hat{\nu}(a) \neq 0$ for
some $a \in W$.  According to  Lemma~\ref{l:ABPE},
 \[W \subset \nabla A^{1}(K,|\nu|) \subset \nabla A^{q}(K,\mu).
\]
 By Lemma~\ref{l:EASY} and the hypothesis, we have that
$\frac{f}{z-a} \in A^{q}(K,\mu)$. But $h \perp A(K)$. So we conclude
that $\hat{\nu}(a) = 0$, which contradicting our assumption above.
Therefore, $\hat{\nu} = 0$ on $W$. In particular, $\hat{\nu} = 0$ on
$K^{\circ} \setminus \partial U$. It is easy to see that $
\overline{K^{\circ} \setminus \partial U} \supset
\overline{K^{\circ}}.$ \hspace{.02in}So, by the continuity we have
that $\hat{\nu} = 0$ on $\overline{K^{\circ}} - \partial U$. Because
$h \perp A(\overline{K^{\circ}})
 \supset R(\overline{K^{\circ}})$,
$\hat{\nu} = 0$ off $\overline{K^{\circ}}$. Hence, we conclude that
$\hat{\nu} = 0$ off $\partial U$.

Now, according to our definition, it is apparent that every
 point off $\partial U$ is a light point with respect to $|\hat{\nu}|$.
So it follows from  Lemma 3.7 in \cite{qiu1} that every point in
$\partial U$ is light as well. Consequently, every point in the
plane {\bf C} is a light point. Applying Theorem~\ref{t:light}, we
conclude that $v = hf\mu = 0$. Since $h \neq 0$ {\em a.e.} on $K$,
$f$ must be the zero function in $L^{q}(\mu)$. So we are done.

\end{proof}

\begin{lem} \label{l:finite}
Let $K$ be a compact subset in {\bf C} such that each component of
$K^{\circ}$ is finitely connected. Let $\mu$ be a positive finite
measure supported on $K$. Then each component $U$ of $\nabla
A^{q}(K,\mu)$ is a finitely connected domains conformally equivalent
to a circular domain in the plane. Moreover, the connectivity of $U$
 does not exceed the connectivity of the component of $K^{\circ}$
that contains $U$.
\end{lem}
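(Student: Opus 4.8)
The plan is to localize first. Every analytic bounded point evaluation lies in $K^{\circ}$: the defining inequality $|f(\lambda)|\le c\,\|f\|_{L^{q}(\mu)}$ must hold for all $f\in A(K)$ on a whole neighbourhood $G$ of $w$, and for the left side to be meaningful we need $G\subseteq K$, whence $w\in\mathrm{int}\,G\subseteq K^{\circ}$. Thus $\nabla A^{q}(K,\mu)\subseteq K^{\circ}$, and a fixed component $U$ of $\nabla A^{q}(K,\mu)$ lies in a single component $\Omega$ of $K^{\circ}$, which by hypothesis has some finite connectivity $N$. It then suffices to prove (a) $\mathbb{C}_{\infty}\setminus U$ has at most $N$ components, and (b) none of them is a single point: $U$ is a domain (a component of the open set $\nabla A^{q}(K,\mu)$), and a finitely connected domain all of whose complementary components are non-degenerate is conformally equivalent to a circular domain by the classical Koebe uniformization theorem, its connectivity being the number of components of $\mathbb{C}_{\infty}\setminus U$.

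For (a) write $\mathbb{C}_{\infty}\setminus\Omega=E_{1}\cup\dots\cup E_{N}$ for its components. Since $U\subseteq\Omega$, each $E_{i}$ sits in a unique component of $\mathbb{C}_{\infty}\setminus U$, so it is enough to show that \emph{every} component $F$ of $\mathbb{C}_{\infty}\setminus U$ meets $\mathbb{C}_{\infty}\setminus\Omega$; distinct components of $\mathbb{C}_{\infty}\setminus U$ then absorb disjoint nonempty subfamilies of $\{E_{1},\dots,E_{N}\}$, giving at most $N$ of them. So suppose, for contradiction, that some component $F$ of $\mathbb{C}_{\infty}\setminus U$ satisfies $F\subseteq\Omega$; then $F$ is a compact connected subset of $K^{\circ}$. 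Because $\mathbb{C}_{\infty}\setminus U$ is compact, $F$ is the intersection of the relatively clopen subsets of $\mathbb{C}_{\infty}\setminus U$ containing it, so we may choose a relatively clopen $L$ with $F\subseteq L\subseteq V$, where $V$ is a bounded open set with $F\subseteq V$ and $\overline{V}\subseteq\Omega$ (such $V$ exists since $F$ is compact and $\Omega$ open). Now $L$ and $(\mathbb{C}_{\infty}\setminus U)\setminus L$ are disjoint compacta, and $L$ is a compact subset of $\Omega$; taking a square grid of mesh a small fraction of $\min\{\mathrm{dist}(L,(\mathbb{C}_{\infty}\setminus U)\setminus L),\ \mathrm{dist}(L,\mathbb{C}_{\infty}\setminus\Omega)\}$ and letting $P$ be the union of the closed grid squares near $L$ yields a compact polygonal region $P$ with $L\subseteq\mathrm{int}\,P\subseteq P\subseteq\Omega\subseteq K^{\circ}$ and with (positively oriented) boundary cycle $\Gamma=\partial P\subseteq U$.

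The crux — and the step I expect to be the main obstacle — is to extract a uniform estimate from the Cauchy integral over $\Gamma$ while keeping the enclosed region inside $K^{\circ}$. For $f\in A(K)$, which is holomorphic on the open set $K^{\circ}\supseteq P$, and for $a\in\mathrm{int}\,P$ one has $f(a)=\frac{1}{2\pi i}\int_{\Gamma}\frac{f(z)}{z-a}\,dz$; on $\Gamma\subseteq U\subseteq\nabla A^{q}(K,\mu)$ we have $f(z)=\hat f(z)$ and $|\hat f(z)|\le\|k_{z}\|\,\|f\|_{L^{q}(\mu)}$, and the very definition of an $abpe$ makes $z\mapsto\|k_{z}\|$ locally bounded on $\nabla A^{q}(K,\mu)$, hence bounded on the compact set $\Gamma$. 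Therefore $|f(a)|\le c(a)\,\|f\|_{L^{q}(\mu)}$ with $c(a)$ locally bounded on $\mathrm{int}\,P$, so $\mathrm{int}\,P\subseteq\nabla A^{q}(K,\mu)$, and in particular $F\subseteq\mathrm{int}\,P\subseteq\nabla A^{q}(K,\mu)$. Let $D$ be the component of $\mathrm{int}\,P$ containing $F$; being relatively clopen in $\mathrm{int}\,P$, its boundary satisfies $\partial D\subseteq\partial P=\Gamma\subseteq U$, so $\overline{D}=D\cup\partial D$ is a connected subset of $\nabla A^{q}(K,\mu)$ meeting both $F$ and $U$. Hence $\overline{D}$ lies in the component $U$, forcing $F\cap U\ne\emptyset$, which is impossible. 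This proves (a), and shows more precisely that every component of $\mathbb{C}_{\infty}\setminus U$ contains some $E_{i}$.

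For (b), compactness of $K$ together with the finite connectivity of $\Omega$ rules out degenerate complementary components of $\Omega$: if $\{a\}$ were a component of $\mathbb{C}_{\infty}\setminus\Omega$, then, that set having only finitely many components, $\{a\}$ would be relatively clopen in it and hence an isolated point, so $\Omega$ would contain a punctured disc about $a$; the closure of that disc lies in the compact set $K$, so $a\in K^{\circ}$, whence $a$ lies in the component $\Omega$ — a contradiction. Thus each $E_{i}$ is a non-degenerate continuum, and by the last sentence of the previous step every component of $\mathbb{C}_{\infty}\setminus U$ contains some $E_{i}$ and is therefore non-degenerate. Consequently $U$ is a finitely connected domain all of whose complementary components are non-degenerate, so by Koebe's uniformization theorem $U$ is conformally equivalent to a circular domain, and its connectivity, namely the number of components of $\mathbb{C}_{\infty}\setminus U$, is at most $N$, the connectivity of $\Omega$.
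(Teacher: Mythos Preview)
Your proof is correct and follows essentially the same strategy as the paper's: show that every component $F$ of $\mathbb{C}_{\infty}\setminus U$ must meet $\mathbb{C}_{\infty}\setminus\Omega$ by surrounding a hypothetical $F\subseteq\Omega$ with a curve in $U$ and using analyticity on $K^{\circ}$ to force $F\subseteq\nabla A^{q}(K,\mu)$, a contradiction. The paper argues via a Jordan curve $\gamma\subseteq U$ around $F$, uniform convergence of an approximating sequence on $\gamma$, and the maximum principle; you instead build a polygonal $P$ from a fine grid and apply the Cauchy integral formula with the locally bounded kernel norms---the same idea, but your topological construction is cleaner and sidesteps the paper's slightly awkward phrase ``since $U$ is finitely connected'' when producing $\gamma$ (a fact still being proved). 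For the non-degeneracy of complementary components the paper invokes Lemma~\ref{l:ISO} (imported from \cite{comm}), whereas you give a short self-contained argument that an isolated point of $\mathbb{C}_{\infty}\setminus\Omega$ would be forced into $K^{\circ}$ and hence into $\Omega$; this makes your proof independent of that external lemma.
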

\begin{proof}
Suppose $\nabla A^{q}(K,\mu) \neq \emptyset$. Let $U$ be a component
of $\nabla A^{q}(K,\mu)$ and  let $\Omega$ be the component of
$K^{\circ}$ that contains $U$. Let $M$ be the connectivity of
$\Omega$.

Now  suppose $F$ is a component of {\bf C}$\setminus U$. We claim
that $ F\cap (\mbox{ {\bf C}}\setminus  \Omega ) \neq \emptyset.  $
First, if $F$ is unbounded, this is obvious. So we assume $F$ is a
bounded subset in the plane and assume  that
 $ F\cap (\mbox{ {\bf C}} \setminus  \Omega ) = \emptyset$. Then
$F\subset \Omega$. Since $U$ is a connected domain, $F$ is
polynomially convex (this means the complement of $F$ is connected).
Since $U$ is finitely connected, there exists a Jordan curve
$\gamma$ in $U$ such that $F $ is contained in $V$, the bounded
Jordan domain enclosed by $\gamma$. Let $f\in A^{q}(K,\mu)$ and
choose a sequence of functions $\{r_{n}\}$ in $A(K)$ such that $
r_{n} \rightarrow f$ in $L^{q}(\mu)$. Since $\gamma$ is contained in
$ U \subset  \nabla A^{q}(K,\mu)$, it follows that $r_{n}
\rightarrow f$ uniformly on $\gamma$. Also, it is clear that we can
choose $\gamma$ such that  $dist(F, \gamma)$
 small enough that the closure of $V$
is contained in $\Omega \subset K^{\circ}$. Then each $r_{n}$ is
analytic on $\overline{V}$ and thus the maximum principle implies
that $\{r_{n}\}$ uniformly converges to a function $h$ near $F$.  By
the definition of $abpe$s, we have that $F \subset \nabla
A^{q}(K,\mu)$. But $F\cap \partial U \neq \emptyset$, and hence we
conclude that $\partial U \cap
 \nabla A^{q}(K,\mu)
\neq \emptyset$, which  is a contradiction.  Hence $ F\cap (\mbox{
{\bf C}} \setminus  \Omega ) \neq \emptyset$.

Now let $\{E_{i}\}$ be the collection of all the components of ({\bf
C} $ \setminus \Omega$) that intersect $F$. Then  $F \cup (\cup
E_{i})$ is connected compact subset and
 \[ [ F \cup (\cup E_{i})] \cap U = \emptyset. \]
So $F \cup (\cup E_{i}))$ is contained a component of {\bf C}
$\setminus U$ that contains $F$.  Hence $F \cup (\cup E_{i})  = F$
and therefore
 $E_{i} \subset F$ for each $i$.
Consequently, each component of {\bf C} $\setminus U$ contains at
least a component of ({\bf C} $\setminus \Omega$). Since the number
of the components of {\bf C} $\setminus \Omega$ is $M$, we see that
the number of the components of {\bf C} $\setminus U$  is less than
\mbox{or equal to $M$.}

Finally, since $U$ is finitely connected and since $\partial U$
contains no single-point component (by Lemma~\ref{l:ISO}), it
follows by a classical result \cite[Tsuji, p. 424]{tsuji} that $U$
is conformally equivalent to a circular domain.

\end{proof}

The next two propositions are Theorem 1 and Theorem 3 in
\cite{comm}, respectively. We include them for readers convenience
and self contained.

\begin{prop} \label{p:abs}
Let $A^{q}(K,\mu)$ be irreducible. Let $U =\nabla A^{q}(K,\mu)$ be a
finitely connected domain. If  the map $e$, defined by $e(f) =
\hat{f}$, from $A^{q}(K,\mu)\cap L^{\infty}(\mu)$ to $H^{\infty}(U)$
is surjective,
 then $\mu|\partial U \ll \omega_{U}$,
the harmonic measure of $U$.
\end{prop}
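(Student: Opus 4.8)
The plan is to exploit the surjectivity of $e$ to pull back, via a conformal model, to a known absolute-continuity statement and then transport it back. Since $A^{q}(K,\mu)$ is irreducible, it is in particular pure, and by Lemma~\ref{l:ISO} the complement of $U$ has no single-point components, so by the classical result quoted in Lemma~\ref{l:finite} there is a circular domain $W$ and a conformal map $\phi\colon W\to U$. Because $e$ is surjective onto $H^{\infty}(U)$ and $\phi^{-1}\in H^{\infty}(U)$, there is $v\in A^{q}(K,\mu)\cap L^{\infty}(\mu)$ with $\hat v=\phi^{-1}$ on $U$. First I would push forward the measure by setting $\eta=\mu\circ v^{-1}$ on $\overline W$, and check (as in the necessity argument following Theorem~\ref{t:main}, using Lemma~2 of \cite{comm}) that $A^{q}(\overline W,\eta)$ is irreducible with $\nabla A^{q}(\overline W,\eta)=W$, and that $e$ remains surjective for this pushed-forward datum.

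Next I would invoke the fact that $W$ is circular: when $W$ is a circular domain and $\nabla A^{q}(\overline W,\eta)=W$, surjectivity of the evaluation map forces $A^{\infty}(\overline W,\eta)$ to coincide with the image $\widetilde H^{\infty}(W)$ of $H^{\infty}(W)$ under taking boundary values, exactly as in the passage after Remark following Theorem~\ref{t:main}. From here a classical F.~and~M.~Riesz / boundary-value argument on the circular domain $W$ (the harmonic measure $\omega_W$ is, up to boundedly equivalent normalization, arclength on $\partial W$, and $A(\overline W)$ is essentially a Dirichlet-type algebra there) yields $\eta|\partial W\ll\omega_W$. This is the standard ingredient; I would cite Lemma~3 of \cite{comm} (the same lemma used in the necessity discussion) to get $\eta|\partial W\ll\omega_W$ directly.

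Finally I would transport the absolute continuity back through $\phi$. Since $\phi$ is a conformal map of the circular domain $W$ onto $U$, harmonic measure is conformally natural: $\omega_U=\omega_W\circ\phi^{-1}$ on the appropriate boundary sets, and because $v=\tilde\phi^{-1}$ has well-defined boundary values realizing $\phi^{-1}$ a.e.\ $[\omega_U]$ (this is where the almost-$1$--$1$ boundary behaviour built into multi-nice connectivity and Lemma~\ref{l:key2} enters, via the nontangential-limit discussion in Section~1), the relation $\eta=\mu\circ v^{-1}$ pulls $\eta|\partial W\ll\omega_W$ back to $\mu|\partial U\ll\omega_U$.

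The main obstacle I expect is the boundary-correspondence bookkeeping: one must ensure that the boundary value function $\tilde v=\tilde\phi^{-1}$ is genuinely invertible on a set of full $\omega_U$-measure and that the change of variables $\eta=\mu\circ v^{-1}$ respects the decomposition into parts absolutely continuous and singular with respect to harmonic measure — in other words, that no singular mass of $\mu|\partial U$ gets ``hidden'' by the boundary map. Handling this cleanly is precisely why one needs the circular model $W$ (where the boundary is rectifiable and harmonic measure is transparent) rather than working on $U$ directly, and why the conformal map must be almost $1$--$1$ on the boundary; the irreducibility hypothesis is what finally kills any leftover singular part via an argument like that in Lemma~\ref{ABS1}.
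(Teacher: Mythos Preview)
The paper does not prove this proposition at all: immediately before stating it, the text says ``The next two propositions are Theorem~1 and Theorem~3 in \cite{comm}, respectively,'' and no argument is given. So there is no in-paper proof to compare your proposal against.

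Your overall strategy---push $\mu$ forward to a circular model $W$ via $v=e^{-1}(\phi^{-1})$, invoke an F.\ and M.\ Riesz--type result on $\partial W$, then transport back---is the natural one and almost certainly the one used in \cite{comm}. You have also correctly located the crux: the ``transport back'' step. But as written there is a genuine gap precisely there. You justify the boundary correspondence $v=\tilde\phi^{-1}$ a.e.\ $[\mu|\partial U]$ by appealing to ``the almost $1$--$1$ boundary behaviour built into multi-nice connectivity'' and the nontangential-limit discussion. Two problems: first, the hypothesis of Proposition~\ref{p:abs} is only that $U$ is \emph{finitely} connected, not that it is multi-nicely connected, so you are not entitled to that boundary regularity of $\phi$; second, and more seriously, the statement that $e^{-1}(f)$ coincides a.e.\ $[\mu|\partial U]$ with the nontangential boundary values of $f$ is exactly the content of the companion Proposition~\ref{p:nont} (Theorem~3 of \cite{comm}), which sits at the same logical depth and is likewise imported without proof. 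Thus your argument reduces Proposition~\ref{p:abs} to Proposition~\ref{p:nont} plus an unjustified regularity assumption on $U$, rather than proving it outright.

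Note also that the necessity discussion after Theorem~\ref{t:main}, which you use as a template, explicitly \emph{assumes} the conclusion of Theorem~\ref{t:main} (hence Propositions~\ref{p:abs} and \ref{p:nont}); it cannot be recycled as a proof of one of its own ingredients. A self-contained argument would have to establish the boundary correspondence for $v$ directly from irreducibility and surjectivity of $e$, without presupposing either multi-nice connectivity of $U$ or Proposition~\ref{p:nont}---and that is the real work carried out in \cite{comm}.
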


\begin{prop} \label{p:nont}
Let $A^{q}(K,\mu)$ be irreducible. Let $U =\nabla A^{q}(K,\mu)$ be a
finitely connected domain and let $u$ be a conformal map from a
circular domain $W$ onto $U$. If  the map $e$, defined by $e(f) =
\hat{f}$, from $A^{q}(K,\mu)\cap L^{\infty}(\mu)$ to $H^{\infty}(U)$
is surjective,
 then for each $f\in H^{\infty}(U)$
\[ e^{-1}(f)(a) = \lim_{z\rightarrow u^{-1}(a)} f\circ u(z) \hspace{.05in}
\mbox{a.e. on} \hspace{.05in}\partial U \hspace{.05in}\mbox{ with
respect to }\hspace{.05in}  \mu|\partial U.
\]
Moreover, $A(U) \subset A^{q}(K,\mu)$.
\end{prop}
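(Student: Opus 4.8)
The plan is to transport the problem to the circular domain $W$ via the given conformal map, settle it there with the classical boundary--value theory for $H^{\infty}$ of a circular domain, and pull the conclusion back. Since $e$ is surjective and $u^{-1}\in H^{\infty}(U)$ (because $W$ is bounded), fix $v_{e}\in A^{q}(K,\mu)\cap L^{\infty}(\mu)$ with $\widehat{v_{e}}=u^{-1}$, and let $\nu=\mu\circ v_{e}^{-1}$ be the push--forward measure, which is carried by $\overline{W}$. The first step is to prove that $\phi\mapsto \phi\circ v_{e}$ is an isometric isomorphism of $A^{q}(\overline{W},\nu)\cap L^{\infty}(\nu)$ onto $A^{q}(K,\mu)\cap L^{\infty}(\mu)$ that intertwines the two evaluation maps. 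The isometry is the change--of--variables identity $\int|\phi\circ v_{e}|^{q}\,d\mu=\int|\phi|^{q}\,d\nu$; the content is that $\phi\circ v_{e}\in A^{q}(K,\mu)$ for $\phi\in A(\overline{W})$, which I would get by uniformly approximating $\phi$ by rational functions with poles off $\overline{W}$ and realising, for each $a\notin\overline{W}$, the resolvent $(v_{e}-a)^{-1}$ inside $A^{q}(K,\mu)$: surjectivity of $e$ gives $w_{a}\in A^{q}(K,\mu)\cap L^{\infty}(\mu)$ with $\widehat{w_{a}}=(u^{-1}-a)^{-1}$, and then Lemma~\ref{l:elem1} yields $\widehat{(v_{e}-a)w_{a}}=1$ on $U$, so a uniqueness argument (if $h\in A^{q}(K,\mu)\cap L^{\infty}(\mu)$ has $\hat h=0$ on $U$ then $h=0$, valid since $A^{q}(K,\mu)$ is irreducible with nonempty $\nabla$) forces $(v_{e}-a)w_{a}=1$ a.e. $[\mu]$.

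Granting this identification, the intertwining property gives $\nabla A^{q}(\overline{W},\nu)=u^{-1}(U)=W$, that $A^{q}(\overline{W},\nu)$ is irreducible, and that its evaluation map onto $H^{\infty}(W)$ is surjective, so Proposition~\ref{p:abs} applies to $A^{q}(\overline{W},\nu)$ and yields $\nu|\partial W\ll\omega_{W}$. It now suffices to prove the displayed formula with $U,\mu,u$ replaced by $W,\nu,\mathrm{id}$, i.e. that for $g\in H^{\infty}(W)$ one has $e_{\nu}^{-1}(g)=\tilde g$ (the nontangential boundary--value function of $g$) $\nu$--a.e. on $\partial W$; since $\nu|\partial W\ll\omega_{W}$ this is a statement about $\omega_{W}$--a.e. boundary behaviour, and on a circular domain it is classical: the weak--star closure $A^{\infty}(\overline{W},\omega_{W})$ of $A(\overline{W})$ in $L^{\infty}(\omega_{W})$ equals $\widetilde H^{\infty}(W)=\{\tilde g:g\in H^{\infty}(W)\}$, and $g\mapsto\tilde g$ is exactly the inverse of the evaluation map. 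Approximating $e_{\nu}^{-1}(g)$ in $L^{q}(\nu)$ by a sequence $\{g_{n}\}\subset A(\overline{W})$ (which may be taken uniformly bounded, as $e_{\nu}^{-1}(g)$ is bounded), one has $g_{n}\to g$ uniformly on compacta of $W$ and $g_{n}\to e_{\nu}^{-1}(g)$ in $L^{q}(\nu|\partial W)$; since each $g_{n}$ equals its own radial limit on $\partial W$ and $\nu|\partial W\ll\omega_{W}$, passing to a subsequence gives $e_{\nu}^{-1}(g)=\tilde g$ $\nu$--a.e. on $\partial W$. Undoing the $v_{e}$--identification (whose boundary behaviour against $\mu|\partial U$ is that of $u^{-1}$, using $\mu|\partial U\ll\omega_{U}$ from Proposition~\ref{p:abs}) turns $\tilde g$ with $g=f\circ u$ back into $a\mapsto\lim_{z\to u^{-1}(a)}f\circ u(z)$, which is the asserted identity.

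For the last assertion, a conformal map of a circular domain extends continuously to $\overline{W}$ by Carath\'eodory's theorem (each boundary component of $W$ is a circle, hence a Jordan curve, and $U$ has no single--point boundary components by Lemma~\ref{l:ISO}), so for $g\in A(U)$ the composition $g\circ u$ lies in $A(\overline{W})\subset A^{q}(\overline{W},\nu)$; pulling back through $v_{e}$ gives $(g\circ u)\circ v_{e}\in A^{q}(K,\mu)$, and this function equals $g$ a.e. $[\mu]$ — on $U\cap supp(\mu)$ because $v_{e}=\widehat{v_{e}}=u^{-1}$ a.e. there, and on $\partial U\cap supp(\mu)$ by the boundary correspondence of $u$ together with the continuity of $g$ on $\overline{U}$ — whence $A(U)\subset A^{q}(K,\mu)$. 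I expect the genuine obstacle to be the first step: rigorously pinning down the $v_{e}$--induced identification, i.e. the injectivity/uniqueness statement for $e$, the verification that the pullback of $A(\overline{W})$ lands inside $A^{q}(K,\mu)$ via the resolvent realisation, and the control of the boundary correspondence of $u$ (equivalently, the behaviour of $v_{e}$ on $\partial U$ against $\mu|\partial U$). Once the circular domain $W$ is reached, the remaining arguments are the standard $H^{\infty}$ boundary--value theory.
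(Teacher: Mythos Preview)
The paper does not prove this proposition at all: it is quoted verbatim as Theorem~3 of \cite{comm}, so there is no in-paper argument to compare against.  That said, your push-forward strategy via $v_{e}$ with $\widehat{v_{e}}=u^{-1}$ and $\nu=\mu\circ v_{e}^{-1}$ is exactly the machinery the author invokes from \cite{comm}; see the necessity argument following Remark~3, where ``Lemma~2 in \cite{comm}'' gives precisely your identification $\nabla A^{q}(\overline{W},\nu)=W$ with $A^{q}(\overline{W},\nu)$ irreducible, and ``Lemma~3 of \cite{comm}'' supplies the absolute continuity $\nu|\partial W\ll\omega_{W}$.  So your outline is faithful to the intended route.

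Two genuine gaps remain in your execution.  First, your appeal to Carath\'eodory for the continuous extension of $u:W\to U$ to $\overline{W}$ is backwards: what matters is the local connectivity of $\partial U$, not of $\partial W$, and Lemma~\ref{l:ISO} only rules out point components of $\mathbf{C}\setminus U$ --- it says nothing about $\partial U$ being locally connected, let alone a union of Jordan curves.  In general $u$ need not extend continuously.  The correct route to $A(U)\subset A^{q}(K,\mu)$ is to use the first part of the proposition: for $g\in A(U)$ take $h=e^{-1}(g|_{U})$, note that as $z\to u^{-1}(a)$ nontangentially one has $u(z)\to a$ (this much is true $\omega_{W}$-a.e.\ from the boundary behaviour of bounded univalent maps), and then continuity of $g$ on $\overline{U}$ gives $h=g$ a.e.\ $[\mu|\partial U]$; combined with $h=\hat h=g$ a.e.\ $[\mu|U]$ and the fact (which must also be established, and is part of what \cite{comm} does) that $supp(\mu)\subset\overline{U}$, one gets $g=h\in A^{q}(K,\mu)$.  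Second, your parenthetical ``which may be taken uniformly bounded, as $e_{\nu}^{-1}(g)$ is bounded'' is not justified: membership in $L^{\infty}(\nu)\cap A^{q}(\overline{W},\nu)$ does not by itself yield uniformly bounded $A(\overline{W})$-approximants in the $L^{q}$-norm.  One bypasses this by working weak-star in $L^{\infty}(\nu)$ (where $A(\overline{W})$ is dense in $A^{q}(\overline{W},\nu)\cap L^{\infty}(\nu)$ by a Krein--Smulian argument) and then using $\nu|\partial W\ll\omega_{W}$ together with the identification $A^{\infty}(\overline{W},\omega_{W})=\widetilde{H^{\infty}}(W)$ on the circular domain.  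You are right that the injectivity of $e$ and the containment $supp(\mu)\subset\overline{U}$ are the substantive points; both are handled in \cite{comm}, and neither follows from Lemma~\ref{l:unique} alone without first knowing where $\mu$ lives.
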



\vspace{.05in} \noindent \underline{The proof of
Theorem~\ref{t:main}}.

Let $\mu= \mu_{0} +\tau$  be the decomposition such that
 $A^{q}(K,\tau)$ is pure and
\[A^{q}(K,\mu) = L^{q}(\mu_{0}) \oplus A^{q}(K,\tau).
\]
Suppose $A(K)$ is not dense in $L^{q}(\mu)$. Then $\tau \neq 0$ in
the decomposition.  According to Theorem~\ref{t:approx}, $\nabla
A^{q}(K,\tau) \neq \emptyset$.

 Let $\{U_{n}\}_{n=1}^{\infty}$
be the components of $\nabla A^{q}(K,\tau)$. For each $n\geq 1$, by
Lemma~\ref{l:scheme}
 there exists $f_{n}$ in $A^{q}(K,\tau)\cap L^{\infty}(\tau)$
such that $\hat{f_{n}} = \chi_{U_{n}}$ and $f_{n} = 0$ off $U_{n}$.
Since $U_{n}$'s are pairwise disjoint, we have
 $\widehat{f_{n}f_{m}} =\hat{f}_{n} \hat{f}_{m} = 0.
$ It follows by Lemma~\ref{l:unique} that $f_{n}f_{m} = 0$.
Similarly, since $\hat{f}_{n}^{2} = \hat{f_{n}}$, we get that $
f^{2}_{n} = f_{n}.$ Therefore,  we conclude that
$f_{n}=\chi_{\Delta_{n}}$ for some Borel subset $\Delta_{n}$. But
$\tau(\Delta_{n} \cap \Delta_{m}) = 0$ (because $f_{n}f_{m}=0$).
 Thus $\Delta_{n}$'s can be chosen to be pairwise disjoint.
Moreover, since $f_{n} = 0$ off $\overline U_{n}$,
 we can also require that $\overline \Delta_{n} \subset \overline U_{n} $.

For each $n\geq 1$, let $K_{n}=\overline{U}_{n}$ and let $\mu_{n} =
\tau|\Delta_{n}$. We claim that $U_{n} = \nabla
A^{q}(K_{n},\mu_{n}). $ Let $\Omega$ be the component of $K^{\circ}$
that contains $U_{n}$. By Lemma~\ref{l:key2},
$A^{q}(\overline{\Omega},\tau|\overline{ \Omega})\subset
 A^{q}(K,\tau)$. Note,  every function $f$ in $A^{q}(\overline{\Omega},
\tau|\overline{ \Omega})$ has zero values off $ \overline \Omega$.
Clearly, $f_{n} $ ($\chi_{\Delta_{n}})$  belongs to $
A^{q}(\overline{\Omega},\tau|\overline{ \Omega})$ also.  Set $F_{n}=
U_{n} \cup (\Delta_{n}\setminus U_{n}). $ Because $\overline
\Delta_{n}  \subset \overline U_{n} $, we see  that $\chi_{F_{n} } =
f_{n} \in A^{q}(\overline{\Omega},\tau|\overline{ \Omega})$. So by
Lemma~\ref{l:elem1}, $\chi_{F_{n}}f  \subset A^{q}
(\overline{\Omega}, \tau|\overline{\Omega})$
 for each $f\in A(\overline{\Omega}) $. This implies that
$ A^{q}(\overline{\Omega},\mu_{n})=
 A^{q}(\overline{\Omega},\tau|F_{n} ) \subset A^{q}(\overline{\Omega},\tau).
$ Let $a \in U_{n}$.  Then there exists $c>0$ and an open open disk
 $D_{a} \subset U_{n}$ such that for all
$f\in A(\overline \Omega)$
\[|f(a)| \leq c\{\int |f|^{q} d \tau \}^{\frac{1}{q}}, \hspace{.05in}
a \in D_{a}.  \] Let $g \in A(\overline \Omega)$.
Then there is a sequence $\{q_{i}\}_{i=1}^{\infty}$ in $A(\overline
\Omega)$ so that $q_{i}\rightarrow g\chi_{F_{n}}$ in $L^{q}(\tau)$.
 Then $q_{i}\rightarrow g$ uniformly on $D_{a}$. Hence, it follows that
for  $a\in D_{a}$
\[|g(a)| =
\lim|q_{i}(a)| \leq c  \lim \{\int|q_{i}|^{q} d\tau\}^{\frac{1}{q}}=
c\{\int g\chi_{F_{n}} d\tau \}^{\frac{1}{q}} = \hspace{.05in} c\{
\int |g|^{q} \hspace{.05in}d\mu_{n} \}^{\frac{1}{q}}.
\]
Thus $ a\in \nabla A^{q}(\overline{\Omega},\mu_{n})$. Therefore,
$U_{n}\subset \nabla A^{q}(\overline{\Omega},\mu_{n})$. By
Lemma~\ref{l:key2}, $\nabla A^{q}(\overline \Omega,\mu_{n}) \subset
\nabla A^{q}(K,\tau)$, so we see (notice that $U_{n}$ is a component
of $\nabla A^{q}(K,\tau)$)
 that $ U_{n} = \nabla A^{q}(\overline \Omega,\mu_{n}). $

The hypothesis and Lemma~\ref{l:finite} together imply that  $U_{n}$
is a finitely connected domain. It is also easy to see that
$A^{q}(\overline \Omega,\mu_{n})$ is irreducible. Applying
Proposition~\ref{p:nont}, we have $A(U_{n}) \subset A^{q}(\overline
\Omega, \mu_{n})$. Consequently, $A^{q}(\overline U_{n},\mu_{n})
\subset A^{q}(\overline \Omega, \mu_{n})$. Therefore, we conclude
that $A^{q}(\overline \Omega, \mu_{n})= A^{q}(\overline
U_{n},\mu_{n}) $.  Hence, $U_{n} = \nabla A^{q}(K_{n},\mu_{n}).$
This proves the claim.

Since each  $A^{q}(K_{n},\mu_{n})$  is contained in $A^{q}(K,\mu) $
and since $\{A^{q}(K_{n},\mu_{n})\}$ are pairwise othrogonal, we
have
\[ A^{q}(K,\mu) \supset L^{q}(\mu_{0}) \oplus
A^{q}(K_{1}, \mu | \Delta_{1}) \oplus ... \oplus
 A^{q}(K_{n}, \mu | \Delta_{n}) \oplus ...  .
\]
For the other direction of the equality, let $f$ be the pointwise
limit of $\{ \sum_{i=1}^{n} f_{i} \}_{n=1}^{\infty}$. Then the
bounded convergence theorem
implies that $f\in A^{q}(K,\tau)$. We show that $1 - f = 0$ a.e.
$[\tau]$. Otherwise, there exists a Borel subset $E$ of the support
of $\tau$ such that $0 \neq \chi_{E} =  1 -  f$. Since both $1$ and
$f$ are in $A^{q}(K,\tau)$, we have that $\chi_{E} \in
A^{q}(K,\tau)$. By the purity, we have $L^{q}(\tau|E) \neq
A^{q}(K,\tau|E)$. So it follows by Theorem~\ref{t:approx} that
$\nabla A^{q}(K,\tau|E) \neq \emptyset$. But $\chi_{E} f_{n} = 0$
for each $n \geq 1$, thus we have $ \nabla A^{q}(K,\tau|E) \cap
(\cup U_{n}) = \emptyset. $ But by the definition of $abpe$s,
$\nabla A^{q}(K,\tau|E) \subset \nabla A^{q}(K,\mu) = \cup U_{n}. $
This is a  contradiction, and hence $f - 1 =0$. Therefore, $\{
\Delta_{n}\}$ is a Borel partition. Let $g\in A^{q}(K,\tau)$.
 By the Lebesgue dominated convergence theorem, we conclude
  that $g= \lim_{n\rightarrow \infty}
\sum_{i=1}^{n} f_{i} g$ in $ L^{q}(\tau)$. Therefore,
\[A^{q}(K,\mu) \subset L^{q}(\mu_{0})
 \oplus A^{q}(K_{1}, \mu | \Delta_{1}) \oplus ... \oplus
 A^{q}(K_{n}, \mu | \Delta_{n}) \oplus ...  .
\]
Consequently,
\[A^{q}(K,\mu) = L^{q}(\mu_{0})
 \oplus A^{q}(K_{1}, \mu | \Delta_{1}) \oplus ... \oplus
 A^{q}(K_{n}, \mu | \Delta_{n}) \oplus ...  .
\]

Now we  prove the rest of Theorem~\ref{t:main}: For 1), since  we
have already proved $\overline U_{n} \subset \Delta_{n}$ above, we
only need to show that $A^{q}(K,\mu_{n}) =A^{q}(\overline
U_{n},\mu_{n})$. Because $A(K)\subset A(\overline U_{n})$, it
follows by the definition of $abpe$ that
\[U_{n} \subset \nabla A^{q}(\overline U_{n},\mu_{n}) \subset
 \nabla A^{q}(K,\mu_{n}).
\]
Notice that  $A^{q}(K,\mu_{n})\subset A^{q}(\overline
U_{n},\mu_{n})$ and the latter is irreducible. So we see that
$A^{q}(K,\mu_{n})$ is irreducible also. This implies that $\nabla
A^{q}(K,\mu_{n})$ have only one component. Also, it is clear that
$\nabla A^{q}(K,\mu_{n}) \subset \nabla A^{q}(K,\tau)$. So we
conclude that $\nabla A^{q}(K,\mu_{n}) =U_{n}$. Thus, there is
$h_{n}\in A^{q}(K,\mu_{n})$ such that $\hat{h}_{n} = \chi_{U_{n}}$
and $h_{n} =0 $ off $\overline  U_{n}$. By the uniqueness, we get
that  $h_{n} = f_{n} = \chi_{\Delta_{n}}$.
 As we proved above,
$f = \sum_{n=1}^{\infty} fh_{n}$ for each $f\in A^{q}(K,\tau)$.
Hence, we conclude that
 \begin{align*}
 A^{q}(K,\tau) & \subset  A^{q}(K, \mu_{1}) \oplus ...
\oplus A^{q}(K, \mu_{n}) \oplus ...
  \\ &  \subset A^{q}(\overline U_{1}, \mu_{1}) \oplus ... \oplus
 A^{q}(\overline U_{n}, \mu_{n}) \oplus ...
\\ & = A^{q}(K,\tau).
\end{align*}
Consequently, $A^{q}(K,\mu_{n}) = A^{q}(\overline U_{n},\mu_{n})$
for each $n \geq 1$.

2) follows from Lemma~\ref{l:finite}.

For 3),  let $e$ be the map, $f \rightarrow \hat{f}$,
 from $L^{\infty}(\mu_{n})
\cap A^{q}(K_{n},\mu_{n})$ into $H^{\infty}(U_{n})$. Then $e$ is
surjective by Lemma~\ref{l:scheme} and is injective by
Lemma~\ref{l:unique}.
 Since $e(fg) = e(f) e(g)$, $e$ is
an algebraic isomorphism between two commutative Banach algebras
 and thus $e$ is an isometry.

Next we need to show that  $e$ is a weak-star homeomorphism. To do
this, we will argue as in \cite{ce}. Using {\em Krein-Smulian
theorem} it suffices to show that $e$ is weak-star sequentially
continuous.

Recall that  a sequence of functions in $H^{\infty}(U_{n})$ is
weak-star Cauchy sequence if and only if it is uniformly bounded on
$U_{n}$ and it is a Cauchy sequence in the topology of pointwise
convergence.  Let  $\{h_{i}\}$  be a sequence in
$A^{q}(K_{n},\mu_{n})\cap L^{\infty}(\mu_{n})$ that converges to
zero in the weak star topology. By the  uniform boundedness,
$\{h_{i}\}$ is bounded and hence $\{e(h_{i})\}$ is also bounded.
 Let $a\in U_{n}$ and let $k_{a}$ be kernel
function. Then
\[ \lim_{i\rightarrow  \infty} e(h_{i})
= \lim_{i\rightarrow \infty} \hat{h}_{i}(a)
 = \lim_{i\rightarrow \infty}
 \int h_{i} k_{a} \hspace{.02in} d\mu_{n} =0.
\]
So  $e(h_{i})$ weak-star converges to zero. Therefore,  $e$ is a
weak-star homeomorphism.

4) follows from Proposition~\ref{p:abs} and
Proposition~\ref{p:nont}.

For 5), by Proposition~\ref{p:nont}, each $f\in H^{\infty}(U_{n})$
has nontangential limits almost everywhere on $\partial U_{n}$ with
respect to $\mu|\partial U_{n}$ and the nontangential limits are
equal to $e^{-1}(f)$ {\em a.e.} $[\mu|\partial U_{n}]$. Since
$\hat{f}_{e} = f = f^{*} $ {\em a.e.} $[\mu]$ on $U_{n}$, we see
that $f^{*}= e^{-1}(f) |\Delta_{n}$ {\em a.e.} $[\mu]$. Evidently
\begin{align*}
   m(e(f)) = m(\hat{f}) & = \left\lbrace
  \begin{array}{c l}
    \hat{f} \hspace{.05in}\mbox{on}\hspace{.05in}U_{n}  &\\
    e^{-1}(\hat{f}) \hspace{.05in}\mbox{on}\hspace{.05in}\partial U_{n}
  \end{array}
\right.
 \\ & = \left\lbrace
  \begin{array}{c l}
    f \hspace{.05in}\mbox{on}\hspace{.05in}U_{n}   &\\
    f \hspace{.05in}\mbox{on}\hspace{.05in}\partial U_{n}
  \end{array}
\right.
\\ & = f,
 \hspace{.2in}\mbox{for each}\hspace{.05in} f\in A^{q}(K_{n},\mu_{n}).
\end{align*}
Therefore, $m$ is the inverse map of $e$. So the proof of
Theorem~\ref{t:main} is complete.

\vspace{.06in}
 \noindent{\bf Remark:} This paper contains the best and
close-up result in an once quite active research area. However, this
paper has not been cited by any other authors but me, while
\cite{jm} has been cited more than 80 times. I re-published this
paper on Arxiv to hope to bring more attentions from future
generation of mathematicians to the results in this paper, which I
believe is great and can stay in history of mathematics.

 \vspace{.05in} \noindent {\bf
Acknowledgement.} During 1994, I gave several talks on the result of
this work in turn in the SouthEast Analysis Meeting at Virginia
Tech, UC-Berkeley, AMS Summer Research Conference at Mt. HolyOak,
Brown University and Wabash Conferences at IUPUI. When $R(K)$ is a
hypodirichlet algebra, A slightly simper version of
Theorem~\ref{t:main} was stated as the last theorem in my work
\cite{qiu1}.

\end{document}